\newtheorem{theorem}{Theorem}[section]
\newtheorem{proposition}[theorem]{Proposition}
\newtheorem{lemma}[theorem]{Lemma}
\theoremstyle{definition}
\newtheorem{remark}[theorem]{Remark}
\newtheorem{assumption}{Assumption}[section]
\def\div{\mathop{\mathrm{div}}\nolimits}
\def\!{\mathop{\mathrm{!}}}
\def\R{\mathbb{ R}}
\def\E{\mathbb{ E}}
\def\U{\mathcal{U}}
\def\bRb{\mathbb{R}}
\def\bNb{\mathbb{N}}
\def\EE{\mathbb{E}}
\newtheorem{thm}{Theorem}[section]
\newtheorem{prop}[theorem]{Proposition}
\newtheorem{lem}[theorem]{Lemma}
\newtheorem{defn}[theorem]{Definition}
\def\H{\mathcal{H}}
\newlength{\boxwidth}
\title{Coupled McKean-Vlasov diffusions: wellposedness, propagation of chaos and invariant measures}
\author{Manh Hong Duong\thanks{School of Mathematics, University of Birmingham, Birmingham B15 2TT, UK (\texttt{hduong@bham.ac.uk})}
\and Julian Tugaut\thanks{Universit\'{e} Jean Monnet, Institut Camille Jordan, 23, rue du
docteur Paul Michelon, CS 82301, 42023 Saint-\'{E}tienne Cedex 2,France (\texttt{tugaut@math.cnrs.fr})}}
\begin{document}
\maketitle
\begin{abstract}
In this paper, we study a two-species model in the form of a coupled system of nonlinear stochastic differential equations (SDEs) that arises from a variety of applications such as aggregation of biological cells and pedestrian movements. The evolution of each process is influenced by four different forces, namely an external force, a self-interacting force, a cross-interacting force and a stochastic noise where the two interactions depend on the laws of the two processes. We also consider a many-particle system and a (nonlinear) partial differential equation (PDE) system that associate to the model. We prove the wellposedness of the SDEs, the propagation of chaos of the particle system, and the existence and (non)-uniqueness of invariant measures of the PDE system.  
\end{abstract}

\section{Introduction}
In this paper, we study a two-species model in the form of a coupled system of nonlinear stochastic differential equations 
\begin{subequations}
\label{eq: SDEs}
\begin{align}
&d X_t=-\nabla V_1(X_t)\,dt-a\nabla F_{11}\ast\mu_t(X_t)\,dt-(1-a)\nabla F_{12}\ast\nu_t(X_t)\,dt+\sigma\, dW_t,
\\&d Y_t=-\nabla V_2(Y_t)\,dt-a\nabla F_{21}\ast\mu_t(Y_t)\,dt-(1-a)\nabla F_{22}\ast\nu_t(Y_t)\,dt+\sigma\, d \widehat{W}_t,
\\ &\mathbb{P}(X_t\in dx)=\mu_t(x)\,dx,~~\mathbb{P}(Y_t\in dx)=\nu_t(x)\,dx.
\end{align}
\end{subequations}
Here $0\leq a\leq 1$ and $\sigma>0$ are given constants; $V_1, V_2$ are two external potentials; $F_{11}, F_{22}$ are self-interacting potentials describing the interactions among individuals of the same
species; $F_{12}, F_{21}$ are cross-interacting potentials representing the interactions between individuals belonging to different species; $\sigma$ is the diffusion intensity;  $(W_t, t\geq 0)$ and $(\widehat{W}_t, t\geq 0)$ are independent Wiener processes and finally $\ast$ denotes the standard convolution operator: for a function $G$ and a measure $\gamma$, the convolution between $G$ and $\gamma$, $G\ast\gamma$, is given by
\[
(G\ast \gamma)(x)=\int G(x-y)\gamma(y)\,dy.
\]
In \eqref{eq: SDEs} the evolution of $X_t$ and $Y_t$ depend on their own laws, $\{\mu_t,t\geq 0\}$ and $\{\nu_t, t\geq 0\}$ respectively, that are unknown. Using It\^{o} formula one can show that  $\{\mu_t,t\geq 0\}$ and $\{\nu_t, t\geq 0\}$ satisfy the following system of nonlinear nonlocal partial differential equations
\begin{subequations}
\label{eq: PDEs}
\begin{align}
&\partial_t\mu_t=\div\Big(\big(\nabla V_1+a(\nabla F_{11}\ast\mu_t)+(1-a)(\nabla F_{12}\ast \nu_t)\big)\mu_t\Big)+\frac{\sigma^2}{2}\Delta\mu_t,\\
&\partial_t\nu_t=\div\Big(\big(\nabla V_2+a(\nabla F_{21}\ast\mu_t)+(1-a)(\nabla F_{22}\ast \nu_t)\big)\nu_t\Big)+\frac{\sigma^2}{2}\Delta\nu_t,\\
&\mu_0(dx)=\mathbb{P}(X_0\in dx),~~\nu_0(dx)=\mathbb{P}(Y_0\in dx).
\end{align}
\end{subequations}
System \eqref{eq: SDEs} naturally generalizes the one-specie McKean-Vlasov dynamics
\begin{equation}
\label{eq: MKV}
d Z_t=-\nabla V(Z_t)\,dt-\nabla F\ast\zeta_t(Z_t)\,dt+\sigma\,d W_t,
\end{equation}
where $\zeta_t$ is the law of $Z_t$ that solves the following (nonlocal nonlinear) PDE
\begin{equation}
\label{eq: MKV PDE}
\partial\zeta_t=\div\Big[(\nabla V+\nabla F\ast \zeta_t)\zeta_t\Big]+\frac{\sigma^2}{2}\Delta \zeta_t.
\end{equation}
Systems of (multi-species, interacting) nonlinear stochastic differential equations and nonlocal nonlinear PDEs of the type (1)-(4) arise in a plethora of applications such as mathematical biology (bacteria chemotaxis \cite{KELLER1971,kurokiba2003,espejo2010, conca_espejo_vilches_2011,kavallaris_ricciardi_zecca_2018}, aggregation of biological cells \cite{Evers2016, Evers2017}), plasma physics and galactic dynamics~\cite{BinneyTremaine2008}, statistical mechanics and granular materials~\cite{carrillo2003,Carrillo2006}, pedestrian movements \cite{COLOMBO2012,Crippa2013}, risk management~\cite{GPY2013} and opinion formation~\cite{GPY2017}. The mathematical analysis of such systems has been getting a lot of attention over the last two decades both in the probability and in the PDE community. In particular, the McKean-Vlasov dynamics has been investigated from various aspects. Existence and uniqueness of solutions of \eqref{eq: MKV} under fairly general assumptions on the external potential $V$ and interacting potential $F$ has been proved \cite{McKean1966, Funaki1984, Sznitman1991, Meleard1996, Herrmann2008, Benachour98a,Cattiaux2008}. The propagation of chaos, which was introduced by Kac \cite{kac1956} and further developed by Sznitman \cite{Sznitman1991}, for the McKean-Vlasov was also proved~\cite{Benachour98a, Malrieu2003,Cattiaux2008}. That is, as $n$ gets large, the $n$ interacting processes
\begin{equation}
\label{eq: MKV particles}
dZ^i_t=-\nabla V(Z^i_t)\,dt-\frac{1}{n}\sum_{j=1}^n \nabla F(Z^i_t-Z^j_t)\,dt+\sigma dW^i_t,\quad i=1,\ldots, n,
\end{equation}
behave more and more like the $n$ independent processes
$$
d Z^i_t=-\nabla V(Z^i_t)\,dt-\nabla F\ast\zeta_t(Z^i_t)\,dt+\sigma\,d W^i_t, \quad i=1,\ldots, n,
$$
where $(W^i_t)_{t\geq 0}$ are independent Wiener processes and each particle’s distribution
tends to $\zeta_t(dx)=\zeta_t(x)dx$ where $\zeta_t$ solves \eqref{eq: MKV PDE}. In addition, the empirical measure $\rho^n_t:=\frac{1}{n}\sum_{j=1}^n\delta_{Z^i_t}$ converges in law, on the space $C([0,T],\R)$, to $\zeta_t(dx)$. Thus both \eqref{eq: MKV} and \eqref{eq: MKV PDE} can be numerically approximated by simulating the particle system \eqref{eq: MKV particles} for large $n$. We also refer the reader to~\cite{BolleyGuillinMalrieu2010, Duong2015NA, Jabin2016, Monmarche2017} for similar results for the Vlasov-Fokker-Planck equation, to \cite{Mischler2013,Hauray2014,Mischler2015} for analytical approach to propagation of chaos and to recent papers and surveys \cite{Jabin2017,Jabin2017TMP,DurmusEberleGuillinZimmer2018} for further discussions on this interesting topic. Another important aspect of the McKean-Vlasov dynamics, namely the existence and (non)uniqueness of invariant measures and convergence to an invariant measure, also was studied by many authors using different techniques~ \cite{Benachour98b,carrillo2003,Malrieu2003,Carrillo2006, Cattiaux2008,Bolley2013}. One interesting question that still largely remains open in this direction is to characterise the relative basins of attraction of the equilibria of the McKean-Vlasov equation when there are multiple invariant measures \cite{Dawson1983, Shiino1987, Tugaut2014}.

In contrast to the McKean-Vlasov equation, the coupled McKean-Vlasov dynamics is less understood although some initial attempts have been made. Herrmann \cite{Herrmann02} obtained results for three aforementioned issues for a special case of \eqref{eq: SDEs} where $V_1=V_2=0, F_{11}=F_{22}$ and $F_{21}=F_{12}$, see also~\cite{DuongMunteanRichardson2017} for some formal computations regarding the hydrodynamics limit for this case. Another special case, where $V_1=V_2=0$ and $\sigma=0$, has been studied by several authors: \cite{DiFrancesco2013} established a systematic existence and uniqueness theory of weak measure solutions for system \eqref{eq: PDEs} while  its equilibrium properties were investigated in \cite{Evers2017,Francesco2016}. More recently, \cite{CARLIER2016,Laborde2017} proved, using a discrete variational approximation scheme \`{a} la Jordan-Kinderlehrer-Otto, existence and uniqueness results for a class of parabolic systems with nonlinear diffusion and nonlocal interaction that includes the PDE system \eqref{eq: PDEs}. We also refer the reader to recent works \cite{LEPOUTRE2017, Chen2017,Chen2018,DIFRANCESCO2018,Carrillo2018} on similar multi-species systems where a (nonlinear) cross-diffusion is also included.

The aim of the present paper is to study the well-posedness, propagation of chaos phenomenon and the existence of (multiple) invariant measures of the coupled McKean-Vlasov system~\eqref{eq: SDEs}-\eqref{eq: PDEs}. We generalize some of the aforementioned results for special cases to the full system and obtain new results.

\textit{Well-posedness of \eqref{eq: SDEs}}. Proving the existence and uniqueness of solutions of interacting (multi-species) systems such as \eqref{eq: SDEs} is highly nontrivial because of its nonlocality and nonlinearity. When both the confinining and interaction potentials are globally Lipschitz, the well-posedness of \eqref{eq: SDEs} can be established using the by now standard techniques \cite{McKean1966, Funaki1984, Sznitman1991, Meleard1996}.  When either of the potentials is non-Lipscitz, it is a more intricate problem. The following theorem, which is our first result, generalizes similar results of~\cite{Benachour98a,Herrmann2008,PhDthesisTugaut2010} for the McKean-Vlasov equation and of \cite{Herrmann02} for the special case of \eqref{eq: SDEs} (where $V_1=V_2=0, F_{11}=F_{22}$ and $F_{21}=F_{12}$ as mentioned in a previous paragraph) to the general coupled system \eqref{eq: SDEs}.
\begin{thm}
\label{sandra}
Suppose that Assumption \ref{asp: assumption} holds and that $X_0$ and $Y_0$ are such that $\E(|X_0|^{8q^2})<\infty$ and $\E(|Y_0|^{8q^2})<\infty$ where $q>0$ is defined in (H7) of Assumption \ref{asp: assumption}. The system \eqref{eq: SDEs} admits a unique strong solution on $\bRb_+$. In other words, given a probability space with two Brownian motions, there exists a solution to the system with these Brownian motions.
\end{thm}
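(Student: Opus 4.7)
The plan is to adapt the classical Sznitman--M\'el\'eard fixed-point approach to the two-species setting, handling the non-Lipschitz coefficients by a combination of localization and high-order moment estimates. Fix a horizon $T>0$ and introduce the space $\mathcal{M}_T$ of pairs of continuous flows $t\mapsto(m^1_t,m^2_t)$ in $\P_2(\bRb^d)$ whose polynomial moments up to order $8q^2$ are uniformly bounded on $[0,T]$. For any such input, the drifts
\[
b^1_t(x)=-\nabla V_1(x)-a(\nabla F_{11}\ast m^1_t)(x)-(1-a)(\nabla F_{12}\ast m^2_t)(x),
\]
and the analogous $b^2_t(y)$ depend only on $(t,x)$ and are, under Assumption \ref{asp: assumption}, locally Lipschitz with polynomial growth. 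The decoupled SDEs driven by $W$ and $\widehat W$ therefore admit unique strong solutions up to an explosion time, so I can define the map $\Phi(m^1,m^2):=(\mathrm{Law}(X_\cdot),\mathrm{Law}(Y_\cdot))$ and search for a fixed point.

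The first technical step is to prove that $\Phi$ stabilizes $\mathcal{M}_T$. Applying It\^o's formula to $|X_t|^{8q^2}$ and exploiting the polynomial-growth assumption (H7) together with Young's inequality to absorb the cross terms $(\nabla F_{ij}\ast m^j_t)(X_t)\cdot X_t$ into moment integrals of the input, one obtains a Gronwall-type bound for $\E|X_t|^{8q^2}$ (and similarly for $Y_t$) in terms of the initial data. This is precisely where the hypothesis $\E|X_0|^{8q^2},\E|Y_0|^{8q^2}<\infty$ is consumed, and these uniform moments in turn prevent explosion of $X$ and $Y$.

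Next, coupling two runs of $\Phi$ associated with inputs $(m^1,m^2)$ and $(\widetilde m^1,\widetilde m^2)$ through the same Brownian motions, a local Lipschitz estimate of the form
\[
|(\nabla F_{ij}\ast m_t)(x)-(\nabla F_{ij}\ast\widetilde m_t)(x)|\le L(|x|,M_q)\,W_2(m_t,\widetilde m_t),
\]
combined with It\^o's isometry, yields
\[
\E\sup_{s\le t}|X_s-\widetilde X_s|^2+\E\sup_{s\le t}|Y_s-\widetilde Y_s|^2\le C\int_0^t\bigl(W_2^2(m^1_s,\widetilde m^1_s)+W_2^2(m^2_s,\widetilde m^2_s)\bigr)\,ds.
\]
For a sufficiently small $T_0>0$, Banach's theorem then provides a unique fixed point in the path-space Wasserstein metric, hence a unique strong solution on $[0,T_0]$. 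Since the moment bound produced in the previous step does not degenerate with the starting point, the local construction can be iterated on $[kT_0,(k+1)T_0]$ and concatenated to produce the unique strong solution on $\bRb_+$.

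The hard part will be the moment bookkeeping: because $\nabla V_i$ and $\nabla F_{ij}$ are only polynomially bounded of order $q$, the effective Lipschitz constant $L(|x|,M_q)$ entering the contraction depends on the state and on moments of both measures, so closing the Gronwall loop in the contraction step requires simultaneously propagating moments of a very high order. Tracking each application of Cauchy--Schwarz and Young, the minimal exponent that makes every chain of estimates close turns out to be exactly $8q^2$, which explains the assumption on the initial data. Once this tower of moment bounds is in place, the remainder of the argument follows the standard McKean--Vlasov template.
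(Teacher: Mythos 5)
Your route is genuinely different from the paper's: you set up a fixed point on the space of measure flows $(m^1_t,m^2_t)$ equipped with a path-space Wasserstein metric, whereas the paper fixes the \emph{drift functions} $b=(b_1,b_2,b_3,b_4)$ and contracts the map $\Gamma$ that sends $b$ to the new convolution drifts, using the weighted sup-norm $\|b\|_T=\sup_{s,x}|b(s,x)|/(1+|x|^{2q})$. The paper's choice avoids Wasserstein bookkeeping entirely; polynomial growth of $\nabla F_{ii}$ is absorbed into the weight $(1+|x|^{2q})$, and the Lipschitz estimates for $\Gamma$ reduce to moment estimates on $X^b$ and $Y^b$ (Lemmas \ref{oumaima}--\ref{vin2}). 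Your approach is workable in principle, but the $W_2$-Lipschitz estimate
$|(\nabla F_{ii}\ast m_t)(x)-(\nabla F_{ii}\ast\widetilde m_t)(x)|\le L(|x|,M_q)W_2(m_t,\widetilde m_t)$
requires a local Lipschitz constant of order $(1+|x|^{2q-2}+\textrm{moments})$, and the resulting contraction constant depends on moments of the \emph{inputs}, so one must restrict to a stable ball as the paper does with $F_T^K$. You acknowledge this but do not show how to close it, and the assertion that the minimal exponent ``turns out to be exactly $8q^2$'' is stated rather than derived; in the paper the exponent arises concretely from the iteration $\widehat{\eta^{\,b}_{2n}}\lesssim\widehat{\eta^{\,\rho}_{4qn}}$ applied at $2n=4q$.

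The more serious gap is the global extension. You invoke a ``Gronwall-type bound'' for $\E|X_t|^{8q^2}$ and then claim the local construction can simply be iterated on $[kT_0,(k+1)T_0]$ because the bound ``does not degenerate with the starting point.'' This is not justified: a Gronwall estimate for the decoupled SDE driven by an arbitrary input flow generally yields exponential-in-time growth, so the moments at time $kT_0$ (and therefore the Lipschitz constants and the local existence time) can deteriorate along the iteration, potentially producing only a solution on a bounded interval. The paper explicitly confronts this in Lemma~\ref{pomme}: it establishes a \emph{uniform-in-time} a priori bound on the moments of the true (self-consistent) solution by combining the confinement assumptions (H4)--(H5), the sign of the self-interaction term $\E[\mathrm{sign}(X_t)|X_t|^{2k-1}F_{ii}'\ast\mu_t(X_t)]\ge 0$ coming from convexity of $F_{ii}$, and a comparison argument in a two-dimensional ODE system for $(\E|X_t|^{2k},\E|Y_t|^{2k})$. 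Without a comparable structural estimate your extension argument does not close, so this step of your proposal needs to be filled in.
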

\textit{Propagation of chaos.}  To describe our result on propagation of chaos for the coupled McKean-Vlasov system, we take two sequences of integers, $(M_n)_{n\in\mathbb{N}}$ and $(N_n)_{n\in\mathbb{N}}$, that go to infinity as $n$ tends to infinity and consider the following system of interacting particles
\begin{subequations}
\label{eq: many SDEs}
\begin{align}
dX_t^{i}&=-\nabla V_1(X^{i}_t)\,dt-\frac{1}{N_n+M_n}\sum_{j=1}^{N_n}\nabla F_{11}(X^{i}_t-X^{j}_t)\,dt\notag
\\&\qquad-\frac{1}{N_n+M_n}\sum_{k=1}^{M_n}\nabla F_{12}(X^{i}_t-Y^{k}_t)\,dt+\sigma dW^i_t;~~i=1,\ldots, N_n;
\\dY^{i}_t&=-\nabla V_2(Y^{i}_t)\,dt-\frac{1}{N_n+M_n}\sum_{j=1}^{N_n}\nabla F_{21}(Y^{i}_t-X^{j}_t)\,dt\notag
\\&\qquad-\frac{1}{N_n+M_n}\sum_{k=1}^{M_n}\nabla F_{22}(Y^{i}_t-Y^{k}_t)\,dt+\sigma d\widetilde W^i_t,~~i=1,\ldots, M_n.
\end{align}
\end{subequations}
Note that the convolution operators $\nabla F_{ij}\ast \gamma$ ($i,j\in\{1,2\}, \gamma\in\{\mu,\nu\})$ in \eqref{eq: SDEs} are replaced by the average sums in \eqref{eq: many SDEs}. These sums can also be viewed as convolutions between $\nabla F_{ij}$ with the empirical measures, $\mu^n_t$ and $\nu^n_t$, instead of the laws $\mu_t$ and $\nu_t$ where
\[
\mu^{n}_t:=\frac{1}{M_n+N_n}\sum_{j=1}^{N_n}\delta_{X^j_t}\quad \text{and}\quad\nu^{n}_t:=\frac{1}{M_n+N_n}\sum_{k=1}^{M_n}\delta_{Y^k_t}.
\]
We will show that the propagation of chaos phenomenon holds for the system \eqref{eq: many SDEs}, that is, for all $(p,q)\in \mathbb{N}^2$, $(X^1_t,\ldots, X^p_t,Y^1_t,\ldots, Y^q_t)$ converges as $n$ tends to infinity to $\otimes_{i=1}^p\mu_t\otimes_{j=1}^q \nu_t$, where $\mu_t, \nu_t$ are respectively the laws of $X_t$ and $Y_t$ that are solutions of \eqref{eq: SDEs}. This result is the consequence of the following theorem, that is our second result and extends \cite{Herrmann02} to the general case,
\begin{thm}
\label{theo: PoC}
Under the same assumption as in Theorem \ref{sandra}, for $T<\infty$, we have
\begin{equation}
\label{eq: E-sup}
\lim\limits_{n\to\infty}\E\Big[\sup\limits_{t\in[0,T]}\big(X_t^{i}-\widehat{X_t^{i}}\big)^2\Big]=0\quad\text{and}\quad \lim\limits_{n\to\infty}\E\Big[\sup\limits_{t\in[0,T]}\big(Y_t^{i}-\widehat{Y_t^{i}}\big)^2\Big]=0,
\end{equation}
where $(\widehat{X_t^{i}}, \widehat{Y_t^{i}})$ is a solution to the following system
\begin{subequations}
\label{eq: MV-system}
\begin{align}
d\widehat{X_t^{i}}&=-\nabla V_1(\widehat{X^{i}_t})\,dt-a (\nabla F_{11}\ast\mu_t)(\widehat{X_t^{i}})\,dt\notag
\\&\qquad\qquad-(1-a)(\nabla F_{12}\ast\nu_t)(\widehat{X_t^{i}})\,dt+\sigma dW^i_t, \quad i=1,\ldots, N_n;
\\ d\widehat{Y^i_t}&=-\nabla V_2(\widehat{Y_t^{i}})\,dt-a(\nabla F_{21}\ast\mu_t)(\widehat{Y_t^{i}})\,dt\notag
\\&\quad\qquad-(1-a)(\nabla F_{22}\ast\nu_t)(\widehat{Y_t^{i}})\,dt+\sigma d\widetilde W^i_t,
\quad i=1,\ldots, M_n,
\end{align}
with $(W^i_t,\widetilde W^i_t)$ being independent Wiener processes. Note that $\{\widehat{X_t}^i\}_{i=1}^{N_n}$ ($\{Y_t^k\}_{k=1}^{M_n}$ resp.) are identically independent copies of $X_t$ ($Y_k$ resp.).
\end{subequations}
\end{thm}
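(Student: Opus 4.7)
My plan is to implement Sznitman's synchronous coupling argument adapted to the two-species setting. Because the limit system \eqref{eq: MV-system} is driven by the \emph{same} Brownian motions $(W^i,\widetilde W^i)$ as the particle system \eqref{eq: many SDEs}, the stochastic integrals cancel in the differences
\[
\xi^i_t := X^i_t - \widehat{X_t^i}, \qquad \eta^i_t := Y^i_t - \widehat{Y_t^i},
\]
so each is a pathwise integral of drift differences. This lets me use the elementary estimate $\sup_{t\leq T}\big|\int_0^t f(s)\,ds\big|^2 \leq T\int_0^T |f(s)|^2\,ds$ to push the supremum inside the expectation without any Burkholder--Davis--Gundy input.

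The core of the argument is to decompose each interaction error. For the self-interaction within the $X$-species, I add and subtract two intermediate terms:
\begin{align*}
& \frac{1}{N_n+M_n}\sum_{j=1}^{N_n}\nabla F_{11}(X^{i}_s - X^{j}_s) - a\,\nabla F_{11}\ast\mu_s(\widehat{X_s^{i}}) \\
&\qquad = \frac{1}{N_n+M_n}\sum_{j=1}^{N_n}\bigl[\nabla F_{11}(X^{i}_s - X^{j}_s) - \nabla F_{11}(\widehat{X_s^{i}} - \widehat{X_s^{j}})\bigr] \\
&\qquad\quad + \Bigl\{\frac{1}{N_n+M_n}\sum_{j=1}^{N_n}\nabla F_{11}(\widehat{X_s^{i}} - \widehat{X_s^{j}}) - \frac{N_n}{N_n+M_n}\,\nabla F_{11}\ast\mu_s(\widehat{X_s^{i}})\Bigr\} \\
&\qquad\quad + \Bigl(\frac{N_n}{N_n+M_n} - a\Bigr)\nabla F_{11}\ast\mu_s(\widehat{X_s^{i}}).
\end{align*}
The first (coupling) term is controlled by $|\xi^i_s| + \frac{1}{N_n+M_n}\sum_j|\xi^j_s|$ via the Lipschitz hypothesis in Assumption \ref{asp: assumption}; the second (fluctuation) term is, conditional on $\widehat{X_s^{i}}$, a centred sum of i.i.d.\ random variables whose $L^2$-norm is $O(1/\sqrt{N_n})$ thanks to the moment bounds from Theorem \ref{sandra}; and the third (ratio) term vanishes under the natural proportionality assumption $N_n/(N_n+M_n)\to a$ implicit in the setup. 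The cross-interaction terms and the $Y$-equations are handled by the same three-term decomposition.

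Combining these estimates, exploiting exchangeability so that $\E[\sup_{t\leq T}|\xi^i_t|^2]$ does not depend on $i$ (and similarly for $\eta^i$), and setting
\[
f(T) := \E\Bigl[\sup_{t\leq T}|\xi^1_t|^2\Bigr] + \E\Bigl[\sup_{t\leq T}|\eta^1_t|^2\Bigr],
\]
the decomposition yields a Gronwall-type inequality $f(T) \leq C_T\int_0^T f(s)\,ds + \varepsilon_n$, where $\varepsilon_n\to 0$ collects the fluctuation and ratio errors. Integrating gives $f(T)\leq \varepsilon_n\,e^{C_T T}\to 0$, which is precisely \eqref{eq: E-sup}.

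The main obstacle is the non-Lipschitz regime highlighted in Assumption \ref{asp: assumption}, in which the interaction potentials are only locally Lipschitz with polynomial growth of order $q$. The coupling step in the decomposition then produces factors of the form $(1+|X^i_s|^q+|X^j_s|^q+|\widehat{X_s^i}|^q+|\widehat{X_s^j}|^q)(|\xi^i_s|+|\xi^j_s|)$, which I would handle by Cauchy--Schwarz together with uniform-in-$n$, uniform-in-$i$ moment bounds on the particle system of order up to $8q^2$. These bounds are furnished for the limit system by Theorem \ref{sandra} and the hypothesis $\E|X_0|^{8q^2}<\infty$; for the particle system they must be established by an independent It\^o-moment computation exploiting the dissipative structure of $V_1,V_2$ together with the cancellation $\sum_{i,j}(\nabla F_{11}(X^i-X^j))\cdot(X^i-X^j)$ type symmetrisation. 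Controlling these moments uniformly in $n$ is where the bulk of the technical work will lie.
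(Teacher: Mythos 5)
Your overall skeleton --- synchronous coupling so the stochastic integrals cancel, the three-term decomposition of the interaction error into coupling, fluctuation, and ratio parts, and a final Gr\"onwall step --- is the same as the paper's, and the fluctuation/ratio estimates you describe (centred $L^2$-estimate of order $N_n^{-1/2}$ via the moment bounds, and $N_n/(N_n+M_n)\to a$) are exactly those in Proposition~\ref{prop: 2nd}. There is, however, a genuine gap in how you intend to treat the coupling term for the self-interaction, and a related structural choice that would make that gap unfixable along the route you describe.

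The self-interactions $\nabla F_{11}, \nabla F_{22}$ are only of polynomial growth (Assumption~\ref{asp: assumption}~(H7)), so the coupling increment $\nabla F_{11}(X^{i}_s-X^{j}_s)-\nabla F_{11}(\widehat X^{i}_s-\widehat X^{j}_s)$ is \emph{not} bounded by $C(|\xi^i_s|+|\xi^j_s|)$ but by $(1+|X^i_s|^{2q-2}+\cdots)(|\xi^i_s|+|\xi^j_s|)$, and attempting to close the loop by Cauchy--Schwarz and moment bounds forces you into $\E[|\xi|^4]$, $\E[|\xi|^8]$, \dots, a chicken-and-egg escalation with no linear Gr\"onwall inequality at the end. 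The paper avoids this entirely: (H7) states that $\nabla F_{11}$ and $\nabla F_{22}$ are \emph{odd and increasing}, and in the proof of Proposition~\ref{prop: 2nd} this monotonicity is used, after symmetrising the double sum, to show that
\[
\sum_{i,j}\Bigl(\nabla F_{11}(X^{i}_t-X^{j}_t)-\nabla F_{11}(\widehat X^{i}_t-\widehat X^{j}_t)\Bigr)\cdot\Bigl((X^{i}_t-\widehat X^{i}_t)-(X^{j}_t-\widehat X^{j}_t)\Bigr)\geq 0 ,
\]
so the self-interaction coupling term is simply non-positive in the It\^o inequality and can be \emph{discarded}, with no moment estimate required. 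This is the single most important structural observation in the propagation-of-chaos proof here, and it is absent from your plan.

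Compounding this, your step ``$\sup_{t\le T}|\int_0^t f\,ds|^2\le T\int_0^T|f|^2\,ds$'' squares the drift difference \emph{before} pairing it with $\xi^i$; that destroys the sign structure, so the monotonicity trick above cannot be applied even if you were to notice it. The paper instead applies It\^o to $|\xi^i_t|^2$ (and $|\xi^i_t|^4$ in Proposition~\ref{prop: 4th}), keeping the inner product $\text{(drift diff)}\cdot\xi^i$, which is precisely what allows the symmetrised sum to have a sign. Only at the very last stage (the proof of Theorem~\ref{theo: PoC} from the two propositions), where one needs the sup inside the expectation, does the paper use the pathwise nature of $\xi^i$, and even there the remaining $L^1$-term $\E[L]$ is estimated through the already-proved $L^2$ and $L^4$ bounds on $\xi^i,\eta^i$ (Propositions~\ref{prop: 2nd}, \ref{prop: 4th}) and the a priori moment bounds of Proposition~\ref{arthur}, not through a fresh Cauchy--Schwarz on the coupling term. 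You also assert a linear Gr\"onwall inequality $f(T)\le C_T\int_0^T f+\varepsilon_n$, but the fluctuation contribution enters as $f(n)\sqrt{\omega(s)+\widehat\omega(s)}$ inside the integral, so a \emph{nonlinear} Gr\"onwall inequality (Lemma~\ref{lem: Gronwall} with $\alpha=\tfrac12$, then $\alpha=\tfrac34$) is what the paper actually uses. In short: same scaffolding, but you are missing the monotonicity-cancellation idea, and without it, plus with the squaring choice you made, the coupling term cannot be closed under (H7).
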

%\textcolor{red}{Note that for the existence and PoC, the force fields do not need to be conservative.}

\textit{Existence and non-uniqueness of invariant measures in non-convex landscapes.} It is by now well-known that when the confining potential $V$ is not convex the McKean-Vlasov equation exhibits a phase transition phenomenon, that is it may have a unique stationary solution or several ones when the diffusion coefficient (i.e., the temperature) is above or below a critical value \cite{Dawson1983,Tamura1984, Shiino1987, Tugaut14, BCCD16}.  Similar results of nonuniqueness of the stationary state at low temperatures have been also obtained for McKean-Vlasov equations modeling opinion formation~\cite{Chazelle_al2017, ChayesPanferov2010}, for the Desai-Zwanzig model in a two-scale potential~\cite{GomesPavliotis2018} as well as for the McKean-Vlasov equations on the torus \cite{ChayesPanferov2010,CGPS2018tmp}. Our third result is the following existence and non-uniqueness of invariant measures. This is significantly different from \cite{Herrmann02} where there is a unique invariant measure.
\begin{thm}
\label{theo: invariant}
Suppose that $F_{ij}(x)=\frac{\alpha_{ij} x^2}{2}$ for $i,j\in\{1,2\}$ and that $V_1$ and $V_2$ have a common unique minimizer $m^\ast$. Then for any $\rho$ such that 
\begin{equation*}
\rho\geq \max\Big\{\frac{|V_1^{(3)}(m^\ast)|}{4V_1''(m^\ast)(V_1''(m^\ast)+a\alpha_{11}+(1-a)\alpha_{12})},\frac{|V_2^{(3)}(m^\ast)|}{4V_2''(m^\ast)(V_2''(m^\ast)+a\alpha_{21}+(1-a)\alpha_{22})} \Big\}.
\end{equation*} 
the system \eqref{eq: PDEs} have an invariant measure $(\mu,\nu)$ whose mean values belong to $[m^\ast-\rho\sigma^2,m^\ast+\rho\sigma^2]\times[m^\ast-\rho\sigma^2,m^\ast+\rho\sigma^2]$. In addition, if $V_1$ and $V_2$ are symmetrical, then there is a unique symmetrical invariant measure $(\mu^0,\nu^0)$ whose mean values are zeros.
\end{thm}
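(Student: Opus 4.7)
The plan is to realise invariant measures as fixed points of a two-dimensional map and then apply Brouwer's theorem on the closed box $B_\rho := [m^\ast - \rho\sigma^2, m^\ast + \rho\sigma^2]^2$. Because $\nabla F_{ij}\ast\gamma(x) = \alpha_{ij}(x - m_\gamma)$ with $m_\gamma := \int y\,\gamma(dy)$, every stationary solution of the PDE system must take the Gibbs form
\begin{equation*}
\mu(x) = \frac{1}{Z_1(m_\mu, m_\nu)}\exp\left(-\frac{2}{\sigma^2}\left[V_1(x) + \frac{a\alpha_{11}}{2}(x-m_\mu)^2 + \frac{(1-a)\alpha_{12}}{2}(x-m_\nu)^2\right]\right),
\end{equation*}
and analogously for $\nu$, subject to the self-consistency $m_\mu = \int x\,\mu(dx)$ and $m_\nu = \int x\,\nu(dx)$. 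Denoting by $\Phi_1(m_\mu, m_\nu)$ and $\Phi_2(m_\mu, m_\nu)$ the means of these parametrised densities, invariant measures correspond bijectively to fixed points of $\Phi := (\Phi_1, \Phi_2)$.

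The core step is to show $\Phi(B_\rho) \subset B_\rho$. Integrating the stationary Fokker--Planck equation against the constant function gives the identity
\begin{equation*}
\mathbb{E}_\mu[V_1'(X)] + a\alpha_{11}(\Phi_1(m) - m_\mu) + (1-a)\alpha_{12}(\Phi_1(m) - m_\nu) = 0.
\end{equation*}
Substituting the Taylor decomposition $V_1'(x) = V_1''(m^\ast)(x - m^\ast) + R_1(x)$ and setting $A_1 := V_1''(m^\ast) + a\alpha_{11} + (1-a)\alpha_{12}$ rearranges this to
\begin{equation*}
A_1(\Phi_1(m) - m^\ast) = a\alpha_{11}(m_\mu - m^\ast) + (1-a)\alpha_{12}(m_\nu - m^\ast) - \mathbb{E}_\mu[R_1(X)].
\end{equation*}
Combining the sharp Taylor estimate $|R_1(x)| \le \frac{1}{2}|V_1^{(3)}(m^\ast)|(x - m^\ast)^2$ with the variance bound $\mathbb{E}_\mu[(X-m^\ast)^2] \le \sigma^2/(2A_1)$ produces $|\mathbb{E}_\mu[R_1(X)]| \le |V_1^{(3)}(m^\ast)|\sigma^2/(4A_1)$, and the triangle inequality then yields
\begin{equation*}
|\Phi_1(m) - m^\ast| \le \frac{a\alpha_{11}+(1-a)\alpha_{12}}{A_1}\rho\sigma^2 + \frac{|V_1^{(3)}(m^\ast)|\sigma^2}{4A_1^2}.
\end{equation*}
Elementary algebra using $a\alpha_{11} + (1-a)\alpha_{12} = A_1 - V_1''(m^\ast)$ shows this is bounded by $\rho\sigma^2$ precisely when $\rho \ge |V_1^{(3)}(m^\ast)|/(4V_1''(m^\ast)A_1)$, which is the first argument of the maximum in the hypothesis; the symmetric computation for $\Phi_2$ delivers the second argument. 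Continuity of $\Phi$ on $B_\rho$ follows from dominated convergence applied to the Gaussian-type tails of the Gibbs densities, and Brouwer's theorem then furnishes an invariant pair $(\mu,\nu)$ with means in $B_\rho$.

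For the uniqueness clause with $m^\ast = 0$ and $V_1, V_2$ even, any symmetric invariant pair $(\mu^0, \nu^0)$ consists of even densities and therefore satisfies $m_\mu = m_\nu = 0$. Substituting these values into the Gibbs formulas produces unique closed-form expressions for $\mu^0$ and $\nu^0$ which are automatically even since $V_1, V_2$ are, so the constraints $\int x\,\mu^0(dx) = \int x\,\nu^0(dx) = 0$ are trivially satisfied; conversely no other symmetric pair can solve them. I expect the principal technical obstacle to be justifying the variance inequality $\mathbb{E}_\mu[(X-m^\ast)^2] \le \sigma^2/(2A_1)$ together with the global Taylor control on $R_1$. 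If Assumption \ref{asp: assumption} yields (semi)convexity of the effective potential one may invoke Brascamp--Lieb directly; otherwise a refined Laplace asymptotic together with Gibbs tail estimates, in the spirit of the one-species arguments of \cite{Herrmann02, Tugaut14}, is required to absorb the higher-order Taylor remainder.
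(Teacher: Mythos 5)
Your overall architecture — write stationary measures in Gibbs form parametrised by the pair of means, define the self-consistency map $\Phi = (\Phi_1,\Phi_2)$, show it stabilises the box $[m^\ast-\rho\sigma^2,m^\ast+\rho\sigma^2]^2$, and close with Brouwer — coincides exactly with the paper's. The identity you obtain by testing the stationary Fokker--Planck equation against $x$ (not ``against the constant function'', as you write; a constant test function gives nothing) is correct and gives the same algebraic reduction as the paper, and your algebra showing that the final bound is $\le \rho\sigma^2$ precisely when $\rho \ge |V_1^{(3)}(m^\ast)|/(4V_1''(m^\ast)A_1)$ matches the paper's inequality for $|k_1|$. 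The symmetric uniqueness argument is also the same as the paper's.

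The genuine gap is in the two estimates on which your self-map argument rests, and you have correctly flagged both of them but not filled them, while they are in fact the entire content of the paper's proof. First, $|R_1(x)| \le \tfrac12|V_1^{(3)}(m^\ast)|(x-m^\ast)^2$ is not a valid global Taylor bound: the Lagrange remainder involves $V_1^{(3)}(\xi)$ at an intermediate point, and nothing in Assumption~\ref{asp: assumption} keeps $|V_1^{(3)}|$ below its value at $m^\ast$ away from $m^\ast$ (indeed for double-well-type $V_1$ it grows). Second, $\mathbb{E}_\mu[(X-m^\ast)^2] \le \sigma^2/(2A_1)$ is not available via Brascamp--Lieb because the confining potential is precisely \emph{not} assumed convex (the whole point of the theorem is multiplicity of invariant measures in non-convex landscapes), and in any case it is a bound on the second moment about $m^\ast$, not the variance, so the shift of the Gibbs mean away from $m^\ast$ would have to be controlled first — which is circular. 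Both estimates do hold \emph{asymptotically} as $\sigma\to 0$, because $\mu$ concentrates near $m^\ast$ at rate $\sigma$, and this is exactly what the paper's Lemma~\ref{lem: HT10} (a Laplace-type expansion from \cite{HerrmannTugaut10}) quantifies: the paper shows $\Phi_i(m^\ast+\rho_1\sigma^2,m^\ast+\rho_2\sigma^2)=m^\ast-k_i\sigma^2+o(\sigma^2)$ with $|k_i|\le\rho$, and concludes the self-map property only \emph{for $\sigma$ small enough}. Your non-asymptotic bounds, if true, would remove the smallness restriction, but as stated they do not hold and the argument does not close without replacing them by the Laplace asymptotics.
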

This implies that if $V_1=V_2=V$ where $V$ is a double-wells landscape, then there are at least three invariant probabilities.\\[4pt]
\textit{Organisation of the paper.} The rest of the paper is organised as follows. In Section \ref{sec: wellpose}, we prove Theorem \ref{sandra} on the wellposedness of \eqref{eq: SDEs}. In Section \ref{sec: PoC} we study the propagation of chaos phenomenon and establish Theorem \ref{theo: PoC}. Finally, in Section \ref{sec: invariant} we prove Theorem \ref{theo: invariant} on the existence and nonuniqueness of invariant measures.
\section{Existence and uniqueness of strong solutions}
\label{sec: wellpose}

In this section, we prove Theorem \ref{sandra} establishing the existence and unique of strong solutions of the system \eqref{eq: SDEs}. We adapt the proof of  \cite{Benachour98a} for the existence and uniqueness of strong solutions of the McKean-Vlasov dynamics \eqref{eq: MKV}, see also \cite{Herrmann02,Herrmann2008,PhDthesisTugaut2010}. To this end, we transform \eqref{eq: SDEs} into a fixed point problem of a map $\Gamma$ on a functional space $\Lambda$, we then show that $\Gamma$ is a contraction map on a subspace $\Lambda_T\subset\Lambda$ 
proving the existence and uniqueness of strong solutions over a finite time interval $[0,T]$. The local solution is then extended to become a global one by controlling its moments.
\begin{assumption}We make the following assumptions.
\label{asp: assumption}
\begin{enumerate}[(H1)]
\item The coefficients $\nabla V_1$, $\nabla V_2$, $\nabla F_{ij}$ are locally Lipschitz for any $i,j\in\{1;2\}$.
\item The functions $V_1$, $V_2$ and $F_{ij}$ are continuously differentiable for any $i,j\in\{1;2\}$.
\item There exist $\theta_1>0$ and $\theta_2>0$ such that
\begin{equation}
\label{eq: V1}
(\nabla V_1(x)-\nabla V_1(y))(x-y)\geq -\theta_1|x-y|^2\quad\text{and}\quad (\nabla V_2(x)-\nabla V_2(y))(x-y)\geq -\theta_2|x-y|^2\quad\forall x,y.
\end{equation}
\item $xV_1'(x)\geq C_4x^4-C_2x^2$ with $C_2,C_4>0$. The same holds with $V_2$.
\item The potentials $V_1$ is convex at infinity: $\displaystyle\lim_{|x|\to+\infty}\nabla^2V_1(x)=+\infty$. The same holds with $V_2$.
\item There exist $m\in\bNb$ and $C>0$ such that $|\nabla V_1(x)|+|\nabla V_2(x)|\leq C|x|^{2m-1}$ and $m\geq2$.
\item $\nabla F_{11}$ and $\nabla F_{22}$ are odd and increasing with polynomial growth functions, the degree being $2q-1$.
\item $\nabla F_{12}$ and $\nabla F_{21}$ are Lipschitz.
\end{enumerate}
\end{assumption}

 We now need to introduce some functional spaces.

\begin{defn}
\label{sandra2}
On the space of functions from $\bRb_+\times\bRb$ to $\bRb$, we introduce the norm
\begin{equation*}
\left|\left|b\right|\right|_T:=\sup_{0\leq s\leq T}\sup_{x\in\bRb}\left(\frac{\left|b(s,x)\right|}{1+|x|^{2q}}\right)\,.
\end{equation*}
\end{defn}

We now introduce the functional space that will be used in the following.

\begin{defn}
\label{sandra3}
We consider the space
\begin{equation*}
\Lambda_T:=\Lambda_T^1\bigcap\Lambda_T^2\bigcap\Lambda_T^3\,,
\end{equation*}
where the three spaces of functions $\Lambda_T^1$, $\Lambda_T^2$ and $\Lambda_T^3$ are defined by
\begin{equation*}
\Lambda_T^1:=\left\{b:[0;T]\times\bRb\longrightarrow\bRb\,\,\left|\right.\,\,x\mapsto b(s,x)\mbox{ is locally Lipschitz uniformly in }s\right\}\,,
\end{equation*}
where the parameter of Lipschitz may depend on $b$;
\begin{equation*}
\Lambda_T^2:=\left\{b:[0;T]\times\bRb\longrightarrow\bRb\,\,\left|\right.\,\,x\mapsto b(s,x)\mbox{ is increasing and }b(s,x)-b(s,y)\geq\xi_1(x-y)+\xi_0\right\}\,,
\end{equation*}
where $\xi_1>0$, $\xi_0\in\bRb$ and $x\geq y$; and
\begin{equation*}
\Lambda_T^3:=\left\{b:[0;T]\times\bRb\longrightarrow\bRb\,\,\left|\right.\,\,\left|\left|b\right|\right|_T<\infty\right\}\,.
\end{equation*}
The space $\Lambda_T$  is equipped with the norm $||.||_T$.
\end{defn}

\begin{defn}
\label{topchef}
We finally put $F_T:=\Lambda_T\times\Lambda_T\times\Lambda_T\times\Lambda_T$ equipped with the norm 
\begin{equation*}
\left|\left|b\right|\right|_T^F:=\sum_{i=1}^4\left|\left|b_i\right|\right|_T\,,
\end{equation*}
where $b:=(b_1,b_2,b_3,b_4)$.
\end{defn}

We will also use a transformation in order to apply a fixed point theorem.

\begin{defn}
\label{huile}
We consider $\Gamma$ from $F_T$ to $F_T$ defined by its coordinates:
\begin{align*}
&p_1\circ\Gamma(b)(x):=a\EE\left[\nabla F_{11}\left(x-X_t^{b}\right)\right]\,,\quad p_2\circ\Gamma(b)(x):=(1-a)\EE\left[\nabla F_{12}\left(x-Y_t^{b}\right)\right]\,,\\
&p_3\circ\Gamma(b)(x):=a\EE\left[\nabla F_{21}\left(x-X_t^{b}\right)\right]\quad\mbox{and}\quad p_4\circ\Gamma(b)(x):=(1-a)\EE\left[\nabla F_{22}\left(x-Y_t^{b}\right)\right]\,,
\end{align*}
where $p_i$ is the $i$th projection on the space $F_T$ and $X_t^{b}$ (resp. $Y_t^{b}$) is solution of the SDE
\begin{equation}
\label{hanae}
dX_t^{b}=\sigma dB_t-\nabla V_1\left(X_t^{b}\right)dt-b_1\left(t,X_t^{b}\right)dt-b_2\left(t,X_t^{b}\right)dt\,,
\end{equation}
respectively
\begin{equation}
\label{hanae2}
dY_t^{b}=\sigma d\widetilde{B_t}-\nabla V_2\left(Y_t^{b}\right)dt-b_3\left(t,Y_t^{b}\right)dt-b_4\left(t,Y_t^{b}\right)dt\,.
\end{equation}
\end{defn}

To show that there exist solutions to the equations on $X^{b}$ and on $Y^{b}$, we use the following result (see \cite[Theorem 10.2.2]{Stroock} at page 255):

\begin{prop}
\label{chap2:prop:existence:nonlin}
Let $b$ : $\bRb_+\times\mathbb{R}\longrightarrow\mathbb{R}$ be a function satisfying the three following properties:
\begin{enumerate}
\item  $\max_{s\geq 0}|b(s,0)|<\infty$.
\item  For any $n\in\mathbb{N}$, there exists a constant $c_n>0$ such that $|b(s,x)-b(s,y)|\leq c_n|x-y|$ for any reals $x$ and $y$ satisfying $|x|<n$ and $|y|<n$.
\item There exists a constant $r>0$ such that for any $|x|>r$, ${\rm sgn}(x)b(s,x)\geq 0$.
\end{enumerate}
Then, for any random variable $X_0$, the equation $E^{(b,X_0)}$ admits a unique strong solution where $E^{(b,X_0)}$ is defined by
\begin{equation*}
X_t=X_0-\int_0^tb(s,X_s)ds+\sigma B_t\,.
\end{equation*}
\end{prop}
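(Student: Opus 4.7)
The plan is to reduce the problem to the classical Lipschitz SDE existence/uniqueness theorem by truncation, and then rule out explosion using the sign condition~(3); uniqueness will be a by-product of pathwise uniqueness for the truncated equations.

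First, for each integer $n>r$, introduce the truncation $b_n(s,x) := b(s,\pi_n(x))$ where $\pi_n(x) := (x\wedge n)\vee(-n)$ is the projection onto $[-n,n]$. By hypothesis~(2), $x\mapsto b_n(s,x)$ is globally Lipschitz with constant $c_n$ uniformly in $s$; combined with $\sup_s|b_n(s,0)|=\sup_s|b(s,0)|<\infty$ from~(1), this places us within the scope of the classical It\^o existence/uniqueness theorem, so the equation
\[
X_t^n = X_0 - \int_0^t b_n(s,X_s^n)\,ds + \sigma B_t
\]
has a unique strong solution on $\bRb_+$. Set $\tau_n := \inf\{t\geq 0 : |X_t^n|\geq n\}$. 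On $\{t<\tau_n\}$ we have $b_n(s,X_s^n)=b(s,X_s^n)$ for $s\leq t$, so by pathwise uniqueness for the $b_m$-equation with $m>n$, the processes $X^m$ and $X^n$ coincide on $[0,\tau_n)$. The sequence $(\tau_n)$ is thus nondecreasing, and setting $X_t := X_t^n$ on $\{t<\tau_n\}$ defines a strong solution to the original equation up to the explosion time $\tau_\infty:=\sup_n\tau_n$.

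The key step is to show $\tau_\infty=+\infty$ almost surely. Applying It\^o's formula to $(X_t^n)^2$ and stopping at $\tau_n$ gives
\[
(X_{t\wedge\tau_n}^n)^2 = X_0^2 - 2\int_0^{t\wedge\tau_n} X_s^n\,b(s,X_s^n)\,ds + 2\sigma\int_0^{t\wedge\tau_n} X_s^n\,dB_s + \sigma^2(t\wedge\tau_n).
\]
By~(3), $-2xb(s,x)\leq 0$ whenever $|x|>r$; on $\{|x|\leq r\}$, hypotheses~(1)--(2) yield $|b(s,x)|\leq M := \sup_s|b(s,0)|+c_{\lceil r\rceil+1}r$, hence $|2xb(s,x)|\leq 2rM$. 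Therefore $-2xb(s,x)\leq K := 2rM$ pointwise. Taking expectation on the event $\{|X_0|\leq k\}$ (with an auxiliary localization of the stochastic integral if needed to make its expectation vanish) yields
\[
\EE\bigl[(X_{t\wedge\tau_n}^n)^2\,\mathbf{1}_{\{|X_0|\leq k\}}\bigr]\leq k^2 + (K+\sigma^2)t,
\]
and Chebyshev's inequality gives $\PP(\tau_n\leq t,\,|X_0|\leq k)\leq n^{-2}(k^2+(K+\sigma^2)t)\to 0$ as $n\to\infty$. Letting $k\to\infty$ shows $\tau_\infty=\infty$ a.s., so $X$ is a global strong solution. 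Uniqueness follows at once: any other strong solution $\widetilde X$ coincides with $X^n$ on $[0,\widetilde\tau_n)$ by pathwise uniqueness for the Lipschitz SDE, and the same second-moment estimate applied to $\widetilde X$ forces $\widetilde\tau_n\to\infty$.

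The main obstacle is the non-explosion step, which is mildly delicate because $X_0$ is not assumed to have any finite moment; this is why one must first condition on $\{|X_0|\leq k\}$ before applying Chebyshev, and then let $k\to\infty$. The sign condition itself transfers cleanly through the truncation since $\pi_n(x)$ shares the sign of $x$ whenever $|x|>r$, so an analogous bound $-2x b_n(s,x)\leq K$ holds for the truncated drift as well; thus no explosion can occur through the truncation boundary either.
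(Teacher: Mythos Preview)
The paper does not give its own proof of this proposition; it is quoted verbatim as \cite[Theorem~10.2.2]{Stroock} and used as a black box. Your argument is correct and is exactly the standard proof one finds for such results: truncate the drift to reduce to the globally Lipschitz case, patch the local solutions consistently via pathwise uniqueness, and rule out explosion by using condition~(3) as a Lyapunov-type bound on $x^2$. The conditioning on $\{|X_0|\le k\}$ before Chebyshev is the right way to handle an initial datum with no moment hypothesis, and the remark that the stopped stochastic integral has bounded integrand (hence is a true martingale) closes the only potential gap. The consistency and uniqueness steps are slightly compressed---strictly speaking one extends $X^n$ past $\tau_n$ by the $b_m$-flow to invoke global pathwise uniqueness---but this is routine and your sketch is adequate.
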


To show that there is a unique strong solution to the initial system, we search a fixed point to the transformation $\Gamma$. To do so, it is easy to check that for any $b\in F_T$, the equations \eqref{hanae} and \eqref{hanae2} admit a unique strong solution. Indeed, the convexity at infinity of the potentials $V_1$ and $V_2$ guarantees that the third point of Proposition \ref{chap2:prop:existence:nonlin} is satisfied.

The following definition of moments will play a crucial role in the analysis of this paper.

\begin{defn}
\label{moments}
For any $b\in F_T$ and $p>0$, we define
\begin{align*}
&\eta_p^{b}(t):=\EE\left[\left|X_t^{b}\right|^p\right]\,\,,\quad\widehat{\eta_p^{b}}(t):=\sup_{0\leq s\leq t}\eta_p^{b}(s)\,,\\
&\xi_p^{b}(t):=\EE\left[\left|Y_t^{b}\right|^p\right]\quad\mbox{and}\quad\widehat{\xi_p^{b}}(t):=\sup_{0\leq s\leq t}\xi_p^{b}(s)\,.
\end{align*}
\end{defn}

To prove Theorem \ref{sandra}, we need several lemmas.

\begin{lem}
\label{oumaima}
Set $b\in F_T$, $n\geq1$, $\rho:=(\rho_0,\rho_0,\rho_0,\rho_0)$ with $\rho_0(x):=\beta_0 x$, then $\widehat{\eta_{2n}^{\rho}}(T)+\widehat{\xi_{2n}^{\rho}}<\infty$, for $n\geq0$ such that $\EE\left[\left|X_0^{2n}\right|\right]<\infty$ and $\EE\left[\left|Y_0^{2n}\right|\right]<\infty$. Moreover:
\begin{equation*}
\widehat{\eta_{2n}^{b}}(T)\leq k_1(n)\left[T^{2n}+\left(||b_1-\rho_0||_T^{2n}+||b_2-\rho_0||_T^{2n}\right)\left(T^{2n}+\widehat{\eta_{4qn}^{\rho}}(T)\right)\right]\,,
\end{equation*}
and
\begin{equation*}
\widehat{\xi_{2n}^{b}}(T)\leq k_2(n)\left[T^{2n}+\left(||b_3-\rho_0||_T^{2n}+||b_4-\rho_0||_T^{2n}\right)\left(T^{2n}+\widehat{\xi_{4qn}^{\rho}}(T)\right)\right]\,,
\end{equation*}
where $k_1(n)$ and $k_2(n)$ are constants which do not depend on $b$, $\rho$ or $T$.

\end{lem}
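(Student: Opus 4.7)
The plan is to prove the lemma in two stages matching its two assertions: first the finiteness of $\widehat{\eta_{2n}^{\rho}}(T)$ and $\widehat{\xi_{2n}^{\rho}}(T)$, then the quantitative bound on $\widehat{\eta_{2n}^{b}}(T)$ (the estimate for $\widehat{\xi_{2n}^{b}}(T)$ being entirely analogous, with $V_2$ and $b_3,b_4$ replacing $V_1$ and $b_1,b_2$). The key idea is to use $X_t^\rho$, whose drift is enhanced by the linearly dissipative term $-2\beta_0 x$, as a reference process with all moments under control, and then to bound $X_t^b$ by coupling and comparison with $X_t^\rho$.

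For the first assertion I would apply It\^o's formula to $(X_t^\rho)^{2n}$. The drift $-(\nabla V_1(x)+2\beta_0 x)$ inherits from (H4), for $\beta_0 > C_2/2$, the coercivity $x(\nabla V_1(x)+2\beta_0 x)\geq C_4 x^4$. Taking expectations produces an inequality of the form
\[
\frac{d}{dt}\EE\bigl[(X_t^\rho)^{2n}\bigr]\leq -2nC_4\,\EE\bigl[(X_t^\rho)^{2n+2}\bigr]+n(2n-1)\sigma^2\,\EE\bigl[(X_t^\rho)^{2n-2}\bigr].
\]
A weighted Young inequality absorbs the lower-order term into the dissipation, and Jensen's inequality $\EE[(X^\rho)^{2n+2}]\geq\EE[(X^\rho)^{2n}]^{(n+1)/n}$ converts this into a nonlinear dissipative ODE for $\EE[(X_t^\rho)^{2n}]$ that is bounded uniformly in $T$. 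The same argument applied to $Y_t^\rho$ yields $\widehat{\xi_{2n}^{\rho}}(T)<\infty$.

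For the quantitative bound I would couple $X_t^b$ and $X_t^\rho$ by the same Brownian motion. The difference $D_t:=X_t^b-X_t^\rho$ is then absolutely continuous with $D_0=0$, and satisfies
\[
\dot D_t=-\bigl(\nabla V_1(X_t^b)-\nabla V_1(X_t^\rho)\bigr)-2\beta_0 D_t-\bigl((b_1-\rho_0)+(b_2-\rho_0)\bigr)(t,X_t^b).
\]
Multiplying by $D_t$, using (H3) to control the $V_1$-difference, and choosing $\beta_0>\theta_1/2$ so that the linear dissipation dominates, a square-root manipulation yields the pathwise bound
\[
|D_t|\leq A\int_0^t\bigl(1+|X_s^b|^{2q}\bigr)\,ds,\qquad A:=||b_1-\rho_0||_T+||b_2-\rho_0||_T.
\]
Raising to the $2n$-th power via Jensen's integral inequality, using $(1+x^{2q})^{2n}\leq 2^{2n-1}(1+x^{4qn})$, and closing with the triangle bound $|X_t^b|^{2n}\leq 2^{2n-1}(|X_t^\rho|^{2n}+|D_t|^{2n})$, the estimate takes the shape $\widehat{\eta_{2n}^{b}}(T)\leq k_1(n)[T^{2n}+A^{2n}(T^{2n}+\widehat{\eta_{4qn}^{\rho}}(T))]$, with a constant $k_1(n)$ depending only on $n$ and on the fixed assumption parameters.

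The main obstacle is that the triangle decomposition $|X_s^b|^{4qn}\leq 2^{4qn-1}(|X_s^\rho|^{4qn}+|D_s|^{4qn})$ needed to replace $\widehat{\eta_{4qn}^{b}}$ by $\widehat{\eta_{4qn}^{\rho}}$ leaves behind a $|D_s|^{4qn}$ term, thereby coupling moment estimates at different orders. I would close this gap by a bootstrap: the hypothesis $\EE[|X_0|^{8q^2}]<\infty$ from Theorem~\ref{sandra}, combined with a crude It\^o estimate using only (H6) and the polynomial growth of $b\in F_T$, ensures $\widehat{\eta_{4qn}^{b}}(T)$ is a priori finite. Applying the coupling estimate once more at moment order $4qn$ then lets one trade $\widehat{\eta_{4qn}^{b}}$ for $\widehat{\eta_{4qn}^{\rho}}$ up to higher-order corrections that can be absorbed into $k_1(n)$ and the dominant $T^{2n}$ and $A^{2n}T^{2n}$ factors.
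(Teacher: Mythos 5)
Your Step 1 (finiteness of moments of $X^\rho$ and $Y^\rho$) matches the paper's argument and is correct.

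Your Step 2 contains a genuine gap. After the coupling and the dissipativity estimate you arrive at the pathwise bound
\[
|D_t|\leq A\int_0^t\bigl(1+|X_s^{b}|^{2q}\bigr)\,ds,\qquad A:=||b_1-\rho_0||_T+||b_2-\rho_0||_T,
\]
where the right-hand side involves $|X_s^{b}|$. Raising to the $2n$-th power and taking expectations then produces $\widehat{\eta_{4qn}^{b}}(T)$, not $\widehat{\eta_{4qn}^{\rho}}(T)$, so you have not actually obtained the bound stated in the lemma (which has $\widehat{\eta_{4qn}^{\rho}}(T)$ on the right). You notice this yourself, but the workaround you propose does not close. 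The paper avoids the problem by an additional trick you do not use: since $b_1,b_2\in\Lambda_T^2$ they are increasing in $x$, so $\mathrm{sign}(X_s^{b}-X_s^{\rho})\bigl[b_i(s,X_s^{b})-b_i(s,X_s^{\rho})\bigr]\geq0$, which lets one replace $b_i(s,X_s^{b})$ by $b_i(s,X_s^{\rho})$ in the drift-difference term at no cost. With this monotonicity step the pathwise bound directly involves $|X_s^{\rho}|^{2q}$ and the lemma follows by elementary algebra. (The paper implements the same dissipativity idea as yours via $|D_t|^\alpha$ with $\alpha\to1^+$ rather than via $D_t^2$; that difference is cosmetic. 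The substantive difference is the monotonicity step.)

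Your proposed bootstrap does not rescue the argument. First, each application of your coupling estimate at order $4qn$ reintroduces moments of $X^{b}$ at order $8q^2n$, and so on: the recursion escalates the moment order indefinitely and never terminates. Second, the hypothesis $\EE[|X_0|^{8q^2}]<\infty$ that you invoke is an assumption of Theorem \ref{sandra}, not of Lemma \ref{oumaima}; the lemma only assumes finiteness of the $2n$-th initial moments. Importing the stronger hypothesis into the lemma's proof would weaken the lemma and, in any case, still would not terminate the recursion. The monotonicity of $b_1,b_2$ is the idea you are missing; once it is used, no bootstrap is necessary.
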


\begin{proof}
{\bf Step 1.} By considering $\rho:=(\rho_0,\rho_0,\rho_0,\rho_0)$, we thus have the following equation:
\begin{equation*}
X_t^\rho=X_0+\sigma B_t-\int_0^t\nabla V_1\left(X_s^\rho\right)ds-2\beta_0\int_0^tX_s^\rho ds\,.
\end{equation*}
For any $n\geq1$, It\^o formula yields
\begin{align*}
\frac{d}{dt}\EE\left[\left(X_t^\rho\right)^{2n}\right]=&n(2n-1)\sigma^2\EE\left[\left(X_t^\rho\right)^{2n-2}\right]-4n\beta_0\EE\left[\left(X_t^\rho\right)^{2n}\right]-2n\EE\left[\left(X_t^\rho\right)^{2n-1}V'\left(X_t^\rho\right)\right]\\
\leq&n(2n-1)\sigma^2\left(\EE\left[\left(X_t^\rho\right)^{2n}\right]\right)^{1-\frac{1}{n}}-4n\beta_0\EE\left[\left(X_t^\rho\right)^{2n}\right]\\
&-2nC_4\EE\left[\left(X_t^\rho\right)^{2n+2}\right]+2nC_2\EE\left[\left(X_t^\rho\right)^{2n}\right]\\
\leq&n(2n-1)\sigma^2\left(\EE\left[\left(X_t^\rho\right)^{2n}\right]\right)^{1-\frac{1}{n}}+2n(C_2-2\beta_0)\EE\left[\left(X_t^\rho\right)^{2n}\right]\\
&-2nC_4\left(\EE\left[\left(X_t^\rho\right)^{2n}\right]\right)^{1+\frac{1}{n}}\,.
\end{align*}
We immediately deduce that
\begin{equation*}
\EE\left[\left(X_t^\rho\right)^{2n}\right]\leq\max\left\{\EE\left[\left(X_0\right)^{2n}\right]\,;\,\left(\frac{C_2-2\beta_0+\sqrt{(C_2-2\beta_0)^2+2C_4(2n-1)\sigma^2}}{2C_4}\right)^{n}\right\}<\infty\,.
\end{equation*}
We deduce that
\begin{equation*}
\sup_{t\geq0}\EE\left[\left(X_t^\rho\right)^{2n}\right]\leq c_n\left(1+\EE\left[\left(X_0\right)^{2n}\right]\right)\,,
\end{equation*}
where $c_n$ is constant. Similarly we obtain 
\begin{equation*}
\sup_{t\geq0}\EE\left[\left(Y_t^\rho\right)^{2n}\right]\leq c_n\left(1+\EE\left[\left(Y_0\right)^{2n}\right]\right)\,.
\end{equation*}
As a consequence, if $\EE\left[\left(X_0\right)^{2n}\right]$ and $\EE\left[\left(Y_0\right)^{2n}\right]$ are finite, we have that $\widehat{\eta_{2n}^{\rho}}(T)<\infty$ and $\widehat{\xi_{2n}^{\rho}}(T)<\infty$.\\[4pt]
{\bf Step 2.} We have
\begin{align*}
X_t^{b}-X_t^\rho=&-\int_0^t\left[V_1'\left(X_s^{b}\right)-V_1'\left(X_s^\rho\right)\right]ds\\
&-\int_0^t\left[b_1\left(s,X_s^{b}\right)-p_1\circ\rho\left(s,X_s^\rho\right)\right]ds-\int_0^t\left[b_2\left(s,X_s^{b}\right)-p_2\circ\rho\left(s,X_s^\rho\right)\right]ds\,.
\end{align*}
Consequently, for any $\alpha>1$, we obtain that $\left|X_t^{b}-X_t^\rho\right|^\alpha$ is equal to
\begin{align*}
&-\alpha\int_0^t{\rm sign}\left(X_s^{b}-X_s^\rho\right)\left|X_s^{b}-X_s^\rho\right|^{\alpha-1}\mathds{1}_{X_s^{b}\neq X_s^\rho}\left[V_1'\left(X_s^{b}\right)-V_1'\left(X_s^{\rho}\right)\right]ds\\
&-\alpha\int_0^t{\rm sign}\left(X_s^{b}-X_s^\rho\right)\left|X_s^{b}-X_s^\rho\right|^{\alpha-1}\mathds{1}_{X_s^{b}\neq X_s^\rho}\left[b_1\left(s,X_s^{b}\right)-p_1\circ\rho\left(s,X_s^{\rho}\right)\right]ds\\
&-\alpha\int_0^t{\rm sign}\left(X_s^{b}-X_s^\rho\right)\left|X_s^{b}-X_s^\rho\right|^{\alpha-1}\mathds{1}_{X_s^{b}\neq X_s^\rho}\left[b_2\left(s,X_s^{b}\right)-p_2\circ\rho\left(s,X_s^{\rho}\right)\right]ds\,.
\end{align*}
Taking the limit as $\alpha$ goes to $1^+$ we get
\begin{align}
\label{eq: difference}
\left|X_t^{b}-X_t^\rho\right|=&-\int_0^t{\rm sign}\left(X_s^{b}-X_s^\rho\right)\left[V_1'\left(X_s^{b}\right)-V_1'\left(X_s^{\rho}\right)\right]ds\notag\\
&-\int_0^t{\rm sign}\left(X_s^{b}-X_s^\rho\right)\left[b_1\left(s,X_s^{b}\right)-p_1\circ\rho\left(s,X_s^{\rho}\right)\right]ds\notag\\
&-\int_0^t{\rm sign}\left(X_s^{b}-X_s^\rho\right)\left[b_2\left(s,X_s^{b}\right)-p_2\circ\rho\left(s,X_s^{\rho}\right)\right]ds\,.
\end{align}
We will control each term on the right-hand side of \eqref{eq: difference}. The first one can be controlled by
\begin{equation*}
-\int_0^t{\rm sign}\left(X_s^{b}-X_s^\rho\right)\left[V_1'\left(X_s^{b}\right)-V_1'\left(X_s^{\rho}\right)\right]ds\leq-\gamma\int_0^t\left|X_s^{b}-X_s^\rho\right|ds+T\widetilde{\gamma}\,,
\end{equation*}
for any $t\leq T$. In the last formula, $\gamma$ and $\widetilde{\gamma}$ are constants which depend on $V_1$. For the second term: since $b_1$ is increasing
\begin{equation*}
\rm sign \left(X_s^{b}-X_s^\rho\right)\left[b_1\left(s,X_s^{b}\right)-b_1\left(s,X_s^{\rho}\right)\right]\geq 0,
\end{equation*}
which implies that
\begin{align*}
&-\rm sign\left(X_s^{b}-X_s^\rho\right)\left[b_1\left(s,X_s^{b}\right)-p_1\circ\rho\left(s,X_s^{\rho}\right)\right]
\\&\leq -\rm sign\left(X_s^{b}-X_s^\rho\right)\left[b_1\left(s,X_s^{b}\right)-p_1\circ\rho\left(s,X_s^{\rho}\right)\right]+\rm sign \left(X_s^{b}-X_s^\rho\right)\left[b_1\left(s,X_s^{b}\right)-b_1\left(s,X_s^{\rho}\right)\right]
\\&=-\rm sign\left(X_s^{b}-X_s^\rho\right)\left[b_1\left(s,X_s^{\rho}\right)-p_1\circ\rho\left(s,X_s^{\rho}\right)\right]
\\&\leq |b_1\left(s,X_s^{\rho}\right)-p_1\circ\rho\left(s,X_s^{\rho}\right)|.
\end{align*}
As a consequence, the second term is bounded above by
$$
\int_0^t  |b_1\left(s,X_s^{\rho}\right)-p_1\circ\rho\left(s,X_s^{\rho}\right)|\,ds.
$$
Similarly the third term is bounded above by
$$
\int_0^t  |b_2\left(s,X_s^{\rho}\right)-p_2\circ\rho\left(s,X_s^{\rho}\right)|\,ds.
$$
Substituting these estimates back into \eqref{eq: difference} we get
\begin{align*}
\left|X_t^{b}-X_t^\rho\right|\leq&T\widetilde{\gamma}+\int_0^t\left|b_1\left(s,X_s^{\rho}\right)-p_1\circ\rho\left(s,X_s^{\rho}\right)\right|ds\\
&+\int_0^t\left|b_2\left(s,X_s^{\rho}\right)-p_2\circ\rho\left(s,X_s^{\rho}\right)\right|ds\,.
\end{align*}
As $b_1$, $b_2$ and $\rho$ are in the space $\Lambda_T$, we know that $||b_1||_T+||b_2||_T+||\rho||_T<\infty$ so that $||b_1-\rho||_T<\infty$ and $||b_2-\rho||_T<\infty$. We directly deduce:
\begin{equation*}
\left|X_t^{b}-X_t^\rho\right|\leq T\widetilde{\gamma}+\left(||b_1-\rho||_T+||b_2-\rho||_T\right)\int_0^t\left(1+\left(X_s^{\rho}\right)^{2q}\right)ds\,.
\end{equation*}
By using triangular inequality, we obtain:
\begin{equation*}
\left|X_t^{b}\right|^{2n}\leq\left(\left|X_t^{\rho}\right|+\left|X_t^{b}-X_t^\rho\right|\right)^{2n}\leq2^{2n}\left\{\left|X_t^{\rho}\right|^{2n}+\left|X_t^{b}-X_t^\rho\right|^{2n}\right\}\,.
\end{equation*}
Consequently, we have:
\begin{equation*}
\widehat{\eta_{2n}^{b}}(T)=\sup_{0\leq t\leq T}\EE\left[\left|X_t^{b}\right|^{2n}\right]\leq2^{2n}\left(\widehat{\eta_{2n}^{\rho}}(T)+\sup_{0\leq t\leq T}\EE\left[\left|X_t^{b}-X_t^\rho\right|^{2n}\right]\right)\,.
\end{equation*}
But, we can write
\begin{align*}
\left|X_t^{b}-X_t^\rho\right|^{2n}\leq&2^{2n}\left\{T^{2n}\widetilde{\gamma}^{2n}+2^{2n}\left(||b_1-\rho||_T^{2n}+||b_2-\rho||_T^{2n}\right)\left[\int_0^T\left(1+\left|X_t^{\rho}\right|^{2q}\right)dt\right]^{2n}\right\}\\
\leq&2^{2n}\left\{T^{2n}\widetilde{\gamma}^{2n}+2^{4n}\left(||b_1-\rho||_T^{2n}+||b_2-\rho||_T^{2n}\right)\left[T^{2n}+\left(\int_0^T\left|X_t^{\rho}\right|^{2q}dt\right)^{2n}\right]\right\}\\
\leq&2^{2n}\left\{T^{2n}\widetilde{\gamma}^{2n}+2^{4n}\left(||b_1-\rho||_T^{2n}+||b_2-\rho||_T^{2n}\right)\left[T^{2n}+\int_0^T\left|X_t^{\rho}\right|^{4qn}dt\right]\right\}\,.
\end{align*}
By taking the expectation then the supremum over $[0;T]$, we find the formula for $\widehat{\eta_{2n}^{b}}(T)$. The same computations hold for the second diffusion.

\end{proof}

\begin{lem}
\label{vin}
$\Gamma$ is an application from $F_T$ to $F_T$ and
\begin{equation}
\label{eq:vin}
\left|\left|\Gamma b\right|\right|_T^F\leq C_0\left(1+\widehat{\eta_{2q}^{b}}(T)+\widehat{\xi_{2q}^{b}}(T)\right)\,,
\end{equation}
where $C_0$ is a positive constant.
\end{lem}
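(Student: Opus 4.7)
The plan is to verify that for every $b \in F_T$, each component $p_i \circ \Gamma(b)$ lies in $\Lambda_T = \Lambda_T^1 \cap \Lambda_T^2 \cap \Lambda_T^3$, and then to read off the estimate \eqref{eq:vin} from polynomial growth bounds on the $\nabla F_{ij}$. First I would observe that, for any $b \in F_T$, the SDEs \eqref{hanae} and \eqref{hanae2} admit unique strong solutions $(X_t^b), (Y_t^b)$ by Proposition \ref{chap2:prop:existence:nonlin} (the convexity at infinity of $V_1,V_2$ from (H5) supplies the sign condition outside a ball, while local Lipschitzness of the drift comes from $\Lambda_T^1$ together with (H1)), and these processes have finite moments of every order by Lemma \ref{oumaima}. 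Hence each $p_i \circ \Gamma(b)$ is a well-defined function of $(t,x)$.

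The local Lipschitz property ($\Lambda_T^1$) and the monotonicity-with-linear-minorization ($\Lambda_T^2$) are then checked componentwise. For $p_1 \circ \Gamma(b)$ and $p_4 \circ \Gamma(b)$, the local Lipschitzness of $\nabla F_{11}, \nabla F_{22}$ from (H1), combined with moment bounds on $X_t^b, Y_t^b$, yields a Lipschitz constant on every ball that is uniform in $t \in [0,T]$; the monotonicity in $x$ follows because $\nabla F_{11}, \nabla F_{22}$ are increasing by (H7); and the linear minorization is obtained by noting that, after averaging, $x \mapsto \EE[\nabla F_{11}(x - X_t^b)]$ inherits the superlinear growth of $\nabla F_{11}$ at $\pm\infty$, so one may select any small $\xi_1 > 0$ and then a sufficiently negative constant $\xi_0$, uniformly in $t \le T$. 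For $p_2 \circ \Gamma(b)$ and $p_3 \circ \Gamma(b)$, the global Lipschitzness of $\nabla F_{12}, \nabla F_{21}$ from (H8) gives both properties directly.

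For $\Lambda_T^3$ and the norm bound \eqref{eq:vin}, I would combine the polynomial growth $|\nabla F_{ii}(u)| \leq C(1 + |u|^{2q-1})$ from (H7) with the Lipschitz growth $|\nabla F_{ij}(u)| \leq C(1 + |u|)$ for $i \neq j$ from (H8). Using $|x - X_t^b|^{2q-1} \leq 2^{2q-2}(|x|^{2q-1} + |X_t^b|^{2q-1})$ and Hölder's inequality $\EE[|X_t^b|^{2q-1}] \leq 1 + \EE[|X_t^b|^{2q}] \leq 1 + \widehat{\eta_{2q}^b}(T)$, one obtains
\[
\left|p_1 \circ \Gamma(b)(t,x)\right| \leq a C\bigl(1 + |x|^{2q-1}\bigr)\bigl(1 + \widehat{\eta_{2q}^b}(T)\bigr),
\]
and dividing by $1+|x|^{2q}$ and taking the supremum in $x$ and $t \leq T$ gives $\left|\left|p_1\circ\Gamma(b)\right|\right|_T \leq C'\bigl(1 + \widehat{\eta_{2q}^b}(T)\bigr)$. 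The analogous computations for $p_2, p_3, p_4$ (using $\widehat{\xi_{2q}^b}$ in place of $\widehat{\eta_{2q}^b}$ for components involving $Y_t^b$, and replacing $2q-1$ by $1$ for the Lipschitz interactions $\nabla F_{12}, \nabla F_{21}$) sum to yield \eqref{eq:vin} with an absolute constant $C_0$.

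The step I expect to require the most care is the verification of the linear minorization in $\Lambda_T^2$ for $p_1, p_4$: one must control the shape of $x \mapsto \EE[\nabla F_{11}(x - X_t^b)]$ uniformly in $t \in [0,T]$ using only the polynomial monotonicity of $\nabla F_{11}$ together with the moment estimates from Lemma \ref{oumaima}. All remaining steps are routine bookkeeping based on the assumptions and on Lemma \ref{oumaima}.
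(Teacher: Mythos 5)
Your Step-2 computation (the $\|\cdot\|_T$ bound giving \eqref{eq:vin}) matches the paper's: bound $|\nabla F_{ij}|$ by its polynomial growth, take expectations, and divide by $1+|x|^{2q}$; the bookkeeping differs slightly (you factor into a product $(1+|x|^{2q-1})(1+\widehat{\eta_{2q}^b}(T))$ rather than a sum $1+|x|^{2q}+\widehat{\eta_{2q}^b}(T)$), but the outcome is identical, $\|p_1\circ\Gamma(b)\|_T\leq aC(1+\widehat{\eta_{2q}^b}(T))$ and analogously for $p_2,p_3,p_4$.

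Step 1, verification of $\Lambda_T^1\cap\Lambda_T^2$, is where you diverge from the paper, and where there is a gap. For $p_1,p_4$ the paper does not argue from superlinear growth of the averaged drift at infinity. It assumes directly that $\nabla F_{ii}$ admits a linear minorization $\nabla F_{ii}(u)-\nabla F_{ii}(v)\geq\beta_{ii}^1(u-v)+\beta_{ii}^0$ for $u\geq v$, observes that this inequality is preserved under the substitution $u=x-X_t^b$, $v=y-X_t^b$ and under taking expectations, and then reads off $\xi_1,\xi_0$ for $\Lambda_T^2$ as the infima of the $a\beta^1_{ij},(1-a)\beta^1_{ij}$ and $a\beta^0_{ij},(1-a)\beta^0_{ij}$. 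Your remark that $\EE[\nabla F_{11}(x-X_t^b)]$ ``inherits superlinear growth'' so that a small $\xi_1>0$ and a sufficiently negative $\xi_0$ exist, uniformly in $t\le T$, is not itself a proof; to make it one you would essentially need to establish the $\beta$-minorization for $\nabla F_{11}$ first, which is precisely what the paper takes as given.

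The more serious problem is $p_2,p_3$. You claim the Lipschitzness of $\nabla F_{12},\nabla F_{21}$ from (H8) ``gives both properties directly.'' It gives $\Lambda_T^1$, but it does \emph{not} give $\Lambda_T^2$: a Lipschitz function need not be increasing, and a Lipschitz function that is, say, bounded cannot satisfy $b(s,x)-b(s,y)\geq\xi_1(x-y)+\xi_0$ for all $x\geq y$ with $\xi_1>0$ and a single $\xi_0$, since the right-hand side goes to $+\infty$ as $x-y\to\infty$. Note that the paper's own argument at this point invokes constants $\beta^0_{12},\beta^1_{12},\beta^0_{21},\beta^1_{21}$ for the cross potentials as well — that is, it tacitly uses a linear minorization for $\nabla F_{12},\nabla F_{21}$ over and above what (H8) literally states. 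You should flag that such a lower bound is needed for the cross terms rather than assert an implication that does not hold.
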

\begin{proof}
{\bf Step 1.} We first need to prove that $p_i\circ\Gamma(b)$ lives in $\Lambda_T^1\bigcap\Lambda_T^2$ for any $1\leq i\leq 4$. We will do so only for $i=1$. As $\nabla F_{11}$ is increasing and continuous, we deduce that $p_1\circ\Gamma(b)$ is continuous and increasing in $x$. It is also locally Lipschitz (uniformly in the time variable). Due to the assumptions on the potential $F_{11}$, we have for any $x\geq y$
\begin{align*}
p_1\circ\Gamma(b)(t,x)-p_1\circ\Gamma(b)(t,y)=&a\EE\left[\nabla F_{11}\left(x-X_t^{b}\right)-\nabla F_{11}\left(y-X_t^{b}\right)\right]\\
\geq&a\beta_{11}^1(x-y)+a\beta_{11}^0\,.
\end{align*}
By taking $\xi_1:=\inf\left\{a\beta_{11}^1;a\beta_{21}^1;(1-a)\beta_{12}^1;(1-a)\beta_{22}^1\right\}$ and $\xi_0:=\inf\left\{a\beta_{11}^0;a\beta_{21}^0;(1-a)\beta_{12}^0;(1-a)\beta_{22}^0\right\}$ we obtain that $p_i\circ\Gamma(b)$ is in $\Lambda_T^1\bigcap\Lambda_T^2$.\\[4pt]
{\bf Step 2.} We will now prove Inequality \eqref{eq:vin} (which, by the way, proves that $p_i\circ\Gamma(b)$ lives in $\Lambda_T^3$). By definition, we have:
\begin{align}
\nonumber
\left|\left|p_1\circ\Gamma(b)\right|\right|_T:=&a\sup_{x\in\bRb}\frac{\left|\EE\left[\nabla F_{11}\left(x-X_t^{b}\right)\right]\right|}{1+x^{2q}}\\
\nonumber
\leq&a\sup_{x\in\bRb}\frac{\EE\left[\left|\nabla F_{11}\left(x-X_t^{b}\right)\right|\right]}{1+x^{2q}}\\
\nonumber
\leq&a\sup_{x\in\bRb}\frac{C\left(1+|x|^{2q}+\EE\left[\left|X_t^{b}\right|^{2q}\right]\right)}{1+x^{2q}}\\
\label{chaudron}
\leq&aC\left(1+\widehat{\eta_{2q}^{b}}(T)\right)\,.
\end{align}
By proceeding similarly, we obtain
\begin{align}
\label{chaudron2}
&\left|\left|p_2\circ\Gamma(b)\right|\right|_T\leq (1-a)C\left(1+\widehat{\xi_{2q}^{b}}(T)\right)\,,\\
\label{chaudron3}
&\left|\left|p_3\circ\Gamma(b)\right|\right|_T\leq aC\left(1+\widehat{\eta_{2q}^{b}}(T)\right)\,,\\
\label{chaudron4}
\mbox{and}\quad&\left|\left|p_4\circ\Gamma(b)\right|\right|_T\leq (1-a)C\left(1+\widehat{\xi_{2q}^{b}}(T)\right)\,.
\end{align}

As a consequence, we have
\begin{equation*}
\left|\left|\Gamma b\right|\right|_T^F\leq C_0\left(1+\widehat{\eta_{2q}^{b}}(T)+\widehat{\xi_{2q}^{b}}(T)\right)\,.
\end{equation*}

\end{proof}

\begin{lem}
\label{vin2}
$\Gamma$ is continuous and satisfies
\begin{align}
\label{lala1}
&\left|\left|p_1\circ\Gamma\left(b\right)-p_1\circ\Gamma\left(c\right)\right|\right|_T\leq\left(||b_1-c_1||_T+||b_2-c_2||_T\right)\sqrt{T}C_0'\left(\widehat{\eta_{4q}^{b}}(T),\widehat{\eta_{4q}^{c}}(T)\right)\,,\\
\label{lala2}
&\left|\left|p_2\circ\Gamma\left(b\right)-p_2\circ\Gamma\left(c\right)\right|\right|_T\leq\left(||b_1-c_1||_T+||b_2-c_2||_T\right)\sqrt{T}C_0'\left(\widehat{\eta_{4q}^{b}}(T),\widehat{\eta_{4q}^{c}}(T)\right)\,,\\
\label{lala3}
&\left|\left|p_3\circ\Gamma\left(b\right)-p_3\circ\Gamma\left(c\right)\right|\right|_T\leq\left(||b_3-c_3||_T+||b_4-c_4||_T\right)\sqrt{T}C_0'\left(\widehat{\xi_{4q}^{b}}(T),\widehat{\xi_{4q}^{c}}(T)\right)\,,\\
\label{lala4}
\mbox{and}\quad&\left|\left|p_4\circ\Gamma\left(b\right)-p_4\circ\Gamma\left(c\right)\right|\right|_T\leq\left(||b_3-c_3||_T+||b_4-c_4||_T\right)\sqrt{T}C_0'\left(\widehat{\xi_{4q}^{b}}(T),\widehat{\xi_{4q}^{c}}(T)\right)\,,
\end{align}
where $C_0'$ is an increasing function for both variables.

\end{lem}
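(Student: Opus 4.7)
The plan is to reduce each of the four inequalities \eqref{lala1}--\eqref{lala4} to an $L^2$ control on the difference of the auxiliary diffusions, namely $X_t^b - X_t^c$ for the projections $p_1, p_3$ (which involve $X^b$) and $Y_t^b - Y_t^c$ for $p_2, p_4$ (which involve $Y^b$), and then to combine that bound with the polynomial growth (H7) or Lipschitz (H8) regularity of $\nabla F_{ij}$ to control the $||\cdot||_T$ norm of $p_i\circ\Gamma(b) - p_i\circ\Gamma(c)$. Continuity of $\Gamma$ then follows from these Lipschitz-in-$b$ estimates, since Lemma \ref{oumaima} keeps the moment factors locally bounded.

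First I would derive the $L^2$ estimate on $X_t^b - X_t^c$. Since both processes are driven by the same Brownian motion, the difference is absolutely continuous and the chain rule applied to $|X_t^b - X_t^c|^2$ gives a drift-only evolution. Splitting $b_i(s, X_s^b) - c_i(s, X_s^c) = [b_i(s, X_s^b) - b_i(s, X_s^c)] + [b_i(s, X_s^c) - c_i(s, X_s^c)]$, I would use (H3) to bound the $\nabla V_1$ contribution by $2\theta_1|X^b - X^c|^2$, use the monotonicity of $b_i\in\Lambda_T^2$ to discard the first (non-positive) piece, and use Young's inequality $2|u||v|\le u^2 + v^2$ together with the definition of $||\cdot||_T$ to control the second piece. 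After a Gr\"onwall argument and the bound
\[
\EE|b_i(s, X_s^c) - c_i(s, X_s^c)|^2 \le 2\,||b_i - c_i||_T^2\bigl(1 + \widehat{\eta_{4q}^c}(T)\bigr),
\]
this yields
\[
\EE|X_t^b - X_t^c|^2 \le K(T)\, T\, \bigl(||b_1 - c_1||_T^2 + ||b_2 - c_2||_T^2\bigr)\,\bigl(1 + \widehat{\eta_{4q}^c}(T)\bigr),
\]
which supplies the desired $\sqrt{T}$ scaling. The same argument on the second SDE, using (H3) on $V_2$ and monotonicity of $b_3, b_4$, yields the analogous bound for $\EE|Y_t^b - Y_t^c|^2$.

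To transfer this to $p_i\circ\Gamma$, I would use that (H7) implies $|\nabla F_{11}(u) - \nabla F_{11}(v)| \le C|u-v|\bigl(1 + |u|^{2q-2} + |v|^{2q-2}\bigr)$. Setting $u = x - X_t^b$, $v = x - X_t^c$, taking expectation, and applying Cauchy-Schwarz gives
\[
|p_1\circ\Gamma(b)(t,x) - p_1\circ\Gamma(c)(t,x)| \le aC\,\bigl(\EE|X_t^b - X_t^c|^2\bigr)^{1/2}\bigl(\EE(1 + |x - X_t^b|^{2q-2} + |x - X_t^c|^{2q-2})^2\bigr)^{1/2}.
\]
The first factor is bounded by the $L^2$ estimate, and the second by $C'(1 + |x|^{2q-2})\bigl(1 + \widehat{\eta_{4q}^b}(T) + \widehat{\eta_{4q}^c}(T)\bigr)^{1/2}$ after Young's inequality. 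Dividing by $1 + |x|^{2q}$ and using that $(1 + |x|^{2q-2})/(1 + |x|^{2q})$ is uniformly bounded on $\bRb$ gives \eqref{lala1}, with $C_0'$ an increasing function of both moment arguments. The estimate \eqref{lala4} is obtained by the same scheme with $\nabla F_{22}$, $Y^b - Y^c$, and $b_3, b_4$; for $p_3$ and $p_2$ the Lipschitz assumption (H8) makes the second Cauchy-Schwarz factor a constant, so only the $L^2$ estimate is needed. Continuity of $\Gamma$ then follows immediately: on any bounded subset of $F_T$ the moments $\widehat{\eta_{4q}^b}, \widehat{\xi_{4q}^b}$ are controlled by Lemma \ref{oumaima}, and the four inequalities give Lipschitz continuity on that subset.

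The main obstacle is to coordinate three sources of growth---(H3), the monotonicity of $b_i$, and the polynomial growth (H7) of $\nabla F_{11}, \nabla F_{22}$---so as to preserve both the $\sqrt{T}$ scaling and the exact moment order $4q$ on the right-hand side. The monotonicity of $b_i$ is essential: it lets one avoid any direct Lipschitz bound on $b_i$ (which would inject a further polynomial factor into the Gr\"onwall estimate), and matching the growth order $2q-1$ of $\nabla F_{11}$ against the moment $4q$ is exactly what makes the ratio $(1 + |x|^{2q-2})/(1 + |x|^{2q})$ bounded and thus the supremum defining $||\cdot||_T$ finite.
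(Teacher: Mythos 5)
Your proposal takes essentially the same route as the paper: reduce each projection to an $L^2$ estimate on $X_t^b-X_t^c$ (or $Y_t^b-Y_t^c$) obtained by differentiating the squared difference, using (H3) on $\nabla V$, discarding the matching-$b$ contribution by monotonicity, bounding $b_i-c_i$ via the $\|\cdot\|_T$ norm, and applying Gr\"onwall; then transfer this to $\|p_i\circ\Gamma(b)-p_i\circ\Gamma(c)\|_T$ via the polynomial-increment bound on $\nabla F_{11},\nabla F_{22}$ plus Cauchy--Schwarz (or via (H8) directly for the cross terms). One small remark: by carefully tracking that $p_2$ involves $Y^b$ (hence $b_3,b_4$ and $\widehat\xi$) and $p_3$ involves $X^b$ (hence $b_1,b_2$ and $\widehat\eta$), your argument actually proves \eqref{lala2} and \eqref{lala3} with the right-hand sides interchanged relative to what the lemma states --- this looks like a typo in the paper, and it is harmless where the lemma is used (the four bounds are simply summed in Lemma \ref{perceval}).
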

\begin{proof}
Set $s\in[0;T]$ and $x\in\bRb$. By triangular inequality, we have
\begin{equation*}
\left|p_1\circ\Gamma\left(b\right)(s,x)-p_1\circ\Gamma\left(c\right)(s,x)\right|\leq\EE\left[\left|F_{11}'\left(x-X_s^{b}\right)-F_{11}'\left(x-X_s^{c}\right)\right|\right]\,.
\end{equation*}
By the assumptions on $F_{11}$, we get:
\begin{align*}
&\left|p_1\circ\Gamma\left(b\right)(s,x)-p_1\circ\Gamma\left(c\right)(s,x)\right|\leq C_q\EE\left[\Delta_s(b,c)\left(1+\left(\Delta_s^{b}(x)\right)^{2q-2}+\left(\Delta_s^{c}(x)\right)^{2q-2}\right)\right],\\
\mbox{where}\quad&\Delta_s(b,c):=\left|X_s^{b}-X_s^{c}\right|,\\
\mbox{and}\quad&\Delta_s^{b}(x):=\left|x-X_s^{b}\right|\quad\mbox{for any }b\in F_T\,.
\end{align*}
As $(a+b)^{2q-2}\leq2^{2q-2}\left(a^{2q-2}+b^{2q-2}\right)$, we deduce:
\begin{align*}
&\left|p_1\circ\Gamma\left(b\right)(s,x)-p_1\circ\Gamma\left(c\right)(s,x)\right|\leq 2^{2q}C_q\left(1+x^{2q-2}\right)\EE\left[\Delta_s(b,c)\left(1+\left(X_s^{b}\right)^{2q-2}+\left(X_s^{c}\right)^{2q-2}\right)\right]
\end{align*}
We remind that $(a+b+c)^2\leq3(a^2+b^2+c^2)$ and $1+x^{2q-2}\leq2(1+x^{2q})$. Then, Cauchy-Schwarz inequality yields
\begin{align}
\label{nadege}
\left|p_1\circ\Gamma\left(b\right)(s,x)-p_1\circ\Gamma\left(c\right)(s,x)\right|\leq&3\times2^{2q+1}C_q\left(1+|x|^{2q}\right)\sqrt{\EE\left[\left(\Delta_s(b,c)\right)^2\right]}\\
&\times\sqrt{1+\widehat{\eta_{4q-4}^{b}}(T)+\widehat{\eta_{4q-4}^{c}}(T)}\,.\nonumber
\end{align}

By using It\^o formula with the function $x\mapsto|x|^2$, we can write
\begin{align*}
\Delta_t(b,c)^2=&-2\int_0^t\left(X_s^{b}-X_s^{c}\right)\left(V_1'\left(X_s^{b}\right)-V_1'\left(X_s^{c}\right)\right)ds\\
&-2\int_0^t\left(X_s^{b}-X_s^{c}\right)\left(b_1\left(s,X_s^{b}\right)-c_1\left(s,X_s^{c}\right)\right)ds\\
&-2\int_0^t\left(X_s^{b}-X_s^{c}\right)\left(b_2\left(s,X_s^{b}\right)-c_2\left(s,X_s^{c}\right)\right)ds\,.
\end{align*}
The first term is less than $2\theta\int_0^t\Delta_s(b,c)^2ds$. Since the functions $b_1$ and $b_2$ are increasing, we deduce that the quantities $\left(X_s^{b}-X_s^{c}\right)\left(b_1\left(s,X_s^{b}\right)-b_1\left(s,X_s^{c}\right)\right)$ and $\left(X_s^{b}-X_s^{c}\right)\left(b_2\left(s,X_s^{b}\right)-b_2\left(s,X_s^{c}\right)\right)$ are nonnegative. This implies
\begin{align*}
&-\int_0^t\left(X_s^{b}-X_s^{c}\right)\left(b_1\left(s,X_s^{b}\right)-c_1\left(s,X_s^{c}\right)\right)ds\\
\leq&\int_0^t\left|X_s^{b}-X_s^{c}\right|\left|b_1\left(s,X_s^{c}\right)-c_1\left(s,X_s^{c}\right)\right|ds\\
\leq&\frac{1}{2}\int_0^s\Delta_t(b,c)^2ds+\frac{1}{2}||b_1-c_1||_T^2\int_0^t\left(1+\left|X_s^{c}\right|^{2q}\right)^2ds\\
\leq&\frac{1}{2}\int_0^s\Delta_t(b,c)^2ds+||b_1-c_1||_T^2\int_0^t\left(1+\left|X_s^{c}\right|^{4q}\right)ds\,.
\end{align*}
In the same way, we have
\begin{align*}
&-\int_0^t\left(X_s^{b}-X_s^{c}\right)\left(b_2\left(s,X_s^{b}\right)-c_2\left(s,X_s^{c}\right)\right)ds\\
\leq&\int_0^t\left|X_s^{b}-X_s^{c}\right|\left|b_2\left(s,X_s^{c}\right)-c_2\left(s,X_s^{c}\right)\right|ds\\
\leq&\frac{1}{2}\int_0^s\Delta_t(b,c)^2ds+\frac{1}{2}||b_2-c_2||_T^2\int_0^t\left(1+\left|X_s^{c}\right|^{2q}\right)^2ds\\
\leq&\frac{1}{2}\int_0^s\Delta_t(b,c)^2ds+||b_2-c_2||_T^2\int_0^t\left(1+\left|X_s^{c}\right|^{4q}\right)ds\,.
\end{align*}

We thus obtain:

\begin{equation*}
\EE\left[\Delta_t(b,c)^2\right]\leq2(\theta+1)\int_0^t\EE\left[\Delta_s(b,c)^2\right]ds+2\left(||b_1-c_1||_T^2+||b_2-c_2||_T^2\right)T\left(1+\widehat{\eta_{4q}^{c}}(T)\right)\,.
\end{equation*}

We apply Gr\"onwall lemma and we get:

\begin{equation*}
\EE\left[\Delta_t(b,c)^2\right]\leq2\left(||b_1-c_1||_T^2+||b_2-c_2||_T^2\right)T\left(1+\widehat{\eta_{4q}^{c}}(T)\right)e^{2(\theta+1)t}\,.
\end{equation*}
As the role of $b$ and $c$ can be inverted, we obtain:

\begin{equation*}
\EE\left[\Delta_t(b,c)^2\right]\leq2\left(||b_1-c_1||_T^2+||b_2-c_2||_T^2\right)T\left(1+\frac{1}{2}\widehat{\eta_{4q}^{b}}(T)+\frac{1}{2}\widehat{\eta_{4q}^{c}}(T)\right)e^{2(\theta+1)t}\,.
\end{equation*}
We combine this with Inequality \eqref{nadege} and we finally have \eqref{lala1} with the function 
\begin{equation*}
C_0'(x,y):=3\times2^{2q+\frac{3}{2}}C_q\sqrt{1+\frac{|x|+|y|}{2}}\sqrt{1+|x|^{\frac{4q-4}{4q}}+|y|^{\frac{4q-4}{4q}}}\,.
\end{equation*}

We obtain Inequalities \eqref{lala2}, \eqref{lala3} and \eqref{lala4} by proceeding similarly.
\end{proof}

As mentioned previously, we will use a fixed point theorem. We already have a continuous map. We will now restrict the space so that the map is a contraction.

\begin{defn}
\label{kaamelott}
Set $K>0$ and $T>0$. We consider
\begin{equation*}
\Lambda_T^K:=\left\{b\in\Lambda_T\,\,:\,\,||b||_T\leq K\right\}\,.
\end{equation*}
We also define $F_T^K:=\Lambda_T^K\times\Lambda_T^K\times\Lambda_T^K\times\Lambda_T^K$.
\end{defn}

\begin{lem}
\label{perceval}
Let $X_0$ and $Y_0$ be two random variables  such that $\EE\left[X_0^{8q^2}\right]<\infty$ and $\EE\left[Y_0^{8q^2}\right]<\infty$. Then, there exist two positive parameter $K$ and $T_0$ such that for any $T<T_0$, we have the two following properties:
\begin{enumerate}
 \item $F_T^K$ is stable by $\Gamma$: $\Gamma F_T^K\subset F_T^K$.
 \item The Lipschitz norm of the restriction of $\Gamma$ on $F_T^K$ is less than $\frac{1}{2}$.
\end{enumerate}
\end{lem}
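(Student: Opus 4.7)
The plan is to choose the radius $K$ first, based only on the initial moments and thus independently of $T$, and then to take $T_0$ small enough for both properties to hold simultaneously. The $8q^2$-moment hypothesis enters only in the contraction step, where Lemma \ref{vin2} requires control of the $4q$-moments of $X^b_t$ and $Y^b_t$, which via Lemma \ref{oumaima} with $n=2q$ in turn requires $\widehat{\eta_{8q^2}^{\rho}}(T)$ and $\widehat{\xi_{8q^2}^{\rho}}(T)$ to be finite.

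For stability, I would start from Lemma \ref{vin}, which yields $\|\Gamma b\|_T^F \leq C_0\bigl(1 + \widehat{\eta_{2q}^{b}}(T) + \widehat{\xi_{2q}^{b}}(T)\bigr)$. For $b \in F_T^K$ the triangle inequality gives $\|b_i - \rho_0\|_T \leq K + \|\rho_0\|_T$, and $\|\rho_0\|_T$ is a finite constant (recall $q \geq 1$). Applying Lemma \ref{oumaima} with $n=q$, and using the $T$-uniform bound on $\widehat{\eta_{4q^2}^{\rho}}(T)$ obtained in Step 1 of its proof, I would extract a decomposition
\begin{equation*}
\widehat{\eta_{2q}^{b}}(T) \leq A + R(K,T),
\end{equation*}
in which $A$ depends only on $\EE[|X_0|^{2q}]$ while $R(K,T) \to 0$ as $T \to 0$ for each fixed $K$, the $K$-dependence being multiplied by $T^{2q}$ or $T$; the same is true for $\widehat{\xi_{2q}^{b}}(T)$. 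I would then set
\begin{equation*}
K := 2 C_0 \bigl(1 + 2A\bigr)
\end{equation*}
and pick $T_1 > 0$ small enough that $C_0\bigl(R(K,T) + R(K,T)\bigr) \leq K/2$ for all $T \leq T_1$, which gives $\Gamma F_T^K \subset F_T^K$.

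For contraction, with $K$ now fixed, I would apply Lemma \ref{oumaima} with $n=2q$; using the hypotheses $\EE[|X_0|^{8q^2}],\EE[|Y_0|^{8q^2}]<\infty$, this yields a uniform constant $M$ with $\widehat{\eta_{4q}^{b}}(T), \widehat{\xi_{4q}^{b}}(T) \leq M$ for every $b \in F_T^K$ and every $T \leq T_1$. Summing the four estimates of Lemma \ref{vin2} then gives
\begin{equation*}
\|\Gamma b - \Gamma c\|_T^F \leq 4\sqrt{T}\, C_0'(M,M)\, \|b-c\|_T^F,
\end{equation*}
so I would take $T_0 := \min\bigl\{T_1,\, 1/(64\, C_0'(M,M)^2)\bigr\}$, ensuring $4\sqrt{T_0}\,C_0'(M,M) \leq 1/2$.

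The main obstacle is the apparent circularity between $K$ and $T$: Lemma \ref{oumaima} produces terms of order $\|b_i - \rho_0\|_T^{2q} \sim K^{2q}$ on the right-hand side, so simply enlarging $K$ cannot by itself force $\|\Gamma b\|_T^F \leq K$. What saves the argument is that these $K^{2q}$ terms always carry a positive power of $T$ as a prefactor; one therefore fixes $K$ from the $T$-independent baseline and only afterwards shrinks $T$. Once this ordering is respected, the $\sqrt{T}$ in Lemma \ref{vin2} makes the contraction property a routine small-time consequence.
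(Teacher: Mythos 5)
Your proposal is correct and follows essentially the same route as the paper: fix $K$ from the $T$-independent part of the stability estimate (Lemmas \ref{vin} and \ref{oumaima} with $n=q$), then shrink $T$ first to preserve $F_T^K$ and then, via Lemma \ref{oumaima} with $n=2q$ feeding into Lemma \ref{vin2}, to obtain the $\tfrac12$-contraction. The paper's Step~1 writes the stability bound as $\|p_1\circ\Gamma(b)\|_T\le C_1+C_2T^{2q}(1+K^{2q})$ and chooses $K\ge 2C_1$, which is the same ordering you articulate; your observation that the $K^{2q}$ terms always carry a positive power of $T$ is exactly what makes both arguments close.
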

\begin{proof}
{\bf Step 1.} From \eqref{chaudron}, we have
\begin{equation*}
\left|\left|p_1\circ\Gamma(b)\right|\right|_T\leq aC\left(1+\widehat{\eta_{2q}^{b}}(T)\right)\,.
\end{equation*}
So, from Lemma \ref{oumaima}, we have
\begin{align*}
\left|\left|p_1\circ\Gamma(b)\right|\right|_T\leq&aC\left(1+k_1(q)\left[T^{2q}+\left(||b_1-\rho_0||_T^{2q}+||b_2-\rho_0||_T^{2q}\right)\left(T^{2q}+\widehat{\eta_{8q^2}^{\rho}}(T)\right)\right]\right)\\
\leq&aC\left(1+k_1(q)\left[T^{2q}+2^{2q}\left(||b_1||_T^{2q}+||b_2||_T^{2q}+2||\rho_0||_T^{2q}\right)\left(T^{2q}+\widehat{\eta_{8q^2}^{\rho}}(T)\right)\right]\right)\\
\leq&aC\left(1+k_1(q)\left[T^{2q}+2^{2q+1}\left(K^{2q}+||\rho_0||_T^{2q}\right)\left(T^{2q}+\widehat{\eta_{8q^2}^{\rho}}(T)\right)\right]\right)\\
\leq&C\left(1+k_1(q)\left[T^{2q}+2^{2q+1}\left(K^{2q}+||\rho_0||_T^{2q}\right)\left(T^{2q}+\widehat{\eta_{8q^2}^{\rho}}(T)\right)\right]\right)\,.
\end{align*}
This can be rewritten as
\begin{equation*}
\left|\left|p_1\circ\Gamma(b)\right|\right|_T\leq C_1+C_2T^{2q}\left(1+K^{2q}\right)\,,
\end{equation*}
where $C_1$ and $C_2$ do not depend on $T$ nor on $K$. We take $K\geq 2C_1$ and $T_0\leq\left(\frac{C_1}{C_2(1+K^{2q})}\right)^{\frac{1}{2q}}$. As a consequence, for any $T<T_0$, we have
$$
\left|\left|p_1\circ\Gamma(b)\right|\right|_T\leq 2C_1\leq K,
$$ 
which proves that $p_1\circ\Gamma(b)\in\Lambda_T^K$. We proceed similarly and we obtain that $p_2\circ\Gamma(b)\in\Lambda_T^K$, $p_3\circ\Gamma(b)\in\Lambda_T^K$ and $p_4\circ\Gamma(b)\in\Lambda_T^K$. Consequently, $\Gamma(b)\in F_T^K$ if $b\in F_T^K$.\\[4pt]
{\bf Step 2.} We will now examine the Lipschitz constant. By making the sum of the inequalities in Lemma \ref{vin2}, we obtain
\begin{equation*}
\left|\left|\Gamma\left(b\right)-\Gamma\left(c\right)\right|\right|_T^F\leq\alpha(T)||b-c||_T^F\,,
\end{equation*}
with $\alpha(T):=\max\left\{2\sqrt{T}C_0'\left(\widehat{\eta_{4q}^{b}}(T),\widehat{\eta_{4q}^{c}}(T)\right);2\sqrt{T}C_0'\left(\widehat{\xi_{4q}^{b}}(T),\widehat{\xi_{4q}^{c}}(T)\right)\right\}$. We choose $T_2$ sufficiently small such that $\alpha(T_2)\leq\frac{1}{2}$. By taking $T:=\min\{T_1;T_2\}$, the Lipschitz norm is less than~$\frac{1}{2}$.
\end{proof}

We point out that $T$ depends on $X_0$ and $Y_0$. This is why we will only be able to construct, in a first time, a solution on a finite time interval.

\begin{prop}
\label{pyramide}
Let $X_0$ and $Y_0$ be two random variables such that $\EE\left[X_0^{8q^2}\right]<\infty$ and $\EE\left[Y_0^{8q^2}\right]<\infty$. Then there exists $T_0>0$ such that for any $T<T_0$, the system of equations \eqref{eq: SDEs} admits a strong solution on the interval $[0;T]$. Moreover, we have
\begin{equation*}
\sup_{0\leq t\leq T}\EE\left\{\left|X_t\right|^{4q}\right\}+\sup_{0\leq t\leq T}\EE\left\{\left|Y_t\right|^{4q}\right\}<\infty\,.
\end{equation*}
\end{prop}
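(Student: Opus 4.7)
The plan is a standard fixed-point argument: apply the Banach fixed-point theorem to $\Gamma$ restricted to $F_T^K$, read off a strong solution of \eqref{eq: SDEs} from its unique fixed point, then derive the claimed $4q$-moment bound from Lemma \ref{oumaima}.

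First I would choose $K$ and $T_0$ as provided by Lemma \ref{perceval}: for every $T < T_0$, the set $F_T^K$ is $\Gamma$-stable and $\Gamma|_{F_T^K}$ is a $\frac{1}{2}$-contraction. The space $F_T$ equipped with $\|\cdot\|_T^F$ is complete (each factor $\Lambda_T$ is a complete normed space, since weighted-uniform convergence preserves local Lipschitzness uniformly in $s$, the monotonicity lower bound, and the weighted sup bound), and $F_T^K$ is closed in it, so Banach's theorem yields a unique $b^\ast = (b_1^\ast,b_2^\ast,b_3^\ast,b_4^\ast) \in F_T^K$ with $\Gamma(b^\ast) = b^\ast$. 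The associated diffusions $X_t^{b^\ast}, Y_t^{b^\ast}$ defined by \eqref{hanae}--\eqref{hanae2} exist and are unique on $[0,T]$ by Proposition \ref{chap2:prop:existence:nonlin} (this is exactly the observation already made right after Definition \ref{huile}).

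The next step is to recognise that $(X_t^{b^\ast}, Y_t^{b^\ast})$ is in fact a solution of the coupled McKean--Vlasov system. Writing $\mu_t$ and $\nu_t$ for the laws of $X_t^{b^\ast}$ and $Y_t^{b^\ast}$, the fixed-point identity $\Gamma(b^\ast) = b^\ast$ unpacks, via the definition of $\Gamma$, to
\begin{equation*}
b_1^\ast(t,x) = a(\nabla F_{11}\ast\mu_t)(x), \qquad b_2^\ast(t,x) = (1-a)(\nabla F_{12}\ast\nu_t)(x),
\end{equation*}
and analogously for $b_3^\ast, b_4^\ast$; substituting back into \eqref{hanae}--\eqref{hanae2} reproduces \eqref{eq: SDEs} verbatim. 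Strong uniqueness follows from uniqueness of the fixed point combined with the strong uniqueness supplied by Proposition \ref{chap2:prop:existence:nonlin}.

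For the moment bound, I would invoke Lemma \ref{oumaima} with $n = 2q$. Since $b^\ast \in F_T^K$, each norm $\|b_i^\ast - \rho_0\|_T$ is bounded by $K + \|\rho_0\|_T$, while the hypothesis $\EE[|X_0|^{8q^2}] + \EE[|Y_0|^{8q^2}] < \infty$ together with Step~1 of Lemma \ref{oumaima} (applied with exponent $8q^2$) gives $\widehat{\eta_{8q^2}^{\rho}}(T) + \widehat{\xi_{8q^2}^{\rho}}(T) < \infty$. Feeding these into the two displayed inequalities of Lemma \ref{oumaima} yields $\widehat{\eta_{4q}^{b^\ast}}(T) + \widehat{\xi_{4q}^{b^\ast}}(T) < \infty$, which is the claim. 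The main obstacle, to the extent there is one here, is the bookkeeping in the identification step: one must confirm that the marginal laws of the solutions of \eqref{hanae}--\eqref{hanae2} at the fixed point really are the same measures $\mu_t,\nu_t$ entering the convolutions inside $\Gamma(b^\ast)$. This is a definition-chasing verification rather than a genuine analytic difficulty; once it is in place, the contraction together with the moment estimates of Lemma \ref{oumaima} does all the remaining work, and the reason the initial datum must have a $8q^2$-moment (rather than a $4q$-moment) is precisely so that Step~1 of Lemma \ref{oumaima} applies at the exponent demanded by the $4q$ estimate in Step~2.
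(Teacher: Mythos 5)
Your high-level plan -- construct the fixed point of $\Gamma$ on $F_T^K$ using the contraction property from Lemma \ref{perceval}, identify the associated diffusions as a solution of \eqref{eq: SDEs} via the definition of $\Gamma$, and extract the $4q$-moment bound from Lemma \ref{oumaima} with $n=2q$ (which is precisely where the $8q^2$-moment assumption enters, through $\widehat{\eta_{4qn}^{\rho}}(T)=\widehat{\eta_{8q^2}^{\rho}}(T)$) -- is the same as the paper's, and the moment step is correct.

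However, there is a genuine gap in the completeness claim. You assert that $\Lambda_T$ (and hence $F_T^K$) is a complete normed space under $\|\cdot\|_T$ because ``weighted-uniform convergence preserves local Lipschitzness uniformly in $s$.'' That is false. Convergence in $\|\cdot\|_T$ is (weighted) uniform convergence, and a uniform limit of locally Lipschitz functions need not be locally Lipschitz: for instance $b_n(x)=\min(\sqrt{|x|},\,n|x|)$ is Lipschitz for each $n$ and converges uniformly to $\sqrt{|x|}$, which is not Lipschitz near the origin. So $\Lambda_T^1$ is not closed under this norm, and you cannot simply invoke the Banach fixed-point theorem on $F_T^K$: the Picard iterates $b_p=\Gamma(b_{p-1})$ form a Cauchy sequence in $\|\cdot\|_T$ and converge to some function $b_\infty$ with $\|b_\infty\|_T\le K$, but a priori nothing tells you $b_\infty$ is locally Lipschitz. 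And you need local Lipschitzness to apply Proposition \ref{chap2:prop:existence:nonlin} to the SDEs driven by $b_\infty$; without it the equations \eqref{hanae}--\eqref{hanae2} are not obviously well-posed and you cannot even make sense of $X^{b_\infty}$.

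The paper's proof closes exactly this hole. It runs the same Picard iteration but then devotes a separate step to showing that the local Lipschitz constants of the iterates $b_{n+1,i}=p_i\circ\Gamma(b_n)$ are bounded on each ball $\{|x|\le N\}$ by a quantity $\psi(N,K,T,\rho)$ independent of $n$ -- this uses the polynomial growth of $\nabla F_{11}$, the uniform bound $\|b_{n,i}\|_T\le K$, and Lemma \ref{oumaima} to control the moments $\widehat{\eta^{b_n}_{2q-1}}(T)$ uniformly in $n$. Uniformly Lipschitz functions do have Lipschitz uniform limits, so $b_\infty$ is locally Lipschitz and everything else goes through. If you insert that uniform-Lipschitz-constant argument (or, equivalently, observe that $b_\infty$ can be recovered as $\Gamma$ applied to the limiting drift once you have established its Lipschitzness), your proof coincides with the paper's.
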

\begin{proof}
{\bf Step 1.} We take $K$ and $T_0$ as defined in Lemma \ref{perceval}. Thus, the Lipschitz norm of the restriction of $\Gamma$ on $F_T^K$ is smaller than $\frac{1}{2}$ for any $T<T_0$.\\[2pt]
We take $b\in F_T^K$. We consider the sequence $\left(b_p\right)_{p\in\mathbb{N}}$ by $b_0:=b$ and $b_{p+1}:=\Gamma\left(b_p\right)$ for any $p\in\mathbb{N}$. We know that $b_p\in F_T^K$ for any $p\in\mathbb{N}$. $\Gamma$ being a contraction, the sequence $\left(b_p\right)_p$ converges to an element $b_\infty\in F_T^K$. This element does not depend on $b$. Moreover, we have $\Gamma\left(b_\infty\right)=b_\infty$. Consequently, $\left(X_t^{b_\infty},Y_t^{b_\infty}\right)_{t\in[0;T]}$ is a strong solution of the system \eqref{eq: SDEs} providing that $b_{\infty,1}$, $b_{\infty,2}$, $b_{\infty,3}$ and $b_{\infty,4}$ are locally Lipschitz (with $(b_{\infty,1},b_{\infty,2},b_{\infty,3},b_{\infty,4})=:b_\infty)$.\\[2pt]
As $b_{n+1}=\Gamma\left(b_n\right)$, then for $|x|\leq N$ and $|y|\leq N$, we have:
\begin{align*}
\left|b_{n+1,1}(t,x)-b_{n+1,1}(t,y)\right|=&a\left|\EE\left[F_{11}'\left(x-X_t^{b_n}\right)-F_{11}'\left(y-X_t^{b_n}\right)\right]\right|\\
\leq&a\EE\left[\left|F_{11}'\left(x-X_t^{b_n}\right)-F_{11}'\left(y-X_t^{b_n}\right)\right|\right]\\
\leq&2^{2q-1}a|x-y|\EE\left[c+|x|^{2q-1}+|y|^{2q-1}+2\left|X_t^{b_n}\right|\right]\\
\leq&k(N)|x-y|\left(1+\widehat{\eta_{2q-1}^{b_n}}(T)\right)\,.
\end{align*}
Since $||b_{n,1}||_T\leq K$ and $||b_{n,2}||_T\leq K$, from Lemma \ref{oumaima}, we deduce:
\begin{equation*}
\left|b_{n+1,1}(t,x)-b_{n+1,1}(t,y)\right|\leq \psi\left(N,K,T,\rho\right)|x-y|\,.
\end{equation*}
By taking the limit as $n$ goes to infinity, we deduce that 
\begin{equation*}
\left|b_{\infty,1}(t,x)-b_{\infty,1}(t,y)\right|\leq \psi\left(N,K,T,\rho\right)|x-y|\,.
\end{equation*}
So $b_{\infty,1}$ is locally Lipschitz. We can do the same reasoning for $b_{\infty,2}$, $b_{\infty,3}$ and $b_{\infty,4}$. Therefore, $\left(X_t^{b_\infty},Y_t^{b_\infty}\right)_{t\in[0;T]}$ is a strong solution of \eqref{eq: SDEs}.\\[4pt]
{\bf Step 2.} According to the assumptions, $\EE\left[X_0^{8q^2}\right]$ and $\EE\left[Y_0^{8q^2}\right]$ are finite. We thus deduce
\begin{equation*}
\sup_{s\in[0;T]}\EE\left[\left|X_s^{\rho}\right|^{8q^2}\right]+\sup_{s\in[0;T]}\EE\left[\left|Y_s^{\rho}\right|^{8q^2}\right]<\infty\,.
\end{equation*}
From Lemma \ref{oumaima}, we have:
\begin{equation*}
\widehat{\eta_{2n}^{b}}(T)\leq k_1(n)\left[T^{2n}+\left(||b_1-\rho_0||_T^{2n}+||b_2-\rho_0||_T^{2n}\right)\left(T^{2n}+\widehat{\eta_{4qn}^{\rho}}(T)\right)\right]\,,
\end{equation*}
and
\begin{equation*}
\widehat{\xi_{2n}^{b}}(T)\leq k_2(n)\left[T^{2n}+\left(||b_3-\rho_0||_T^{2n}+||b_4-\rho_0||_T^{2n}\right)\left(T^{2n}+\widehat{\xi_{4qn}^{\rho}}(T)\right)\right]\,.
\end{equation*}
As $\widehat{\eta_{8q^2}^{\rho}}(T)$ and $\widehat{\xi_{4qn}^{\rho}}(T)$ are finite, we deduce the finiteness of $\widehat{\eta_{2n}^{b_\infty}}(T)$ and of $\widehat{\xi_{2n}^{b_\infty}}(T)$ for any $n$ such that $4qn\leq8q^2$. Consequently, we have $\widehat{\eta_{4q}^{b_\infty}}(T)+\widehat{\xi_{4q}^{b_\infty}}(T)<\infty$.
\end{proof}

Let us point out that the uniqueness of the solution has not been proved for the moment. It will be proved subsequently.

We just obtained the result in finite time. We aim to establish it on the whole set $\bRb_+$. To this end, we will assume that there exists a maximal time such that after this time, there is explosion. We will give a good control of the moments and then extend the solution after the maximal time. Thus we will obtain a contradiction proving that there is no such maximal time.

\begin{lem}
\label{pomme}
Let $X_0$ and $Y_0$ be two random variables  such that $\EE\left[X_0^{2k}\right]<\infty$ and $\EE\left[Y_0^{2k}\right]<\infty$ with $k>q$. Let $T$ be a positive real and $b$ an element of $F_T$. We assume that the function $\Gamma\left(b\right)$ is defined (which is not obvious since we did not assume the finiteness of the $8q^2$-th moment) and that it satisfies $\Gamma\left(b\right)=b$. We put $X_t:=X_t^{b}$ and $Y_t:=Y_t^{b}$ the strong solution of the system starting from $X_0$ with the drift defined by $b$. Then, there exists a function $C''$ such that
\begin{equation*}
\widehat{x_T}:=\sup_{t\in[0;T]}\EE\left\{\left|X_t\right|^{2k}\right\}\leq C''\left(\EE\left[\left|X_0\right|^{2k}\right];\EE\left[\left|Y_0\right|^{2k}\right]\right)\,,
\end{equation*}
and
\begin{equation*}
\widehat{y_T}:=\sup_{t\in[0;T]}\EE\left\{\left|Y_t\right|^{2k}\right\}\leq C''\left(\EE\left[\left|X_0\right|^{2k}\right];\EE\left[\left|Y_0\right|^{2k}\right]\right)\,.
\end{equation*}

\end{lem}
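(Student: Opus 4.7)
The plan is to derive a differential inequality for the total $2k$-th moment $u(t):=\EE[|X_t|^{2k}]+\EE[|Y_t|^{2k}]$ and close it using a threshold argument. Applying It\^o's formula to $\phi(x)=|x|^{2k}$ in \eqref{hanae} (so that $\phi'(X_t)=2k|X_t|^{2k-2}X_t$) and taking expectations, I obtain
\begin{equation*}
\frac{d}{dt}\EE[|X_t|^{2k}] = -2k\EE[|X_t|^{2k-2}X_t\nabla V_1(X_t)] - 2k\EE[|X_t|^{2k-2}X_t\, b_1(t,X_t)] - 2k\EE[|X_t|^{2k-2}X_t\, b_2(t,X_t)] + k(2k-1)\sigma^2\EE[|X_t|^{2k-2}],
\end{equation*}
together with the symmetric identity for $\EE[|Y_t|^{2k}]$. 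Each of the four contributions is estimated in turn.

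The key step is the treatment of the self-interaction term $b_1$ (and symmetrically $b_4$), where the polynomial growth of $\nabla F_{11}$ prevents a naive pointwise bound from closing. Here I would use the fixed-point identity $\Gamma(b)=b$ from Definition~\ref{huile}, which gives $b_1(t,X_t) = a\,\EE[\nabla F_{11}(X_t-\tilde X_t)\mid X_t]$ for an independent copy $\tilde X_t$ of $X_t$. Symmetrizing over the exchangeable pair $(X_t,\tilde X_t)$,
\begin{equation*}
-2k\EE[|X_t|^{2k-2}X_t\,b_1(t,X_t)] = -ka\,\EE\bigl[\bigl(|X_t|^{2k-2}X_t-|\tilde X_t|^{2k-2}\tilde X_t\bigr)\nabla F_{11}(X_t-\tilde X_t)\bigr] \leq 0,
\end{equation*}
because by (H7) both $\nabla F_{11}$ and $x\mapsto|x|^{2k-2}x$ are odd and increasing, so the integrand is pointwise nonnegative.

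The remaining contributions are controlled by the standing hypotheses: assumption (H4) yields $|X_t|^{2k-2}X_t\nabla V_1(X_t)\geq C_4|X_t|^{2k+2}-C_2|X_t|^{2k}$, producing the strongly dissipative term $-2kC_4\EE[|X_t|^{2k+2}]$; the Lipschitz property (H8) of $\nabla F_{12}$ gives $|b_2(t,X_t)|\leq C(1+|X_t|+\EE[|Y_t|])$, and Young's inequality bounds the resulting product by $C(1+\EE[|X_t|^{2k}]+\EE[|Y_t|^{2k}])$; the Brownian correction $k(2k-1)\sigma^2\EE[|X_t|^{2k-2}]$ is absorbed similarly via Young. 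Summing the $X$- and $Y$-estimates and invoking Jensen's inequality in the form $\EE[|X_t|^{2k+2}]+\EE[|Y_t|^{2k+2}]\geq c_k\,u(t)^{(k+1)/k}$, I arrive at
\begin{equation*}
u'(t) \leq C_1 + C_2\, u(t) - C_3\, u(t)^{(k+1)/k},
\end{equation*}
whose right-hand side is negative as soon as $u(t)$ exceeds some threshold $u_\ast$ depending only on $k$ and the structural constants of the potentials. Hence $u(t)\leq\max\{u(0),u_\ast\}$ for all $t\in[0,T]$, yielding the desired function $C''(z_1,z_2):=\max\{z_1+z_2,u_\ast\}$.

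The main obstacle is precisely the self-interaction drift: a direct pointwise estimate $|b_1(t,x)|\leq C(1+|x|^{2q-1}+\EE[|X_t|^{2q-1}])$ would produce a contribution of order $\EE[|X_t|^{2k+2q-2}]$ on the right-hand side, which is not dominated by the confining $\EE[|X_t|^{2k+2}]$ term when $q>2$. The fixed-point identity $\Gamma(b)=b$ is invoked precisely to replace the abstract drift $b_1$ by a genuine McKean--Vlasov self-interaction, so that the oddness and monotonicity in (H7) produce the pointwise sign cancellation above and eliminate this problematic growth altogether.
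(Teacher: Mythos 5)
Your proof is correct, and it captures the same crucial insight that the paper uses: the fixed--point identity $\Gamma(b)=b$ lets you replace the abstract drift $b_1$ by the genuine McKean--Vlasov self-interaction $a\,\EE[\nabla F_{11}(x-\tilde X_t)]$, and then the exchangeability of $(X_t,\tilde X_t)$ together with (H7) (both $\nabla F_{11}$ and $z\mapsto|z|^{2k-2}z$ odd and increasing) kills this term by symmetrization. The paper states this as ``Since $F_{11}$ is convex, it is easy to prove that $\EE[{\rm sign}(X_t)|X_t|^{2k-1}F_{11}'\ast\mu_t(X_t)]\geq 0$'' without spelling out the symmetrization; your argument makes it explicit, which is clearer. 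Your handling of the cross term $b_2$ and of the Brownian correction via Young--Jensen also agrees in substance with the paper's Step 2.3.

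Where you genuinely diverge is the confining term and the final comparison. The paper does \emph{not} use (H4) in this lemma: it splits $\R$ into $[-\tau;\tau]$ and its complement, uses (H5) ($V''(\pm\infty)=+\infty$) to show $g(\tau):=\inf_{|x|>\tau}{\rm sign}(x)V_1'(x)/|x|>0$ can be made arbitrarily large, and thereby extracts a \emph{linear} dissipation $-2kg(\tau)x_t$ that beats the linear growth $2kCx_t$ once $\tau$ is large enough; it then closes with a case distinction ($y_t\leq x_t$ versus $x_t\leq y_t$). You instead feed (H4) directly into $|X_t|^{2k-2}X_tV_1'(X_t)\geq C_4|X_t|^{2k+2}-C_2|X_t|^{2k}$, obtain \emph{superlinear} dissipation $-C_3u(t)^{(k+1)/k}$ via Jensen, and close with a single threshold on the combined quantity $u(t)=\EE[|X_t|^{2k}]+\EE[|Y_t|^{2k}]$. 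Your route is more direct and avoids the auxiliary $\tau$-parameter and the case split; it also gives a cleaner final constant $C''(z_1,z_2)=\max\{z_1+z_2,u_\ast\}$. The trade-off is that your version leans on (H4), while the paper's $\tau$-splitting only requires (H5) and so would survive a weakening of the polynomial lower bound on $xV'(x)$; for the hypotheses as actually stated both are equally available, and in fact the paper already uses (H4) in the analogous computation in Lemma~\ref{oumaima}. Both arguments are sound.
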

\begin{proof}
{\bf Step 1.} We put $x_t:=\EE\left\{\left|X_t\right|^{2k}\right\}$ and $y_t:=\EE\left\{\left|Y_t\right|^{2k}\right\}$. We apply It\^o formula, we take the integration, the expectation then we derive:

\begin{align*}
\frac{d}{dt}x_t=&-2k\EE\left[{\rm sign}\left(X_t\right)\left|X_t\right|^{2k-1}\left(V_1'\left(X_t\right)+F_{11}'\ast\mu_t\left(X_t\right)+F_{12}'\ast\nu_t\left(X_t\right)\right)\right]\\
&+k(2k-1)\sigma^2\EE\left[\left|X_t\right|^{2k-2}\right]\,.
\end{align*}
Since $F_{11}$ is convex, it is easy to prove that $\EE\left[{\rm sign}\left(X_t\right)\left|X_t\right|^{2k-1}F_{11}'\ast\mu_t\left(X_t\right)\right]\geq0$. We deduce
\begin{align*}
\frac{d}{dt}x_t\leq&-2k\EE\left[{\rm sign}\left(X_t\right)\left|X_t\right|^{2k-1}\left(V_1'\left(X_t\right)+F_{12}'\ast\nu_t\left(X_t\right)\right)\right]\\
&+k(2k-1)\sigma^2x_t^{1-\frac{1}{k}}\,.
\end{align*}
{\bf Step 2.} We now prove that the right hand side of the inequality is negative if $x_t$ and $y_t$ are too large. If $V_1$ was convex, the integral term with $V'$ would be easy to control. However, $V$ is not convex. We take $\tau>0$ arbitrarily large and we have:

\begin{align*}
\EE\left[{\rm sign}\left(X_t\right)\left|X_t\right|^{2k-1}V_1'\left(X_t\right)\right]=&\int_{-\tau}^\tau{\rm sign}(x)|x|^{2k-1}V_1'(x)\mu_t(dx)\\
&+\int_{[-\tau;\tau]^c}{\rm sign}(x)|x|^{2k-1}V_1'(x)\mu_t(dx)\,.
\end{align*}
If $\tau$ is large enough, the second integral is positive whilst the first can be negative.\\[2pt]
{\bf Step 2.1.} We begin by the first integral:
\begin{align*}
&\int_{-\tau}^\tau{\rm sign}(x)|x|^{2k-1}V_1'(x)\mu_t(dx)\geq-\int_{-\tau}^\tau|x|^{2k-1}\left|V_1'(x)\right|\mu_t(dx)\\
&\geq-|\tau|^{2k-1}\sup_{x\in[-\tau;\tau]}\left|V_1'(x)\right|=:-f(\tau)\,.
\end{align*}
{\bf Step 2.2.} We now look at the second integral. Since $V''(\pm\infty)=+\infty$, we know that $g(\tau):=\inf_{x\in[-\tau;\tau]^c}{\rm sign}(x)\frac{V_1'(x)}{|x|}>0$ if $\tau$ is large enough. Thus:
\begin{align*}
\int_{[-\tau;\tau]^c}{\rm sign}(x)|x|^{2k-1}V_1'(x)\mu_t(dx)\geq& g(\tau)\int_{[-\tau;\tau]^c}|x|^{2k}\mu_t(dx)\\
\geq& g(\tau)\left(\int_{\bRb}|x|^{2k}\mu_t(dx)-\int_{[-\tau;\tau]}|x|^{2k}\mu_t(dx)\right)\\
\geq& g(\tau)\left(x_t-\tau^{2k}\right)\,.
\end{align*}
{\bf Step 2.3.} We now control the mixed term $-2k\EE\left[{\rm sign}\left(X_t\right)\left|X_t\right|^{2k-1}F_{12}'\ast\nu_t\left(X_t\right)\right]$. We take $\widetilde{Y_t}$ an independent copy of $Y_t$. Then, we have:
\begin{align*}
-2k\EE\left[{\rm sign}\left(X_t\right)\left|X_t\right|^{2k-1}F_{12}'\ast\nu_t\left(X_t\right)\right]=&-2k\EE\left[{\rm sign}\left(X_t\right)\left|X_t\right|^{2k-1}F_{12}'\left(X_t-\widetilde{Y_t}\right)\right]\\
\leq&2k\EE\left[\left|X_t\right|^{2k-1}\left|F_{12}'\left(X_t-\widetilde{Y_t}\right)\right|\right]\,.
\end{align*}
Since $F_{12}'(0)=0$ and $F_{12}'$ is Lipschitz, we obtain:
\begin{align*}
-2k\EE\left[{\rm sign}\left(X_t\right)\left|X_t\right|^{2k-1}F_{12}'\ast\nu_t\left(X_t\right)\right]\leq&2kC\EE\left[\left|X_t\right|^{2k-1}\left|X_t-\widetilde{Y_t}\right|\right]\\
\leq&2kC\left\{\EE\left[\left|X_t\right|^{2k}\right]+\EE\left[\left|X_t\right|^{2k-1}\right]\EE\left[\left|Y_t\right|\right]\right\}\\
\leq&2kCx_t+2kCx_t^{1-\frac{1}{2k}}y_t^{\frac{1}{2k}}\,.
\end{align*}
{\bf Step 3.} We combine the inequalities and we get:
\begin{align*}
\frac{d}{dt}x_t\leq&k(2k-1)\sigma^2x_t^{1-\frac{1}{k}}+2kf(\tau)-2kg(\tau)x_t\\
&+2kg(\tau)\tau^{2k}+2kCx_t+2kCx_t^{1-\frac{1}{2k}}y_t^{\frac{1}{2k}}\,.
\end{align*}

In the same way, we have:
\begin{align*}
\frac{d}{dt}y_t\leq&k(2k-1)\sigma^2y_t^{1-\frac{1}{k}}+2kf(\tau)-2kg(\tau)y_t\\
&+2kg(\tau)\tau^{2k}+2kCy_t+2kCy_t^{1-\frac{1}{2k}}x_t^{\frac{1}{2k}}\,.
\end{align*}

If $y_t\leq x_t$, we thus have
\begin{align*}
\frac{d}{dt}x_t\leq&k(2k-1)\sigma^2x_t^{1-\frac{1}{k}}+2kf(\tau)-2kg(\tau)x_t\\
&+2kg(\tau)\tau^{2k}+4kCx_t\,.
\end{align*}
By taking $\tau$ large enough, $g(\tau)>4C$ so that if $x_t$ is larger than a constant $\chi(\tau)$, $\frac{d}{dt}x_t\leq0$.\\[2pt]
Conversely, if $x_t\leq y_t$, if $y_t$ is larger than $\chi(\tau)$, $\frac{d}{dt}y_t\leq0$. We immediately deduce:
\begin{equation*}
\widehat{x_T}\leq\max\left\{\chi(\tau);\EE\left[\left|X_0\right|^{2k}\right];\EE\left[\left|Y_0\right|^{2k}\right]\right\}
\end{equation*}
and
\begin{equation*}
\widehat{y_T}\leq\max\left\{\chi(\tau);\EE\left[\left|X_0\right|^{2k}\right];\EE\left[\left|Y_0\right|^{2k}\right]\right\}
\end{equation*}

\end{proof}

We are now able to obtain the main theorem

\begin{thm}
\label{netflix}
Set two random variables $X_0$ and $Y_0$ such that $\EE\left[X_0^{2q}\right]<\infty$ and $\EE\left[Y_0^{2q}\right]<\infty$. Then, the system admits a unique strong solution on $\bRb_+$.
\end{thm}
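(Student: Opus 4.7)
The plan is to upgrade the local existence from Proposition \ref{pyramide} to global existence by iterating over time intervals of uniformly bounded length, using the a priori moment bounds provided by Lemma \ref{pomme}, and then to establish pathwise uniqueness via an It\^o--Gr\"onwall argument.

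For existence, I would first invoke Proposition \ref{pyramide} to produce a strong solution on some interval $[0,T_0]$ with $T_0>0$, and define
\[
T^\ast := \sup\bigl\{T>0 : \text{a strong solution of \eqref{eq: SDEs} exists on } [0,T]\bigr\}.
\]
The goal is to show $T^\ast=\infty$. Suppose for contradiction $T^\ast<\infty$. For every $T<T^\ast$ we have a strong solution on $[0,T]$, which is a fixed point of $\Gamma$; applying Lemma \ref{pomme} with $k=4q^2$ (so that $k>q$ and $2k=8q^2$) yields
\[
\sup_{t\in[0,T]}\mathbb{E}\bigl[|X_t|^{8q^2}\bigr]+\sup_{t\in[0,T]}\mathbb{E}\bigl[|Y_t|^{8q^2}\bigr]\leq 2C''\bigl(\mathbb{E}[|X_0|^{8q^2}],\mathbb{E}[|Y_0|^{8q^2}]\bigr)=:M,
\]
with $M$ independent of $T$. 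In particular, for any $T_1<T^\ast$, the pair $(X_{T_1},Y_{T_1})$ has $8q^2$-th moment bounded by $M$.

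Next I would inspect the proof of Lemma \ref{perceval}: the parameters $K$ and $T_0$ there depend on the initial data only through $\widehat{\eta_{8q^2}^{\rho}}(T)$ and $\widehat{\xi_{8q^2}^{\rho}}(T)$, both of which are controlled by $M$ via Step 1 of Lemma \ref{oumaima}. Consequently there is a single $\delta>0$, depending only on $M$, which serves as a lower bound for the local existence window for \emph{any} initial data with $8q^2$-moments at most $M$. Choosing $T_1\in (T^\ast-\delta,T^\ast)$ and applying Proposition \ref{pyramide} with initial data $(X_{T_1},Y_{T_1})$ produces a strong solution on $[T_1,T_1+\delta]$; concatenating with the solution on $[0,T_1]$ gives a strong solution on $[0,T_1+\delta]\supsetneq [0,T^\ast]$, contradicting the maximality of $T^\ast$. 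Hence $T^\ast=\infty$.

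For uniqueness, let $(X,Y)$ and $(\widetilde X,\widetilde Y)$ be two strong solutions driven by the same Brownian pair $(B,\widetilde B)$. Applying It\^o's formula to $u(t):=\mathbb{E}\bigl[|X_t-\widetilde X_t|^2+|Y_t-\widetilde Y_t|^2\bigr]$ and using (i) the one-sided bound (H3) for $\nabla V_1,\nabla V_2$, (ii) the Lipschitz continuity (H8) of $\nabla F_{12},\nabla F_{21}$, and (iii) the independent-copy expansion of the self-interaction terms as in Step 2.3 of Lemma \ref{pomme}, together with the polynomial bound (H7) and the already established global moment control, yields a differential inequality $\tfrac{d}{dt}u(t)\leq K(t)\,u(t)$ with $K\in L^1_{\mathrm{loc}}(\mathbb{R}_+)$. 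Since $u(0)=0$, Gr\"onwall's lemma gives $u\equiv 0$, i.e.\ pathwise uniqueness.

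The main obstacle is the uniformity of the extension time $\delta$: one must verify that the fixed-point radius $K$ and the contraction horizon $T_0$ in Lemma \ref{perceval} can be chosen depending only on the moment bound $M$ and not on $T_1$, so that $\delta$ does not collapse as $T_1\uparrow T^\ast$. This reduces to tracing the dependence of the constants $C_1,C_2$ and the function $\alpha(T)$ in the proof of Lemma \ref{perceval} on $\widehat{\eta_{8q^2}^{\rho}}(T)$, which itself was uniformly controlled in Lemma \ref{oumaima}. Handling the non-Lipschitz, only polynomially-growing self-interactions $\nabla F_{11},\nabla F_{22}$ in the uniqueness estimate is the other delicate point, but this is precisely where the monotonicity of these gradients (from (H7)) cancels the bad terms, exactly as exploited in Lemma \ref{vin2}.
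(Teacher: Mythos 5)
Your existence argument mirrors the paper's: you identify a maximal existence time, use Lemma \ref{pomme} to get moment control uniform in the time horizon (since the constant $C''$ there is $T$-independent), observe that the local existence window in Proposition \ref{pyramide}/Lemma \ref{perceval} depends only on the $8q^2$-moments of the initial data, and extend past the putative blow-up time. Your key observation that $\delta$ can be chosen uniformly over all data with $8q^2$-moments bounded by $M$ is exactly what the paper implicitly invokes when it restarts from $X_{U-T'/2}$ and takes $K'\geq K$.

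Where you genuinely diverge is the uniqueness proof. The paper does \emph{not} run an It\^o--Gr\"onwall comparison between two solutions. Instead, it shows that any strong solution $(\widetilde X,\widetilde Y)$ induces a drift $c=(c_1,\dots,c_4)$ satisfying $\Gamma(c)=c$, that Lemma \ref{pomme} forces $c\in F_T^K$, and then invokes the uniqueness of the fixed point of the contraction $\Gamma$ on $F_T^K$. Uniqueness is thus absorbed into the Picard fixed-point machinery, and the paper bakes it directly into the definition of $U$ (which is the supremum of times on which a \emph{unique} solution exists), so that the extension in Step 2 is pasting a well-defined unique trajectory. Your route proves global existence first and then establishes pathwise uniqueness a posteriori by the $L^2$-comparison argument. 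That argument does go through: the monotonicity of $\nabla F_{11},\nabla F_{22}$ from (H7) kills the diagonal term $-\bigl(\nabla F_{11}\ast\mu_t(X_t)-\nabla F_{11}\ast\mu_t(\widetilde X_t)\bigr)(X_t-\widetilde X_t)\leq 0$; the law-mismatch term $\nabla F_{11}\ast\mu_t(\widetilde X_t)-\nabla F_{11}\ast\widetilde\mu_t(\widetilde X_t)$ is handled by coupling independent copies and using the polynomial local-Lipschitz bound together with the global moment control, producing a linear Gr\"onwall inequality. The trade-off is that your approach is computationally heavier on the nonlinearity (you must carefully bound the measure-dependence of the convolution terms), whereas the paper's fixed-point argument reuses the contraction estimate of Lemma \ref{vin2} wholesale. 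Both are valid; the paper's is arguably cleaner given the infrastructure already built, while yours is more self-contained and transparent about where each hypothesis is used.

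One minor logical point: in your extension step, when you ``concatenate'' the solution on $[0,T_1]$ with the freshly constructed one on $[T_1,T_1+\delta]$, you do not need the two to agree on any overlap (there is none); you only match at $t=T_1$, so this is fine without invoking uniqueness. Also note that both your proof and the paper's actually use the $8q^2$-moment hypothesis (via Proposition \ref{pyramide} and Lemma \ref{pomme} with $k=4q^2$), not the weaker $2q$-moment stated in the theorem; this is a typo in the statement, consistent with Theorem \ref{sandra}.
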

\begin{proof}
{\bf Step 1.} We consider 
\begin{equation*}
U:=\sup\left\{T>0\,\,:\,\,\mbox{(E) admits a unique solution on }[0;T], \sup_{0\leq t\leq T}\EE\left[X_t^{8q^2}\right]+\sup_{0\leq t\leq T}\EE\left[X_t^{8q^2}\right]<\infty\right\}
\end{equation*}

with the convention $\sup\emptyset=0$. We begin to show that $U>0$. By taking $K$ large enough, there exists $T>0$ and a unique $b\in F_T^K$ such that $\Gamma\left(b\right)=b$. Then $\left(X^{b},Y^{b}\right)$ is a strong solution of the system (E) on $[0;T]$. We now consider a solution $(\widetilde{X_t},\widetilde{Y_t})_{t\in[0;T]}$. To this solution, we associate the following drifts
\begin{align*}
&c_1(t,x):=a\EE\left[F_{11}'(x-\widetilde{X_t})\right]\,\,,c_2(t,x):=(1-a)\EE\left[F_{12}'(x-\widetilde{Y_t})\right]\,,\\
&c_3(t,x):=a\EE\left[F_{21}'(x-\widetilde{X_t})\right]\quad\mbox{and}\quad c_4(t,x):=(1-a)\EE\left[F_{22}'(x-\widetilde{Y_t})\right]\,.
\end{align*}
We put $c:=(c_1,c_2,c_3,c_4)$. By the assumptions on $F_{11}$, we obtain:
\begin{equation*}
\frac{\left|c_1(t,x)\right|}{1+x^{2q}}=a\frac{\left|F_{11}'\left(x-\widetilde{X_t}\right)\right|}{1+x^{2q}}\leq C\left(1+\sup_{0\leq t\leq T}\EE\left[\left|\widetilde{X_t}\right|^{2q}\right]\right)\,.
\end{equation*}
In the same way, we have 
\begin{align*}
&\frac{\left|c_2(t,x)\right|}{1+x^{2q}}\leq C\left(1+\sup_{0\leq t\leq T}\EE\left[\left|\widetilde{Y_t}\right|^{2q}\right]\right)\,,\\
&\frac{\left|c_3(t,x)\right|}{1+x^{2q}}\leq C\left(1+\sup_{0\leq t\leq T}\EE\left[\left|\widetilde{X_t}\right|^{2q}\right]\right)\,,\\
\mbox{and}\quad&\frac{\left|c_4(t,x)\right|}{1+x^{2q}}\leq C\left(1+\sup_{0\leq t\leq T}\EE\left[\left|\widetilde{Y_t}\right|^{2q}\right]\right)\,.
\end{align*}
However, according to Lemma \ref{pomme}, the moment at time $t$ of $\widetilde{X_t}$ and the one of $\widetilde{Y_t}$ are bounded by a function which depends on the moments of $X_0$ and $Y_0$.\\[2pt]
If we have a function $c$ such that $\Gamma(c)=c$ then the associated process verifies the equations of Lemma \ref{pomme}. Then, by taking $K$ large enough, we deduce that $\frac{\left|c_1(t,x)\right|}{1+x^{2q}}\leq K$, $\frac{\left|c_2(t,x)\right|}{1+x^{2q}}\leq K$, $\frac{\left|c_3(t,x)\right|}{1+x^{2q}}\leq K$ and $\frac{\left|c_4(t,x)\right|}{1+x^{2q}}\leq K$. This means that $c\in F_T^K$. Consequently, for any random variables $X_0$ and $Y_0$ with $8q^2$th moment finite, by taking $K$ large enough (which depends on the initial moments), we know that a solution of the equation $\Gamma(c)=c$ is in $F_T^K$. However, the equation has a unique solution on $F_T^K$ since the map $\Gamma$ is a contraction on $F_T^K$.\\[2pt]
We immediately deduce that there is a unique system of stochastic differential equations which corresponds to (E). And, there is a unique strong solution to this equation. We thus deduce that $U\geq T>0$.\\[4pt]
{\bf Step 2.} We assume that $U<\infty$. This $U$ does depend on the moments of $X_0$ and $Y_0$. We know that the moments of order $1$ to $8q^2$ of $(X_t,Y_t)$ are bounded by a constant $C_0''$ which depends only on the initial moments. We thus consider the system of equations
\begin{align*}
&X_t'=X_0'-\int_0^t\nabla V_1(X_s')ds-a\int_0^t(\nabla F_{11}\ast\mu_s)(X_s')ds-(1-a)\int_0^t(\nabla F_{12}\ast\nu_s)(X_s')ds+\sigma B_t\,,\\
&Y_t'=Y_0'-\int_0^t\nabla V_2(Y_s')ds-a\int_0^t(\nabla F_{21}\ast\mu_s)(Y_s')ds-(1-a)\int_0^t(\nabla F_{22}\ast\nu_s)(Y_s')ds+\sigma \widetilde{B_t}\,,
\end{align*}
with $X_0'$ and $Y_0'$ such that $\EE\left[\left|X_0'\right|^r\right]\leq C_0''$ and $\EE\left[\left|Y_0'\right|^r\right]\leq C_0''$. We can associate a time $T'>0$ to this equation such that it admits a unique strong solution on $[0;T']$. To the new random variable $X_0'$ is associated a new constant $K'$. Without any change to the generality, we take $K'\geq K$.\\[2pt]
We put $X_0':=X_{U-\frac{T'}{2}}$ and $Y_0':=Y_{U-\frac{T'}{2}}$. These new initial random variables satisfy the conditions so we can define a unique strong solution on $[0;T']$. This implies that we have extended $\left(X_t,Y_t\right)_{t\in[0;U]}$ to $\left(X_t,Y_t\right)_{t\in[0;U+\frac{T'}{2}]}$. Indeed, on $[U-\frac{T'}{2};U[$, there is uniqueness. This contradicts the definition of $U$.

\end{proof}

By using the proof of Theorem \ref{netflix}, we can directly obtain the following result:

\begin{prop}
\label{arthur}
Let $(X,Y)$ be a solution of the system \eqref{eq: SDEs}. Assume that $\EE\left(X_0^{2n}\right)+\EE\left(Y_0^{2n}\right)<\infty$ for some $n\in\mathbb{N}^*$. Then, we have
\begin{equation*}
\sup_{t\geq0}\EE\left(X_t^{2n}\right)+\sup_{t\geq0}\EE\left(Y_t^{2n}\right)<\infty\,.
\end{equation*}
\end{prop}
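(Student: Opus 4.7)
The plan is to revisit the moment control argument that already appears in the proof of Lemma \ref{pomme} and Step 1 of Lemma \ref{oumaima}, and to observe that the bound it produces does not depend on the time horizon $T$. Since Theorem \ref{netflix} guarantees the global existence of $(X_t, Y_t)_{t\geq 0}$, applying this $T$-uniform estimate on each finite interval $[0,T]$ and then letting $T \to \infty$ will deliver the required bound on $\sup_{t\geq 0}\mathbb{E}(X_t^{2n}) + \sup_{t\geq 0}\mathbb{E}(Y_t^{2n})$.

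More concretely, I would set $x_t := \mathbb{E}[|X_t|^{2n}]$ and $y_t := \mathbb{E}[|Y_t|^{2n}]$ and apply It\^{o}'s formula to $|X_t|^{2n}$ and $|Y_t|^{2n}$. The self-interaction term involving $F_{11}'\ast\mu_t$ is nonnegative after taking expectation, because $F_{11}$ is convex (assumption (H7)), and likewise for $F_{22}'\ast\nu_t$; these terms are discarded. The confining drifts are handled as in Step 2 of the proof of Lemma \ref{pomme}: splitting the integral against $\mu_t$ (resp.\ $\nu_t$) into a bounded region $[-\tau,\tau]$ and its complement, using (H4)--(H5) to get a term of the form $-g(\tau)(x_t-\tau^{2n})$ with $g(\tau)\to\infty$. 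Finally, the cross-interaction term is controlled via assumption (H8) (Lipschitz continuity of $\nabla F_{12}$ and $\nabla F_{21}$) together with $F_{12}'(0)=F_{21}'(0)=0$, yielding bounds of the form $Cx_t + C\,x_t^{1-\frac{1}{2n}} y_t^{\frac{1}{2n}}$ and the symmetric version for $y_t$.

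Combining these estimates as at the end of the proof of Lemma \ref{pomme}, I obtain a closed differential system of the form
\begin{align*}
\frac{d}{dt}x_t &\leq n(2n-1)\sigma^2 x_t^{1-\frac{1}{n}} + 2n f(\tau) - 2n g(\tau) x_t + 2n g(\tau)\tau^{2n} + 2nC x_t + 2nC x_t^{1-\frac{1}{2n}} y_t^{\frac{1}{2n}}, \\
\frac{d}{dt}y_t &\leq n(2n-1)\sigma^2 y_t^{1-\frac{1}{n}} + 2n f(\tau) - 2n g(\tau) y_t + 2n g(\tau)\tau^{2n} + 2nC y_t + 2nC y_t^{1-\frac{1}{2n}} x_t^{\frac{1}{2n}}.
\end{align*}
Choosing $\tau$ large enough so that $g(\tau) > 4C$ and examining the cases $y_t\leq x_t$ and $x_t \leq y_t$ separately, one sees that whenever $\max(x_t,y_t)$ exceeds a threshold $\chi(\tau)$ (depending only on $\tau$, $n$, $\sigma$, $C$, and the potentials, but \emph{not} on $t$ or $T$), the derivative of the larger of the two is nonpositive. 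It follows by a standard comparison argument that
\[
\sup_{t\geq 0} x_t + \sup_{t\geq 0} y_t \leq 2\max\Bigl\{\chi(\tau),\, \mathbb{E}[|X_0|^{2n}],\, \mathbb{E}[|Y_0|^{2n}]\Bigr\} < \infty.
\]

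The only subtle point, which I would expect to be the main obstacle, is making sure the It\^{o} calculation is justified and that the integrals on the right-hand side are finite at each time $t$ so that differentiating in $t$ is legitimate. For $n$ small enough that $|\nabla V_1(x)||x|^{2n-1}$ is not automatically integrable under $\mu_t$, one can first localise by stopping times $\tau_R := \inf\{t\geq 0 : |X_t|+|Y_t|\geq R\}$, apply It\^{o} to the stopped processes, take expectations, and then let $R\to\infty$ using the already-known local moment bounds from Lemma \ref{pomme} (invoked with some $k>q$ large enough that $2k\geq 2n$, and if $2n<2q+2$ using Jensen's inequality to deduce the bound on the $2n$-th moment from a higher-order bound). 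Once this is in place, the argument above gives the claimed $T$-independent bound.
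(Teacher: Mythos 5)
Your proposal is correct and takes essentially the same route as the paper, which states this result as an immediate corollary of the proof of Theorem \ref{netflix}: the engine there is the $T$-uniform moment bound of Lemma \ref{pomme}, and you reconstruct exactly that estimate (It\^{o} for $|X_t|^{2n}$, discard the convex self-interaction term after taking expectations, split the confining drift integral over $[-\tau,\tau]$ and its complement to get the $-g(\tau)(x_t-\tau^{2n})$ term, control the cross term via (H8), and close the coupled differential inequality by a comparison argument giving a $T$-independent threshold $\chi(\tau)$). You also rightly flag the integrability/localisation subtlety behind the It\^{o} equality written in Step~1 of the proof of Lemma \ref{pomme}---a point the paper glosses over---and your proposed remedy (stopping times, passage to the limit via the higher-order moment bounds already available on finite horizons, and Jensen's inequality for $n\le q$) is the standard way to make it rigorous.
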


\section{Propagation of chaos}
\label{sec: PoC}
We recall the interacting particle system defined in \eqref{eq: many SDEs}:
\begin{subequations}
\begin{align*}
dX_t^{i}&=-\nabla V_1(X^{i}_t)\,dt-\frac{1}{N_n+M_n}\sum_{j=1}^{N_n}\nabla F_{11}(X^{i}_t-X^{j}_t)\,dt\notag
\\&\qquad-\frac{1}{N_n+M_n}\sum_{k=1}^{M_n}\nabla F_{12}(X^{i}_t-Y^{k}_t)\,dt+\sigma dW^i_t;~~i=1,\ldots, N_n;
\\dY^{i}_t&=-\nabla V_2(Y^{i}_t)\,dt-\frac{1}{N_n+M_n}\sum_{j=1}^{N_n}\nabla F_{21}(Y^{i}_t-X^{j}_t)\,dt\notag
\\&\qquad-\frac{1}{N_n+M_n}\sum_{k=1}^{M_n}\nabla F_{22}(Y^{i}_t-Y^{k}_t)\,dt+\sigma d\widetilde W^i_t,~~i=1,\ldots, M_n;
\end{align*}
\end{subequations}
and its identically independent copies given in \eqref{eq: MV-system}
\begin{equation*}
\begin{cases}
d\widehat{X_t^{i}}=-\nabla V_1(\widehat{X_t^{i}})\,dt-a (\nabla F_{11}\ast\mu_t)(\widehat{X_t^{i}})\,dt-(1-a)(\nabla F_{12}\ast\nu_t)(\widehat{X_t^{i}})\,dt+\sigma dW^i_t
\\ \hspace*{10cm} i=1,\ldots, N_n;
\\ d\widehat{Y_t^{i}}=-\nabla V_2(\widehat{Y_t^{i}})\,dt-a(\nabla F_{21}\ast\mu_t)(\widehat{Y_t^{i}})\,dt-(1-a)(\nabla F_{22}\ast\nu_t)(\widehat{Y_t^{i}})\,dt+\sigma d\widetilde W^i_t,
\\ \hspace*{10cm} i=1,\ldots, M_n,
\end{cases}
\end{equation*}
where $a=\lim\limits_{n\to\infty}\frac{N_n}{N_n+M_n}$ and $\mu_t=\mathrm{Law}(\widehat{X_t^{i}}), ~\nu_t=\mathrm{Law}(\widehat{Y_t^{i}})$.

The rest of this section is devoted to prove Theorem \ref{theo: PoC} that is to show that the interacting particle system satisfies propagation of chaos . We adapt the proof of \cite{Benachour98a,Herrmann02}. In Proposition~\ref{prop: 2nd} we prove a weaker statement than~\eqref{eq: E-sup} where the expectation and the supremum are interchanged. We then  strengthens Proposition~\ref{prop: 2nd} to the fourth power in Proposition \ref{prop: 4th}. Finally, Theorem \ref{theo: PoC} will be derived from these propositions. 

We will need the following lemma on a nonlinear generalisation of Gr\"onwall’s inequality. 
\begin{lemma}
\label{lem: Gronwall}
Let $\phi$  be a positive function such that $\phi(0)=0$. Suppose that there exist constants $A>0, B\geq 0$ and $0\leq\alpha<1$ such that
\[
\phi(t)\leq A\int_0^t \phi(s)\,ds+B\int_0^t\phi(s)^\alpha\,ds,
\]
then
\[
\phi(t)\leq \Big(\frac{B}{A}\big(e^{(1-\alpha)A t}-1\big)\Big)^{\frac{1}{1-\alpha}}.
\]
\end{lemma}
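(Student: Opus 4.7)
The plan is to transform the integral inequality into a differential inequality for an auxiliary function, perturb away from zero to handle the non-Lipschitz term $\phi^\alpha$, reduce to a linear Gr\"onwall inequality by a Bernoulli-type substitution, and then let the perturbation vanish.

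First, I would introduce the right-hand side of the hypothesis as an auxiliary function:
\[
\psi(t):=A\int_0^t\phi(s)\,ds+B\int_0^t\phi(s)^\alpha\,ds,
\]
so that $\phi(t)\le\psi(t)$, $\psi(0)=0$, and (assuming $\phi$ continuous, which is implicit in the statement) $\psi$ is $C^1$ with $\psi'(t)=A\phi(t)+B\phi(t)^\alpha$. Since $A,B\ge 0$ and the map $x\mapsto Ax+Bx^\alpha$ is nondecreasing on $[0,\infty)$, the pointwise bound $\phi\le\psi$ gives the differential inequality
\[
\psi'(t)\le A\psi(t)+B\psi(t)^\alpha.
\]
It then suffices to prove the claimed bound for $\psi$ in place of $\phi$.

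Next, to avoid dividing by zero when $\psi$ vanishes, I would fix $\varepsilon>0$ and set $\psi_\varepsilon:=\psi+\varepsilon$. Then $\psi_\varepsilon$ is strictly positive and still satisfies $\psi_\varepsilon'\le A\psi_\varepsilon+B\psi_\varepsilon^\alpha$. Performing the Bernoulli-type change of variable $u(t):=\psi_\varepsilon(t)^{1-\alpha}$, which is legal because $\psi_\varepsilon>0$, a direct computation yields
\[
u'(t)=(1-\alpha)\psi_\varepsilon(t)^{-\alpha}\psi_\varepsilon'(t)\le (1-\alpha)\bigl(A\,u(t)+B\bigr),
\]
with $u(0)=\varepsilon^{1-\alpha}$. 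This is a linear differential inequality, to which the classical (linear) Gr\"onwall lemma applies, giving
\[
u(t)\le\varepsilon^{1-\alpha}e^{(1-\alpha)At}+\frac{B}{A}\bigl(e^{(1-\alpha)At}-1\bigr).
\]
Letting $\varepsilon\downarrow 0$ yields $\psi(t)^{1-\alpha}\le\tfrac{B}{A}(e^{(1-\alpha)At}-1)$, and since $\phi\le\psi$ and $1-\alpha>0$, raising to the power $1/(1-\alpha)$ gives the desired bound.

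The only genuine subtlety is the non-Lipschitz nature of the nonlinearity $x\mapsto x^\alpha$ at the origin, which prevents a direct ODE comparison with the explicit solution $\bigl(\tfrac{B}{A}(e^{(1-\alpha)At}-1)\bigr)^{1/(1-\alpha)}$ of the associated Bernoulli equation (indeed $\phi\equiv 0$ is also a solution of the limiting ODE from the zero initial datum, so uniqueness fails). The $\varepsilon$-perturbation together with the Bernoulli substitution neatly bypasses this obstacle by reducing everything to the linear Gr\"onwall lemma, where no regularity issue arises.
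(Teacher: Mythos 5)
Your argument is correct, and it is cleaner in one important respect than the proof in the paper. The paper proceeds by writing down the solution $\psi$ of the associated Bernoulli-type ODE $\psi'=A\psi+B\psi^\alpha$, $\psi(0)=0$, computing it explicitly by separation of variables, and then invoking ``a comparison principle'' to conclude $\phi\le\psi$. This is where the real subtlety lies: because $x\mapsto x^\alpha$ is not Lipschitz at the origin, the Cauchy problem with zero initial datum has multiple solutions (including $\psi\equiv 0$), so ``the'' solution must be understood as the maximal one, and the comparison statement is not the textbook version. The paper leaves this unaddressed. Your route sidesteps the issue entirely: after passing to $\psi(t)=A\int_0^t\phi+B\int_0^t\phi^\alpha$ and using the monotonicity of $x\mapsto Ax+Bx^\alpha$ to get $\psi'\le A\psi+B\psi^\alpha$, you perturb to $\psi_\varepsilon=\psi+\varepsilon>0$ (the inequality is preserved, again by monotonicity), make the Bernoulli substitution $u=\psi_\varepsilon^{1-\alpha}$, and land on the \emph{linear} differential inequality $u'\le(1-\alpha)(Au+B)$, to which the classical Gr\"onwall lemma applies without any regularity caveat. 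Letting $\varepsilon\downarrow 0$ then recovers the bound. In effect, you replace the paper's appeal to a nonlinear comparison principle (which needs justification at the degenerate initial point) with a reduction to the linear case where comparison is unproblematic. The explicit formula you obtain matches the paper's: $u(t)\le\varepsilon^{1-\alpha}e^{(1-\alpha)At}+\tfrac{B}{A}(e^{(1-\alpha)At}-1)$ tends to $\tfrac{B}{A}(e^{(1-\alpha)At}-1)$ as $\varepsilon\to 0$, whence $\phi(t)\le\psi(t)\le\bigl(\tfrac{B}{A}(e^{(1-\alpha)At}-1)\bigr)^{1/(1-\alpha)}$. So the two proofs rest on the same Bernoulli-type change of variable, but yours supplies the missing rigor in the comparison step at essentially no extra cost.
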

\begin{proof}
We note that a special case of this lemma for $\alpha=\frac{1}{2}$ has appeared in \cite[Lemma 2]{Herrmann02} while a more general version where $A$ and $B$ are functions of $s$ can be found in \cite[Theorem 21]{Dragomir03book}. For the convenience of the reader we provide a simplified proof for the case of constant coefficients and arbitrary $\alpha$ here. Suppose $\psi$ solve the integral equation
\[
\psi(t)=A \int_0^t \psi(s)\,ds+B\int_0^t\psi(s)^\alpha\,ds\quad \psi(0)=0.
\]
We take the derivative with respect to $t$ both sides to obtain
\[
\frac{d\psi(t)}{A\psi(t)+B\psi(t)^\alpha}=dt,\quad\psi(0)=0.
\] 
Taking the anti-derivative of this ODE gives
\[
\frac{1}{A(\alpha-1)}\log\bigg[\frac{\psi(t)^\alpha}{A\psi(t)+B\psi(t)^\alpha}\bigg]=t+C.
\] 
Solving this equation with the initial data $\psi(0)=0$ we obtain
\[
\psi(t)=\bigg(\frac{B}{A}\Big(e^{A(1-\alpha)t}-1\Big)\bigg)^\frac{1}{1-\alpha}.
\]
A comparison principle gives $\phi(t)\leq \psi(t)$, which is the assertion of the lemma. 
\end{proof}
%\begin{lemma}
%\label{lem: moments}
%Let $(X^i,Y^i)$ be a solution of the system~\eqref{eq: MV-system}. Suppose that $\E(|X^i_0|^{2n})<\infty$ and $\E(|Y^i_0|^{2n})<\infty$. Then
%it holds that
%\begin{equation}
%\label{eq: moments bound}
%\sup_{t\in[0,T]}\E(|X^1_t|^{2n})<\infty\quad\text{and}\quad \sup_{t\in[0,T]}\E(|X^2_t|^{2n})<\infty.
%\end{equation}
%\end{lemma}
%
%\begin{proof}
%See Proposition \ref{arthur} and proof of Theorem \ref{netflix}.
%\end{proof}%

\begin{proposition}
\label{prop: 2nd}
We have
\begin{equation}
\label{eq: 2nd}
\lim\limits_{n\to\infty}\sup\limits_{t\in[0,T]}\E\Big[\big(X_t^{i}-\widehat{X_t^{i}}\big)^2\Big]=0\quad\text{and}\quad \lim\limits_{n\to\infty}\sup\limits_{t\in[0,T]}\E\Big[\big(Y_t^{i}-\widehat{Y_t^{i}}\big)^2\Big]=0.
\end{equation}
\end{proposition}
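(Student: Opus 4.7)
I propose to use synchronous coupling. Drive the copies $(\widehat{X}^i,\widehat{Y}^i)$ in \eqref{eq: MV-system} with the same Brownian motions $(W^i,\widetilde W^i)$ as in \eqref{eq: many SDEs}; then $Z_t^i:=X_t^i-\widehat{X_t^i}$ and $\widetilde Z_t^i:=Y_t^i-\widehat{Y_t^i}$ have bounded variation, so $d|Z_t^i|^2$ reduces to the chain rule (no It\^o correction). By the exchangeability of the $X$-particles (resp.\ $Y$-particles), $\phi_1(t):=\E|Z_t^i|^2$ and $\phi_2(t):=\E|\widetilde Z_t^i|^2$ depend only on $t$, and the aim is to derive an integral inequality amenable to the nonlinear Gr\"onwall Lemma~\ref{lem: Gronwall}.

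Four groups of drift terms need to be treated. The $\nabla V_1,\nabla V_2$ contributions give $2\theta_1\phi_1(t)$ (resp.\ $2\theta_2\phi_2(t)$) directly from (H3). The cross-interaction terms are straightforward: since $\nabla F_{12},\nabla F_{21}$ are globally Lipschitz by (H8), adding and subtracting $\nabla F_{12}(\widehat{X_t^i}-\widehat{Y_t^k})$ and applying Cauchy--Schwarz with Young's inequality yields a contribution bounded by $C(\phi_1(t)+\phi_2(t))$ plus a mean-field residual $R^{12}_{n,i}(t)$. The self-interaction is the delicate step, since (H7) only provides polynomial growth of degree $2q-1$ for $\nabla F_{11},\nabla F_{22}$, precluding a Lipschitz approach.

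The key device here is symmetrisation using the oddness and monotonicity in (H7). Decomposing
\[
a(\nabla F_{11}*\mu_t)(\widehat{X_t^i})-\tfrac{1}{N_n+M_n}\sum_{j=1}^{N_n}\nabla F_{11}(X_t^i-X_t^j)=-\tfrac{1}{N_n+M_n}\sum_{j=1}^{N_n}\Delta_{ij}(t)+R^{11}_{n,i}(t),
\]
where $\Delta_{ij}(t):=\nabla F_{11}(X_t^i-X_t^j)-\nabla F_{11}(\widehat{X_t^i}-\widehat{X_t^j})$, a swap of indices $i\leftrightarrow j$ combined with oddness gives $\E[Z_t^i\Delta_{ij}(t)]=\tfrac12\E[(Z_t^i-Z_t^j)\Delta_{ij}(t)]\geq 0$ by monotonicity, so the $\Delta$-piece has the favourable sign and can be discarded. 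For the residual $R^{11}_{n,i}$, note that $\nabla F_{11}(0)=0$ kills the diagonal $j=i$, and that conditionally on $\widehat{X_t^i}$ the terms $\nabla F_{11}(\widehat{X_t^i}-\widehat{X_t^j})$, $j\neq i$, are i.i.d.\ with mean $(\nabla F_{11}*\mu_t)(\widehat{X_t^i})$. The polynomial bound $|\nabla F_{11}(x)|\leq C(1+|x|^{2q-1})$ together with the uniform moment control from Proposition~\ref{arthur} (applicable under the $8q^2$-moment hypothesis, since $4q-2\leq 8q^2$) then yields
\[
\sup_{t\in[0,T]}\E[R^{11}_{n,i}(t)^2]\leq C\Big[\tfrac{1}{N_n+M_n}+\Big(\tfrac{N_n-1}{N_n+M_n}-a\Big)^2\Big]=:\varepsilon_n\to 0,
\]
and analogous bounds hold for $R^{12}_{n,i}$ and for the two residuals arising in the $Y$-equation.

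Applying Cauchy--Schwarz to the cross term, $\E[|Z_t^i|\,|R^{11}_{n,i}(t)|]\leq\phi_1(t)^{1/2}\varepsilon_n^{1/2}$, and combining with the parallel estimate for $\phi_2$, we obtain
\[
\phi_1(t)+\phi_2(t)\leq A\int_0^t(\phi_1(s)+\phi_2(s))\,ds+B\,\varepsilon_n^{1/2}\int_0^t(\phi_1(s)+\phi_2(s))^{1/2}\,ds,
\]
with constants $A,B$ independent of $n$. Lemma~\ref{lem: Gronwall} with $\alpha=\tfrac12$ then gives $\phi_1(t)+\phi_2(t)\leq\bigl(\tfrac{B}{A}\varepsilon_n^{1/2}(e^{At}-1)\bigr)^{2}$, which tends to $0$ uniformly on $[0,T]$, establishing \eqref{eq: 2nd}. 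The principal obstruction throughout is the absence of a global Lipschitz bound on $\nabla F_{11},\nabla F_{22}$; the monotonicity in (H7) and the index-swap symmetrisation are what neutralise it.
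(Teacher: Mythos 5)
Your proof is correct and follows essentially the same route as the paper: synchronous coupling with no It\^o correction, the $(\nabla V_1,\nabla V_2)$ contribution controlled by (H3), the self-interaction split into a signed symmetrised part (oddness $+$ monotonicity of $\nabla F_{11},\nabla F_{22}$) and an i.i.d.\ mean-field fluctuation of order $O((N_n+M_n)^{-1/2})$, the cross-interaction handled via the global Lipschitz bound of (H8), and finally the nonlinear Gr\"onwall Lemma~\ref{lem: Gronwall} with $\alpha=\tfrac12$. The only cosmetic difference is bookkeeping: you fold the $\bigl|a-\tfrac{N_n}{N_n+M_n}\bigr|$ and $\bigl|(1-a)-\tfrac{M_n}{N_n+M_n}\bigr|$ discrepancies into your residuals $R^{11}_{n,i},R^{12}_{n,i}$, whereas the paper treats them as separate terms $C_i,E_i$; the estimates and conclusion are the same.
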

\begin{proof}
We define
\begin{equation}
\omega(t):=\E\left[\big(X_t^{1}-\widehat{X_t^{1}}\big)^2\right]\quad\text{and}\quad\widehat{\omega}(t):=\E\left[\big(Y_t^{1}-\widehat{Y_t^{1}}\big)^2\right]\,.
\end{equation}
We have
\begin{equation}
\omega(t)=\E\left[\big(X_t^{i}-\widehat{X_t^{i}}\big)^2\right]~~\forall i=1,\ldots, N_n \quad\text{and}\quad\widehat{\omega}(t)=\E\left[\big(Y_t^{i}-\widehat{Y_t^{i}}\big)^2\right]~~\forall i=1,\ldots, M_n.
\end{equation}
Using the It\^{o} formula, we compute
\begin{align}
\label{eq: est1}
\omega(t)&=\E\left[\big(X_t^{i}-\widehat{X_t^{i}}\big)^2\right]\notag
\\&=-2\E\int_0^t (\nabla V_1(X_s^{i})-\nabla V_1(\widehat{X_s^{i}}))\cdot (X_s^{i}-\widehat{X_s^{i}}) \,ds\notag
\\&\qquad-2\E\int_0^t \Big[\frac{1}{N_n+M_n}\sum_{j=1}^{N_n}\nabla F_{11}(X_s^{i}-X_s^{j})-a(\nabla F_{11}\ast\mu_s)(\widehat{X_s^{i}})\Big]\cdot (X_s^{i}-\widehat{X_s^{i}}) \,ds\notag
\\&\qquad-2\E\int_0^t \Big[\frac{1}{N_n+M_n}\sum_{k=1}^{M_n}\nabla F_{12}(X_s^{i}-Y_s^{k})-(1-a)(\nabla F_{12}\ast\mu_s)(\widehat{X_s^{i}})\Big]\cdot (X_s^{i}-\widehat{X_s^{i}}) \,ds\notag
\\&=2\int_0^t \E\big(A_i(s)+B_i(s)+C_i(s)+D_i(s)+E_i(s)\big)\,ds,
\end{align}
where
\begin{align*}
A_i(t)&=-\Big(\nabla V_1(X_t^{i})-\nabla V_1(\widehat{X_t^{i}})\Big)\cdot(X_t^{i}-\widehat{X_t^{i}}),\\
B_i(t)&=\bigg[-\frac{1}{N_n+M_n}\sum_{j=1}^{N_n}\Big(\nabla F_{11}(X_t^{i}-X_t^{j})-(\nabla F_{11}\ast\mu_t)(\widehat{X_t^{i}})\Big)\bigg]\cdot (X_{t}^{i}-\widehat{X_t^{i}}),
\\C_i(t)&=\Big(a-\frac{N_n}{N_n+M_n}\Big)(\nabla F_{11}\ast \mu_t)(\widehat{X_t^{i}})\cdot(X_t^{i}-\widehat{X_t^{i}}),
\\D_i(t)&=\bigg[-\frac{1}{N_n+M_n}\sum_{k=1}^{M_n}\Big(\nabla F_{12}(X_t^{i}-Y_t^{k})-(\nabla F_{12}\ast\nu_t)(\widehat{X_t^{i}})\Big)\bigg]\cdot (X_{t}^{i}-\widehat{X_t^{i}}),
\\E_i(t)&=\Big((1-a)-\frac{M_n}{N_n+M_n}\Big)(\nabla F_{12}\ast\nu_t)(\widehat{X_t^{i}})\cdot(X_t^{i}-\widehat{X_t^{i}}).
\end{align*}
Next we estimate each term in \eqref{eq: est1}. We start with $A_i$:
\begin{align*}
A_i(t)&=-\Big(\nabla V_1(X_t^{i})-\nabla V_1(\widehat{X_t^{i}})\Big)\cdot(X_t^{i}-\widehat{X_t^{i}})
\\&\overset{\eqref{eq: V1}}{\leq}\theta_1\Big(X_t^{i}-\widehat{X_t^{i}}\Big)^2.
\end{align*} 
This implies that 
\begin{equation}
\label{eq: Ai}
\sum_{i=1}^{N_n}\E\left[A_i(t)\right]\leq N_n\theta_1 \omega(t).
\end{equation}
Next we estimate $B_i$:
\begin{align}
\label{eq: Bi}
B_i(t)&=\bigg[-\frac{1}{N_n+M_n}\sum_{j=1}^{N_n}\Big(\nabla F_{11}(X_t^{i}-X_t^{j})-(\nabla F_{11}\ast\mu_t)(\widehat{X_t^{i}})\Big)\bigg]\cdot (X_{t}^{i}-\widehat{X_t^{i}})\notag
\\&=\bigg[-\frac{1}{N_n+M_n}\sum_{j=1}^{N_n}\Big(\nabla F_{11}(X_t^{i}-X_t^{j})-\nabla F_{11}(\widehat{X_t^{i}}-\widehat{X_t^{j}})\Big)\bigg]\cdot (X_{t}^{i}-\widehat{X_t^{i}})\notag
\\&\qquad-\bigg[\frac{1}{N_n+M_n}\sum_{j=1}^{N_n}\Big(\nabla F_{11}(\widehat{X_t^{i}}-\widehat{X_t^{j}})-(\nabla F_{11}\ast\mu_t)(\widehat{X_t^{i}})\Big)\bigg]\cdot (X_{t}^{i}-\widehat{X_t^{i}})\notag
\\&=-\frac{1}{N_n+M_n}\sum_{j=1}^{N_n}\varrho^1_{ij}(t)-\frac{1}{N_n+M_n}\sum_{j=1}^{N_n}\varrho^2_{ij}(t),
\end{align}
where
\begin{subequations}
\label{eq: rhoij}
\begin{align}
\varrho^1_{ij}(t)&:=\Big(\nabla F_{11}(X_t^{i}-X_t^{j})-\nabla F_{11}(\widehat{X_t^{i}}-\widehat{X_t^{j}})\Big)\cdot (X_{t}^{i}-\widehat{X_t^{i}}),
\\\varrho^2_{ij}(t)&:=\Big(\nabla F_{11}(\widehat{X_t^{i}}-\widehat{X_t^{j}})-(\nabla F_{11}\ast\mu_t)(\widehat{X_t^{i}})\Big)\cdot (X_{t}^{i}-\widehat{X_t^{i}}).
\end{align}
\end{subequations}
We have
\begin{align*}
\sum_{i=1}^{N_n}\sum_{j=1}^{N_n}\varrho^{1}_{ij}(t)=\sum_{1\leq i<j\leq N_n}\varrho^3_{ij}(t),
\end{align*}
where $\varrho^3_{ij}(t)=\varrho^1_{ij}(t)+\varrho^1_{ji}(t)$. Since $\nabla F_{11}$ is an odd function, we have
\begin{align*}
\varrho^3_{ij}(t)&=\Big(\nabla F_{11}(X_t^{i}-X_t^{j})-\nabla F_{11}(\widehat{X_t^{i}}-\widehat{X_t^{j}})\Big)\cdot (X_{t}^{i}-\widehat{X_t^{i}})
\\&\qquad+\Big(\nabla F_{11}(X_t^{j}-X_t^{i})-\nabla F_{11}(\widehat{X_t^{j}}-\widehat{X_t^{i}})\Big)\cdot (X_{t}^{j}-\widehat{X_t^{j}})
\\&=\Big(\nabla F_{11}(X_t^{i}-X_t^{j})-\nabla F_{11}(\widehat{X_t^{i}}-\widehat{X_t^{j}})\Big)\cdot \Big((X_{t}^{i}-\widehat{X_t^{i}})-(X_{t}^{j}-\widehat{X_t^{j}})\Big).
\end{align*}
If $X_t^{i}-X_t^{j}\geq \widehat{X_t^{i}}-\widehat{X_t^{j}}$ (resp. $X_t^{i}-X_t^{j}\leq \widehat{X_t^{i}}-\widehat{X_t^{j}}$) then $X_t^{i}-\widehat{X_t^{i}}\geq X_t^{j}-\widehat{X_t^{j}}$ (resp. $X_t^{i}-\widehat{X_t^{i}}\leq X_t^{j}-\widehat{X_t^{j}}$) and $\nabla F_{11}(X_t^{i}-X_t^{j})\geq \nabla F_{11}(\widehat{X_t^{i}}-\widehat{X_t^{j}})$ (resp. $\nabla F_{11}(X_t^{i}-X_t^{j})\leq \nabla F_{11}(\widehat{X_t^{i}}-\widehat{X_t^{j}})$ ) as $\nabla F_{11}$ is increasing. Thus we always have $\varrho^3_{ij}(t)\geq 0$. Therefore, 
\begin{equation}
\label{eq: varrho1}
\sum_{i,j=1}^{N_n}\varrho^1_{ij}(t)\geq 0.
\end{equation}
On the other hand, using Cauchy-Schwarz inequality, we get
\begin{equation}
\label{eq:sumvarrho2}
\E\Big(\sum_{j=1}^{N_n}\varrho^2_{ij}(t)\Big)\leq \Big(\E\big((X_{t}^{i}-\widehat{X_t^{i}})^2 \big)\kappa_i(t)\Big)^{\frac{1}{2}},
\end{equation}
where
\begin{align*}
\kappa_i(t)=\E\Bigg(\Big[\sum_{j=1}^{N_n}\big(\nabla F_{11}(\widehat{X_t^{i}}-\widehat{X_t^{j}})-(\nabla F_{11}\ast\mu_t)(\widehat{X_t^{i}})\big)\Big]^2\Bigg).
\end{align*}
We rewrite $\kappa_i$ as 
\begin{align*}
&\kappa_i(t)=\sum_{j=1}^{N_n}\xi_{j,j}(t)+\sum_{1\leq j<k\leq N_n}\xi_{j,k}(t)\quad\text{with}
\\& \xi_{j,k}(t)=\E\Big(\big[\nabla F_{11}(\widehat{X_t^{i}}-\widehat{X_t^{j}})-(\nabla F_{11}\ast\mu_t)(\widehat{X_t^{i}})\big]\big[\nabla F_{11}(\widehat{X_t^{i}}-\widehat{X_t^{k}})-(\nabla F_{11}\ast\mu_t)(\widehat{X_t^{i}})\big]\Big).
\end{align*}
If $j\neq k$, $\widehat{X^{i}}, \widehat{X^{j}}$ and $\widehat{X^{k}}$ are three independent copies of $\widehat{X^{1}}$. This implies that
\begin{align*}
\xi_{j,k}&=\E_{\widehat{X^{i}}}\Big(\E_{\widehat{X^{j}}}\big[\nabla F_{11}(\widehat{X_t^{i}}-\widehat{X_t^{j}})-(\nabla F_{11}\ast\mu_t)(\widehat{X_t^{i}})\big]\E_{\widehat{X^{k}}}\big[\nabla F_{11}(\widehat{X_t^{i}}-\widehat{X_t^{k}})-(\nabla F_{11}\ast\mu_t)(\widehat{X_t^{i}})\big]\Big)
\\&=\E_{\widehat{X^{i}}}[0]=0,
\end{align*}
where we have used the fact that $\widehat{X^{i}}, \widehat{X^{j}}$ and $\widehat{X^{k}}$ have the same law $\mu_t$.
 For $j=k$, we get
\begin{align}
\xi_{j,j}(t)&=\E\Big(\big[\nabla F_{11}(\widehat{X_t^{i}}-\widehat{X_t^{j}})-(\nabla F_{11}\ast\mu_t)(\widehat{X_t^{i}})\big]^2\Big)\notag
\\&\leq 2\E\Big(|\nabla F_{11}(\widehat{X_t^{i}}-\widehat{X_t^{j}})|^2+|(\nabla F_{11}\ast\mu_t)(\widehat{X_t^{i}})|^2\Big).
\end{align}
Since $|\nabla F_{11}(x)|\leq C(1+|x|^{2q})$, we have $|\nabla F_{11}(x)|^2\leq C(1+|x|^{4q})$. Applying this inequality we obtain %\textcolor{red}{should shorten this argument using the identity $(\nabla F_{11}\ast\mu_t)(\widehat{X_t^{i}})=\E\nabla F_{11}(\widehat{X_t^{i}}-X_t^{i})$}
\begin{align}
\label{eq: bound convol term}
&\E|\nabla F_{11}(\widehat{X_t^{i}}-\widehat{X_t^{j}})|^2\leq C\E(1+|\widehat{X_t^{i}}-\widehat{X_t^{j}}|^{4q})\leq C\E(1+|\widehat{X_t^{i}}|^{4q}+|\widehat{X_t^{j}}|^{4q})\leq C,\notag
\\& \E(|(\nabla F_{11}\ast\mu_t)(\widehat{X_t^{i}})|^2)=\E\Big(\Big|\int \nabla F_{11}(\widehat{X_t^{i}}-y)\mu_t(y)\,dy\Big|^2\Big)\leq \E\Big(\int |\nabla F_{11}(\widehat{X_t^{i}}-y)|^2\mu_t(dy)\Big)\notag
\\&\hspace*{3.5cm}\leq C \E\Big(\int (1+|\widehat{X_t^{i}}|^{4q}+|y|^{4q})\mu_t(y)\,dy\Big)\leq C\E\Big(1+|\widehat{X_t^{i}}|^{4q}\Big)\leq C.
\end{align}
Therefore, we obtain 
$$
\kappa_i(t)=\sum_{j=1}^{N_n}\xi_{j,j}(t)\leq CN_n
$$
Substituting this estimate back into \eqref{eq:sumvarrho2} gives 
\begin{equation}
\E\Big(\sum_{j=1}^{N_n}\rho^2_{ij}(t)\Big)\leq CN_n^{\frac{1}{2}}\Big(\E(X_t^{i}-\widehat{X_t^{i}})^2\Big)^{\frac{1}{2}}.
\label{eq: rho2ij}
\end{equation}
Substituting \eqref{eq: rho2ij} and \eqref{eq: varrho1} back into \eqref{eq: Bi}, we achieve
\begin{equation}
\label{eq: Bi-2}
\sum_{i=1}^{N_n}\E\left[B_i(t)\right]\leq \frac{C N_n^{3/2}}{N_n+M_n}\Big(\E(X_t^{i}-\widehat{X_t^{i}})^2\Big)^{\frac{1}{2}}=\frac{C N_n^{3/2}}{N_n+M_n}\sqrt{\omega(t)}.
\end{equation}
We proceed with estimating $C_i$. Using Cauchy-Schwarz inequality we get
\begin{align*}
|\E\left[C_i(t)\right]|&=\Big|a-\frac{N_n}{N_n+M_n}\Big|\E (\nabla F_{11}\ast\mu_t)(\widehat{X_t^{i}})(X_t^{i}-\widehat{X_t^{i}})|
\\&\leq \Big|a-\frac{N_n}{N_n+M_n}\Big| \Big(\E\big[X_t^{i}-\widehat{X_t^{i}}\big]^2\Big)^{\frac{1}{2}}\Big(\E|(\nabla F_{11}\ast\mu_t)(\widehat{X_t^{i}})|^2\Big)^{\frac{1}{2}}
\\&\leq C \Big|a-\frac{N_n}{N_n+M_n}\Big|\Big(\E\big[X_t^{i}-\widehat{X_t^{i}}\big]^2\Big)^{\frac{1}{2}},
\end{align*}
where we have used $\E|(\nabla F_{11}\ast\mu_t)(\widehat{X_t^{i}})|^2\leq C$ which was proved in \eqref{eq: bound convol term}. Therefore,
\begin{equation}
\label{eq: C_i}
\sum_{i=1}^{N_n}\E C_i\leq CN_n \Big|a-\frac{N_n}{N_n+M_n}\Big|\Big(\E\big[X_t^{i}-\widehat{X_t^{i}}\big]^2\Big)^{\frac{1}{2}}= CN_n \Big|a-\frac{N_n}{N_n+M_n}\Big|\sqrt{\omega(t)}.
\end{equation}
Now we estimate $D_i$. This is a cross term that involves both species and we will need to use assumptions on the Lipschitz property of $F_{12}$. We   first add and subtract appropriate terms similarly as in $B_i$.
\begin{align}
\label{eq: Di}
D_i(t)&=\bigg[-\frac{1}{N_n+M_n}\sum_{k=1}^{M_n}\Big(\nabla F_{12}(X_t^{i}-Y_t^{k})-(\nabla F_{12}\ast\nu_t)(\widehat{X_t^{i}})\Big)\bigg]\cdot (X_{t}^{i}-\widehat{X_t^{i}})\notag
\\&=\bigg[-\frac{1}{N_n+M_n}\sum_{k=1}^{M_n}\Big(\nabla F_{12}(X_t^{i}-Y_t^{k})-\nabla F_{12}(\widehat{X_t^{i}}-Y_t^{k})\Big)\bigg]\cdot (X_{t}^{i}-\widehat{X_t^{i}})\notag
\\&\qquad-\bigg[\frac{1}{N_n+M_n}\sum_{k=1}^{M_n}\Big(\nabla F_{12}(\widehat{X_t^{i}}-Y_t^{k})-(\nabla F_{12}\ast\nu_t)(\widehat{X_t^{i}})\Big)\bigg]\cdot (X_{t}^{i}-\widehat{X_t^{i}})\notag
\\&=: D_i^1(t)+D_i^2(t).
\end{align}
Using Lipschitzian property of $\nabla F_{12}$ we have
\[
|\nabla F_{12}(X_t^{i}-Y_t^{k})-\nabla F_{12}(\widehat{X_t^{i}}-Y_t^{k})|\leq K|X_t^{i}-\widehat{X_t^{i}}|,
\] 
which implies that
\begin{equation}
\label{eq: D1i}
\E\left[D^1_i(t)\right]\leq \frac{CM_n}{N_n+M_n}\E \big(X_t^{i}-\widehat{X_t^{i}}\big)^2.
\end{equation}
Taking the expectation of $D^2_i(t)$, noting that $\nu_t=\mathrm{Law}(\widehat{Y_t^{k}})$, we obtain
\begin{align*}
\E\left[D_i^2(t)\right]=\E\bigg[-\frac{1}{N_n+M_n}\sum_{k=1}^{M_n}\Big(\nabla F_{12}(\widehat{X_t^{i}}-Y_t^{k})-(\nabla F_{12}(\widehat{X_t^{i}}-\widehat{Y_t^{k}})\Big)\bigg]\cdot (X_{t}^{i}-\widehat{X_t^{i}}).
\end{align*}
Then similarly as in $D^1_i(t)$, we have
\begin{align*}
\E\left[D^2_i(t)\right] \leq \frac{C}{N_n+M_n}\sum_{k=1}^{M_n}\E\Big(|Y_t^{k}-\widehat{Y_t^{k}}||X_t^{i}-\widehat{X_t^{i}}|\Big),
\end{align*}
which implies that
\begin{align}
\label{eq: D2i}
\sum_{i=1}^{N_n}\E\left[D^2_i(t)\right]&\leq \frac{C}{N_n+M_n}\E\Big(\sum_{i=1}^{N_n}|X_t^{i}-\widehat{X_t^{i}}|\sum_{k=1}^{M_n}|Y_t^{k}-\widehat{Y_t^{k}}|\Big)
\notag
\\&\leq \frac{C\sqrt{N_n}\sqrt{M_n}}{N_n+M_n}\Big(\sum_{i=1}^{N_n}\E\big(X_t^{i}-\widehat{X_t^{i}}\big)^2\Big)^\frac{1}{2}\Big(\sum_{k=1}^{M_n}\E\big(Y_t^{k}-\widehat{Y_t^{k}}\big)^2\Big)^\frac{1}{2}\notag
\\&=\frac{CM_n N_n}{N_n+M_n}\Big(\E\big(X_t^{i}-\widehat{X_t^{i}}\big)^2\Big)^\frac{1}{2}\Big(\E\big(Y_t^{k}-\widehat{Y_t^{k}}\big)^2\Big)^\frac{1}{2}\notag
\\&=\frac{CM_n N_n}{N_n+M_n}\sqrt{\omega(t)\widehat{\omega}(t)}.
\end{align}
Substituting \eqref{eq: D1i} and \eqref{eq: D2i} into \eqref{eq: Di} we obtain
\begin{equation}
\label{eq: Di2}
\sum_{i=1}^{N_n}\E\left[D_i(t)\right] \leq \frac{C M_n N_n}{N_n+M_n}\Big(\omega(t)+\sqrt{\omega(t)\widehat{\omega}(t)}\Big).
\end{equation}
Finally we estimate $E_i$ analogously as in $C_i$ and get
\begin{align*}
\E\left[E_i(t)\right]\leq C \Big|(1-a)-\frac{M_n}{N_n+M_n}\Big|\Big(\E\big(X_t^{i}-\widehat{X_t^{i}}\big)^2\Big)^{\frac{1}{2}}= C \Big|(1-a)-\frac{M_n}{N_n+M_n}\Big|\sqrt{\omega(t)}.
\end{align*}
Taking the sum over $i$ from $1$ to $N_n$ yields
\begin{equation}
\label{eq: Ei2}
\sum_{i=1}^{N_n}\E\left[E_i(t)\right] \leq CN_n \Big|(1-a)-\frac{M_n}{N_n+M_n}\Big|\sqrt{\omega(t)}.
\end{equation}
Substituting \eqref{eq: Ai}, \eqref{eq: Bi-2},\eqref{eq: Di2} and \eqref{eq: Ei2} into \eqref{eq: est1} we obtain
\begin{align*}
N_n\omega(t)&\leq 2 \int_0^t\bigg(N_n\theta_1\omega(s)+\frac{C N_n^{3/2}}{N_n+M_n}\sqrt{\omega(s)}+CN_n\Big|a-\frac{N_n}{N_n+M_n}\Big|\sqrt{\omega(t)}+\frac{CM_n N_n}{N_n+M_n}\omega(s)
\\&\hspace*{2cm}+\frac{CM_nN_n}{N_n+M_n}\sqrt{\omega(s)\widehat{\omega}(s)}+CN_n\Big|(1-a)-\frac{M_n}{N_n+M_n}\Big|\sqrt{\omega(t)}\bigg)\,ds.
\end{align*}
By dividing both sides by $N_n$ we get
\begin{align}
\label{eq: omega}
\omega(t)&\leq C \int_0^t\bigg(\omega(s)+\Big(\frac{N_n^{1/2}}{N_n+M_n}+\Big|a-\frac{N_n}{N_n+M_n}\Big|+\Big|(1-a)-\frac{M_n}{N_n+M_n}\Big|\Big)\sqrt{\omega(s)}
\\&\hspace*{2cm} +\sqrt{\omega(s)\widehat{\omega}(s)}\bigg)\,ds\notag
\\&\leq C\int_0^t\Big(\omega(s)+\widehat{\omega}(s)+f(n)\sqrt{\omega(s)+\widehat{\omega}(s)}\Big)\,ds,
\end{align}
where $0\leq f(n)\leq C \frac{N_n^{1/2}}{N_n+M_n}+\Big|a-\frac{N_n}{N_n+M_n}\Big|+\Big|(1-a)-\frac{M_n}{N_n+M_n}\Big|$ (hence $f(n)\to 0$ as $n\to 0$).  Note that we have used $\widehat{\omega}(s)\geq 0$ and the elementary $\sqrt{xy}\leq \frac{1}{2}(x+y)$ to obtain the last estimate.

Analogously we obtain 
\begin{equation}
\label{eq: homega}
\widehat{\omega}(t)\leq C\int_0^t\Big(\omega(s)+\widehat{\omega}(s)+\widehat{f}(n)\sqrt{\omega(s)+\widehat{\omega}(s)}\Big)\,ds
\end{equation}
for some function $0\leq \widehat{f}(n)$ that tends to $0$ as $n$ goes to infinity.

Taking the sum of \eqref{eq: omega} and \eqref{eq: homega} yields
\begin{equation}
\omega(t)+\widehat{\omega}(t)\leq C \int_0^t\Big(\omega(s)+\widehat{\omega}(s)+(f(n)+\widehat{f}(n))\sqrt{\omega(s)+\widehat{\omega}(s)}\Big)\,ds.
\end{equation}
Applying Lemma~\ref{lem: Gronwall}, we obtain
\begin{equation}
\omega(t)+\widehat{\omega}(t)\leq C\bigg(\big(f(n)+\widehat{f}(n)\big)\Big(e^\frac{Ct}{2}-1\Big)\bigg)^2.
\end{equation}
Since $f(n)+\widehat{f}(n)\to 0$ as $n\to 0$, the last estimate implies that
\[
\lim\limits_{n\to\infty}\sup_{t\in [0,T]}\omega(t)=0\quad\text{and}\quad \lim\limits_{n\to\infty}\sup_{t\in [0,T]}\widehat{\omega}(t)=0.
\]
This completes the proof of Proposition~\ref{prop: 2nd}.
\end{proof}
\begin{remark} If $N_n$ and $M_n$ tend to $+\infty$ simultaneously but $\frac{N_n}{M_n}$ is a constant, then from the computations in the proof  of Proposition \ref{prop: 2nd}, we obtain explicit  estimates
\begin{equation}
\sup_{t\in[0,T]}\E\big(X_t^{i}-\widehat{X_t^{i}}\big)^2\leq \frac{C}{N_n}~~\text{and}~~\sup_{t\in[0,T]}\E\big(Y_t^{i}-\widehat{Y_t^{i}}\big)^2\leq \frac{\hat C}{M_n},
\end{equation}
for some positive constants $C$ an $\hat C$.
\end{remark}

The following proposition strengthens Proposition~\ref{prop: 2nd}. 
\begin{proposition}
\label{prop: 4th}
We have
\begin{equation}
\lim\limits_{n\to\infty}\sup\limits_{t\in [0,T]}\E\bigg[\Big(X_t^{i}-\widehat{X_t^{i}}\Big)^4\bigg]=0\quad\text{and}\quad\lim\limits_{n\to\infty}\sup\limits_{t\in [0,T]}\E\bigg[\Big(Y_t^{i}-\widehat{Y_t^{i}}\Big)^4\bigg]=0.\end{equation}
\end{proposition}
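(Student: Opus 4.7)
The plan is to adapt the argument of Proposition \ref{prop: 2nd} verbatim to the fourth moment. Define
\[
\omega_4(t) := \E\big[(X_t^{1} - \widehat{X_t^{1}})^4\big], \quad \widehat{\omega}_4(t) := \E\big[(Y_t^{1} - \widehat{Y_t^{1}})^4\big],
\]
which, by exchangeability, coincide with $\E[(X_t^{i} - \widehat{X_t^{i}})^4]$ and $\E[(Y_t^{i} - \widehat{Y_t^{i}})^4]$ respectively for every $i$. Since the driving Brownian motions in \eqref{eq: many SDEs} and \eqref{eq: MV-system} are identical, the difference $X_t^i - \widehat{X_t^i}$ has no martingale part, so applying It\^o's chain rule to $x\mapsto x^4$ yields
\[
\omega_4(t) = 4\int_0^t \E\Big[(X_s^{1}-\widehat{X_s^{1}})^3\bigl(A_1(s)+B_1(s)+C_1(s)+D_1(s)+E_1(s)\bigr)\Big]\,ds,
\]
where $A_i,\ldots,E_i$ is the very same decomposition of the drift difference used in the proof of Proposition \ref{prop: 2nd}.

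The main step is then to estimate each of the five terms by the corresponding argument of the second-moment proof, now with the factor $(X-\widehat X)$ replaced by $(X-\widehat X)^3$. Since $x\mapsto x^3$ is strictly increasing, all monotonicity and symmetrisation tricks carry over: hypothesis (H3) multiplied by the non-negative quantity $(X_s^1 - \widehat{X_s^1})^2$ delivers $\E[(X-\widehat X)^3 A_1] \leq \theta_1\, \omega_4(s)$; and if $X^i_s-X^j_s\geq \widehat X^i_s-\widehat X^j_s$, then $\Delta^i_s\geq\Delta^j_s$, hence $(\Delta^i_s)^3\geq(\Delta^j_s)^3$, which together with the monotonicity of $\nabla F_{11}$ shows that the symmetrised self-interaction contribution in $B_1$ still carries the right sign to be discarded. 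The residual variance part of $B_1$, together with the terms $C_1$ and $E_1$, are handled by Cauchy--Schwarz in the form $\sqrt{\E[(X_s^i-\widehat{X_s^i})^6]}\cdot\sqrt{\kappa_i}$, producing an explicit error term
\[
\varepsilon(n) := C\Big(\tfrac{N_n^{1/2}+M_n^{1/2}}{N_n+M_n}+\bigl|a-\tfrac{N_n}{N_n+M_n}\bigr|+\bigl|(1-a)-\tfrac{M_n}{N_n+M_n}\bigr|\Big)\to 0,
\]
as long as the sixth moments of $X_s^i$ and $\widehat{X_s^i}$ are uniformly bounded. For the cross term $D_1$, assumption (H8) gives a Lipschitz contribution of order $\tfrac{M_n}{N_n+M_n}\omega_4(s)$, whereas the comparison-with-law part will be handled by Young's inequality $|Y|\cdot|Z|^3\leq \tfrac14 |Y|^4 + \tfrac34 |Z|^4$ to produce a contribution of order $\omega_4(s)+\widehat{\omega}_4(s)$.

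Summing the estimates and performing the symmetric treatment for $\widehat{\omega}_4$ produces the linear integral inequality
\[
\omega_4(t)+\widehat\omega_4(t) \leq C\int_0^t\bigl(\omega_4(s)+\widehat\omega_4(s)\bigr)\,ds + C\,T\,\varepsilon(n),
\]
to which the classical Gronwall lemma applies and yields $\omega_4(t)+\widehat\omega_4(t)\to 0$ uniformly on $[0,T]$, as required.

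The main obstacle is the preliminary uniform sixth-moment bound
\[
\sup_{n\in\N}\sup_{s\in[0,T]}\bigl(\E[|X_s^i|^6] + \E[|Y_s^k|^6]\bigr)<\infty
\]
for the interacting particle system \eqref{eq: many SDEs}, since Proposition \ref{arthur} only provides this bound for the nonlinear limit $\widehat{X}, \widehat{Y}$. The corresponding estimate for the particle system can be obtained by repeating the It\^o/convexity-at-infinity computation of Lemma \ref{pomme} on the symmetric quantity $\frac{1}{N_n}\sum_i\E[|X^i_s|^6]$: symmetrisation kills the odd, increasing self-interaction term, Young's inequality absorbs the Lipschitz cross-interaction, and the confining potential $V_1$ via (H4)--(H5) controls the resulting moment uniformly in $n$ and $t\in[0,T]$.
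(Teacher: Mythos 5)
Your argument is correct in outline but takes a genuinely different route at the key analytic step. Where you apply Cauchy--Schwarz to the products $(X_s^i-\widehat{X_s^i})^3\cdot(\text{drift fluctuation})$, obtaining $\sqrt{\E[(X_s^i-\widehat{X_s^i})^6]}\,\sqrt{\kappa_i}$, the paper instead applies H\"older with exponents $(4/3,4)$, obtaining $\bigl(\E[(X_s^i-\widehat{X_s^i})^4]\bigr)^{3/4}\,\widehat\kappa_i^{1/4}$. This is not a cosmetic difference: with the paper's H\"older split the fluctuation terms all feed back into $\zeta(s)^{3/4}$, so no information about sixth moments of the \emph{difference} $X^i-\widehat{X^i}$ is ever needed, but the resulting integral inequality is genuinely nonlinear and one must invoke the nonlinear Gr\"onwall lemma (Lemma~\ref{lem: Gronwall}) with $\alpha=3/4$. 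Your Cauchy--Schwarz split trades this nonlinearity away: the factor $\sqrt{\E[(X-\widehat X)^6]}$ is only uniformly bounded, not vanishing, so the inequality becomes affine and the classical linear Gr\"onwall suffices --- a genuine simplification of the final step --- but at the price of requiring a uniform-in-$n$ sixth-moment bound for the \emph{interacting} particle system, $\sup_n\sup_{s\le T}\E[|X_s^i|^6]<\infty$. You correctly identify this as the missing ingredient (Proposition~\ref{arthur} only covers the nonlinear limit process) and your sketch of how to obtain it (symmetrize $\frac1{N_n}\sum_i\E[|X_s^i|^6]$, use the oddness/monotonicity of $\nabla F_{11}$ to kill the self-interaction, Young plus (H8) for the cross-interaction, and (H4)--(H5) for the confining Riccati term, \`a la Lemma~\ref{pomme}) is sound and should be worked out as a separate lemma if you pursue this route. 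Two caveats: (i) your Young bound $|Y||Z|^3\le\frac14|Y|^4+\frac34|Z|^4$ for the $D_1$ term effectively collapses the paper's two-step argument (H\"older $(4,4/3)$ then AM-GM $3a+b\ge 4a^{3/4}b^{1/4}$) into one, which is fine; (ii) your affine Gr\"onwall gives $\omega_4+\widehat\omega_4\le CT\varepsilon(n)e^{CT}$, i.e.\ a rate of order $\varepsilon(n)$, which is strictly weaker than the paper's rate $\bigl(g(n)+\widehat g(n)\bigr)^4$. For Proposition~\ref{prop: 4th} as stated (a limit, not a rate) this is harmless, but it does degrade the explicit $O(1/N_n^2)$ estimate recorded in the remark after the proposition and would propagate to a weaker rate (of order $N_n^{-5/8}$ rather than $N_n^{-1}$ when $N_n/M_n$ is constant) in the proof of Theorem~\ref{theo: PoC}, so you should be aware that the two proofs are not interchangeable downstream.
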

\begin{proof}
Let us define
\begin{equation}
\zeta(t):=\E\Big[(X_t^{i}-\widehat{X_t^{i}})^4\Big]\quad\text{and}\quad \widehat{\zeta}(t):=\E\Big[(Y_t^{i}-\widehat{Y_t^{i}})^4\Big].
\end{equation}
The strategy of the proof will be similar to that of Proposition~\ref{prop: 2nd} that consists of three steps: (1) using It\^{o}'s lemma to obtain an expression for $\zeta(t)$, (2) estimating each term that appears in the expression to derive a Gr\"onwall type inequality for $\zeta(t)$ and (3) applying Lemma \ref{lem: Gronwall} to deduce the assertion. 

We now carry out this procedure and will refer to the proof of Proposition~\ref{prop: 2nd} when similar arguments apply. We first use It\^{o} formula to obtain
\begin{align}
\zeta(t)&=\E\Big[(X_t^{i}-\widehat{X_t^{i}})^4\Big]
\\&= -4\E\int_0^t (\nabla V_1(X_s^{i})-\nabla V_1(\widehat{X_s^{i}}))\cdot (X_s^{i}-\widehat{X_s^{i}})^3 \,ds\notag
\\&\qquad-4\E\int_0^t \Big[\frac{1}{N_n+M_n}\sum_{j=1}^{N_n}\nabla F_{11}(X_s^{i}-X_s^{j})-a(\nabla F_{11}\ast\mu_s)(\widehat{X_s^{i}})\Big]\cdot (X_s^{i}-\widehat{X_s^{i}})^3 \,ds\notag
\\&\qquad-4\E\int_0^t \Big[\frac{1}{N_n+M_n}\sum_{k=1}^{M_n}\nabla F_{12}(X_s^{i}-Y_s^{k})-(1-a)(\nabla F_{12}\ast\mu_s)(\widehat{X_s^{i}})\Big]\cdot (X_s^{i}-\widehat{X_s^{i}})^3 \,ds\notag
\\&=:4\int_0^t(F_i(s)+G_i(s)+H_i(s)+I_i(s))\,ds,
\label{eq: est4-1}
\end{align} 
where
\begin{align*}
F_i(t)&:=-\Big(\nabla V_1(X_t^{i})-\nabla V_1(\widehat{X_t^{i}})\Big)\cdot(X_t^{i}-\widehat{X_t^{i}})^3,\\
G_i(t)&:=\bigg[-\frac{1}{N_n+M_n}\sum_{j=1}^{N_n}\Big(\nabla F_{11}(X_t^{i}-X_t^{j})-(\nabla F_{11}\ast\mu_t)(\widehat{X_t^{i}})\Big)\bigg]\cdot (X_{t}^{i}-\widehat{X_t^{i}})^3,
\\H_i(t)&:=\Big(a-\frac{N_n}{N_n+M_n}\Big)(\nabla F_{11}\ast \mu_t)(\widehat{X_t^{i}})\cdot(X_t^{i}-\widehat{X_t^{i}})^3,
\\I_i(t)&:=\bigg[-\frac{1}{N_n+M_n}\sum_{k=1}^{M_n}\Big(\nabla F_{12}(X_t^{i}-Y_t^{k})-(\nabla F_{12}\ast\nu_t)(\widehat{X_t^{i}})\Big)\bigg]\cdot (X_{t}^{i}-\widehat{X_t^{i}})^3,
\\J_i(t)&:=\Big((1-a)-\frac{M_n}{N_n+M_n}\Big)(\nabla F_{12}\ast\nu_t)(\widehat{X_t^{i}})\cdot(X_t^{i}-\widehat{X_t^{i}})^3.
\end{align*}
Next we estimate each term $F_i, G_i, H_i$ and $I_i$. According to Assumption~\ref{asp: assumption}, we have
\begin{equation}
\label{eq: F}
\E\left[F_i(s)\right]\leq \theta_1 \E\left[(X_s^{i}-\widehat{X_s^{i}})^4\right].
\end{equation}
We write $G_i(t)=-\frac{1}{N_n+M_n}(G^1_i(t)+G^2_i(t))$, where
\begin{align*}
G^1_i(t)&:=\sum_{j=1}^{N_n}\Big(\nabla F_{11}(X_t^{i}-X_t^{j})-\nabla F_{11}(\widehat{X_t^{i}}-\widehat{X_t^{j}}\Big)\cdot (X_{t}^{i}-\widehat{X_t^{i}})^3,\\
G^2_i(t)&:=\sum_{j=1}^{N_n}\Big(\nabla F_{11}(\widehat{X_t^{i}}-\widehat{X_t^{j}})-(\nabla F_{11}\ast\mu_t)(\widehat{X_t^{i}}\Big)\cdot (X_{t}^{i}-\widehat{X_t^{i}})^3.
\end{align*}
Similarly as in the proof of Proposition~\ref{prop: 2nd}, we have
\[
\sum_{i=1}^{N_n}G^1_i(t)\geq 0.
\]
Using H\"{o}lder's inequality
\begin{equation}
\label{eq: Holder inequality}
\Big|\int f g\, d\gamma \Big|\leq \Big(\int |f|^{4/3}\,d\gamma\Big)^{3/4}\Big(\int |g|^{4}\,d\gamma\Big)^{1/4},
\end{equation}
we get
\begin{align*}
\E\left[G_i^2(t)\right]&=\E\Big[\sum_{j=1}^{N_n}\Big(\nabla F_{11}(\widehat{X_t^{i}}-\widehat{X_t^{j}})-(\nabla F_{11}\ast\mu_t)(\widehat{X_t^{i}}\Big)\cdot (X_{t}^{i}-\widehat{X_t^{i}})^3\Big]
\\&\qquad\leq \Big(\E\big[(X_{t}^{i}-\widehat{X_t^{i}})^4\big]\Big)^{3/4}(\widehat{\kappa}_i(t))^{1/4},
\end{align*}
where
\[
\widehat{\kappa}_i(t):=\E\bigg[\Big(\sum_{j=1}^{N_n}\nabla F_{11}(\widehat{X_t^{i}}-\widehat{X_t^{j}})-(\nabla F_{11}\ast\mu_t)(\widehat{X_t^{i}})\Big)^{4}\bigg]=:\E\bigg[\Big(\sum_{j=1}^{N_n}a_j(t)\Big)^{4}\bigg]
\]
where $a_j(t):=\nabla F_{11}(\widehat{X_t^{i}}-\widehat{X_t^{j}})-(\nabla F_{11}\ast\mu_t)(\widehat{X_t^{i}})$. Using the multinomial theorem
\begin{align*}
\Big(\sum_{j=1}^n a_j\Big)^4=&\sum_{j=1}^n a_j^4~~+\sum_{1\leq j\neq k\leq n}4 a_j^3 a_k+\sum_{1\leq j<k\leq n}6 a_j^2a_k^2\\
&+\sum_{1\leq j\neq k\neq \ell\leq n}12 a_j^2 a_j a_{\ell}~~+\sum_{1\leq j\neq k\neq \ell\neq m\leq n}24 a_j a_k a_{\ell} a_m,
\end{align*}
we decompose $\widehat{\kappa}_i(t)$ as follows
\[
\widehat{\kappa}_i(t)=\widehat{\kappa}^{(1)}_i(t)+\widehat{\kappa}^{(2)}_i(t)+\widehat{\kappa}^{(3)}_i(t)+\widehat{\kappa}^{(4)}_i(t)+\widehat{\kappa}^{(5)}_i(t),
\]
where
\begin{align*}
&\widehat{\kappa}^{(1)}_i(t)=\E\Big(\sum_{j=1}^n a_j(t)^4\Big),\quad  \widehat{\kappa}^{(2)}_i(t)=\E\Big(\sum_{1\leq j\neq k\leq n}4 a_j(t)^3 a_k(t)\Big), \quad \widehat{\kappa}^{(3)}_i(t)=\E\Big(\sum_{1\leq j<k\leq n}6 a_j(t)^2a_k(t)^2\Big),
\\&\widehat{\kappa}^{(4)}_i(t)=\E\Big(\sum_{1\leq j\neq k\neq \ell\leq n}12 a_j(t)^2 a_j(t) a_{\ell}(t)\Big), \quad \widehat{\kappa}^{(5)}_i(t)=\E\Big(\sum_{1\leq j\neq k\neq \ell\neq m\leq n}24 a_j(t)a_k(t)a_{\ell}(t)a_m(t)\Big).
\end{align*}
As in the proof of Proposition~\ref{prop: 2nd}
\[
\widehat{\kappa}^{(2)}_i(t)=\widehat{\kappa}^{(4)}_i(t)=\widehat{\kappa}^{(5)}_i(t)=0\quad\text{and}\quad \widehat{\kappa}^{(1)}_i(t)\leq CN_n^2,\quad \widehat{\kappa}^{(3)}_i(t)\leq CN_n^2.
\]
Therefore, we obtain $\widehat{\kappa_i}(t)\leq CN_n^2$, thus
\begin{equation}
\label{eq: G}
\E\left[G^2_i(t)\right]\leq C \sqrt{N_n}	\Big(\E\big[(X_{t}^{i}-\widehat{X_t^{i}})^4\big]\Big)^{3/4}\quad\text{and}\quad \E\left[G_i(t)\right]\leq \frac{C\sqrt{N_n}}{N_n+M_n}\Big(\E\big[(X_{t}^{i}-\widehat{X_t^{i}})^4\big]\Big)^{3/4}.
\end{equation}
Next we estimate $H_i$. Using H\"{o}lder's inequality \eqref{eq: Holder inequality} again we have
\begin{align*}
\E\left[H_i(t)\right]\leq \Big|a-\frac{N_n}{N_n+M_n}\Big|\Big(\E\big[((X_{t}^{i}-\widehat{X_t^{i}})^4\big]\Big)^{3/4}\Big(\E\big[(\nabla F_{11}\ast \mu_t)(\widehat{X_t^{i}})^4\big] \Big)^{1/4}. 
\end{align*}
As in the proof of \eqref{eq: bound convol term}, it holds that
\[
\E\big[(\nabla F_{11}\ast \mu_t)(\widehat{X_t^{i}})^4\big]\leq C,
\]
which implies that
\begin{equation}
\label{eq: H}
\E(H_i)\leq C \Big|a-\frac{N_n}{N_n+M_n}\Big|\Big(\E\big[((X_{t}^{i}-\widehat{X_t^{i}})^4\big]\Big)^{3/4}.
\end{equation}
We proceed with the term $I_i$
\begin{align}
\label{eq: Ii-1}
I_i(t)&=\bigg[-\frac{1}{N_n+M_n}\sum_{k=1}^{M_n}\Big(\nabla F_{12}(X_t^{i}-Y_t^{k})-(\nabla F_{12}\ast\nu_t)(\widehat{X_t^{i}})\Big)\bigg]\cdot (X_{t}^{i}-\widehat{X_t^{i}})^3\notag
\\&=\bigg[-\frac{1}{N_n+M_n}\sum_{k=1}^{M_n}\Big(\nabla F_{12}(X_t^{i}-Y_t^{k})-\nabla F_{12}(\widehat{X_t^{i}}-Y_t^{k})\Big)\bigg]\cdot (X_{t}^{i}-\widehat{X_t^{i}})^3\notag
\\&\qquad-\bigg[\frac{1}{N_n+M_n}\sum_{k=1}^{M_n}\Big(\nabla F_{12}(\widehat{X_t^{i}}-Y_t^{k})-(\nabla F_{12}\ast\nu_t)(\widehat{X_t^{i}})\Big)\bigg]\cdot (X_{t}^{i}-\widehat{X_t^{i}})^3\notag
\\&=: I_i^{(1)}(t)+I_i^{(2)}(t).
\end{align}
Using the Lipschitz property of $\nabla F_{12}$ we get
\begin{align*}
|I_i^{(1)}(t)|\leq \frac{K}{N_n+M_n}\sum_{k=1}^{M_n}(X_t^{i}-\widehat{X_t^{i}})^4,
\end{align*}
which implies that
\begin{equation*}
\E\left[I_i^{(1)}(t)\right]\leq \frac{KM_n}{N_n+M_n}\E\Big[(X_t^{i}-\widehat{X_t^{i}})^4\Big].
\end{equation*}
Summing this estimate over $i$ yields
\begin{equation}
\label{eq: Ii1}
\sum_{i=1}^{N_n}\E\left[I_i^{(1)}(t)\right]\leq \frac{KM_n N_n}{N_n+M_n}\E\Big[(X_t^{i}-\widehat{X_t^{i}})^4\Big].
\end{equation}
For the term $I_i^{(2)}(t)$, we have
\begin{align*}
\E\left[I_i^{(2)}(t)\right]&=-\frac{1}{N_n+M_n}\E\Big[\sum_{k=1}^{M_n}\Big(\nabla F_{12}(\widehat{X_t^{i}}-Y_t^{k})-(\nabla F_{12}\ast\nu_t)(\widehat{X_t^{i}})\cdot (X_{t}^{i}-\widehat{X_t^{i}})^3\Big]
\\&=-\frac{1}{N_n+M_n}\E\Big[\sum_{k=1}^{M_n}\Big(\nabla F_{12}(\widehat{X_t^{i}}-Y_t^{k})-\nabla F_{12}(\widehat{X_t^{i}}-\widehat{Y_t^{k}})\cdot (X_{t}^{i}-\widehat{X_t^{i}})^3\Big]
\\&\leq\frac{K}{M_n+N_n}\sum_{k=1}^{M_n}\E\Big(|Y_t^{k}-\widehat{Y_t^{k}}||X_{t}^{1,i}-\widehat{X_t^{i}}|^3\Big)
\\&\leq \frac{K}{M_n+N_n}\sum_{k=1}^{M_n}\Big(\E(Y_t^{k}-\widehat{Y_t^{k}})^4\Big)^{1/4}\Big(\E(X_t^{i}-\widehat{X_t^{i}})^4\Big)^{3/4}.
\end{align*}
Hence
\begin{align}
\label{eq: Ii2}
\sum_{i=1}^{N_n}\E\left[I_i^{(2)}(t)\right]&\leq \frac{K}{M_n+N_n}\Big(\sum_{k=1}^{M_n}\Big(\E(Y_t^{k}-\widehat{Y_t^{k}})^4\Big)^{1/4}\Big)\Big(\sum_{i=1}^{N_n}\Big(\E(X_t^{i}-\widehat{X_t^{i}})^4\Big)^{3/4}\Big)\notag
\\&=\frac{KM_nN_n}{M_n+N_n}\Big(\E(Y_t^{k}-\widehat{Y_t^{k}})^4\Big)^{1/4}\Big)\Big(\E(X_t^{i}-\widehat{X_t^{i}})^4\Big)^{3/4}\Big).
\end{align}
Substituting \eqref{eq: Ii1} and \eqref{eq: Ii2} into \eqref{eq: Ii-1}, we obtain
\begin{align}
\label{eq: I}
\sum_{i=1}^{N_n}\E\left[I_i(t)\right]\leq& \frac{KM_n N_n}{N_n+M_n}\E\Big[(X_t^{i}-\widehat{X_t^{i}})^4\Big]\\
&+\frac{KM_nN_n}{M_n+N_n}\Big(\E(Y_t^{k}-\widehat{Y_t^{k}})^4\Big)^{1/4}\Big)\Big(\E(X_t^{i}-\widehat{X_t^{i}})^4\Big)^{3/4}\Big).\nonumber
\end{align}
Similarly as $H_i$ we get
\begin{equation}
\label{eq: J}
\E\left[J_i(t)\right]\leq C\Big|(1-a)-\frac{M_n}{N_n+M_n}\Big|
\Big(\E\big[((X_{t}^{i}-\widehat{X_t^{i}})^4\big]\Big)^{3/4}.
\end{equation}
Taking the sum over $i$ in \eqref{eq: est4-1} and from estimates \eqref{eq: F}, \eqref{eq: G}, \eqref{eq: H} and \eqref{eq: I} we get
\begin{align}
N_n\zeta(t)\leq 4\int_0^t\Big[&N_n\theta_1 \zeta(s)+\frac{CN_n^{3/2}}{N_n+M_n}\zeta^{3/4}(s)+N_n\Big|a-\frac{N_n}{N_n+M_n}\Big|\zeta^{3/4}(s)\notag
\\&+C N_n\Big|(1-a)-\frac{M_n}{N_n+M_n}\Big|\zeta^{3/4}(s)+\frac{KM_n N_n}{N_n+M_n}\zeta(s)+\frac{KM_nN_n}{M_n+N_n}\zeta^{3/4}(s)\widehat{\zeta}^{1/4}(s)
\Big]. \notag
\end{align}
By dividing both sides of the above estimate by $N_n$, noting that $\frac{M_n}{M_n+N_n}\leq 1$,
we write the result in a compact form 
\begin{equation}
\zeta(t)\leq C\int_0^t \Big(\zeta(s)+g(n)\zeta^{3/4}(s)+\zeta^{3/4}\widehat{\zeta}^{1/4}(s)\Big)\,ds
\end{equation}
where
\[
g(n)=\frac{\sqrt{N_n}}{N_n+M_n}+ \Big|a-\frac{N_n}{N_n+M_n}\Big|+\Big|(1-a)-\frac{M_n}{N_n+M_n}\Big|\overset{n\to\infty}{\longrightarrow} 0
\]
Using the inequality of arithmetic and geometric means,
\[
3a+b=a+a+a+b\geq 4 a^{3/4}b^{1/4}\quad \text{for all}~~a,b\geq 0,
\]
and the non-negativity of $\zeta$ and $\widehat{\zeta}$, we obtain the following estimate
\begin{equation}
\label{eq: zeta}
\zeta(t)\leq C \int _0^t \Big[\zeta(s)+\widehat{\zeta}(s)+g(n)(\zeta(s)+\widehat{\zeta}(s))^{3/4}\Big]\,ds.
\end{equation}
Analogously we obtain a similar estimate for $\widehat{\zeta}$
\begin{equation}
\label{eq: hat zeta}
\widehat{\zeta}(t)\leq C \int _0^t \Big[\zeta(s)+\widehat{\zeta}(s)+\widehat{g}(n)(\zeta(s)+\widehat{\zeta}(s))^{3/4}\Big]\,ds,
\end{equation}
where
\[
\widehat{g}(n)=\frac{\sqrt{M_n}}{N_n+M_n}+ \Big|a-\frac{N_n}{N_n+M_n}\Big|+\Big|(1-a)-\frac{M_n}{N_n+M_n}\Big| \overset{n\to\infty}{\longrightarrow}0.
\]
Adding \eqref{eq: zeta} and \eqref{eq: hat zeta} gives
\begin{equation}
(\zeta+\widehat{\zeta})(t)\leq C \int _0^t \Big[\zeta(s)+\widehat{\zeta}(s)+(g(n)+\widehat{g}(n))(\zeta(s)+\widehat{\zeta}(s))^{3/4}\Big]\,ds.
\end{equation}
Applying Lemma \ref{lem: Gronwall} for $\alpha=\frac{3}{4}$ we get
\begin{equation}
(\zeta+\widehat{\zeta})(t)\leq C\Big((g(n)+\widehat{g}(n))\big(e^{Ct}-1\big)\Big)^4,
\end{equation}
from which we deduce the statement of the proposition.
\end{proof}
\begin{remark} If $N_n$ and $M_n$ tend to $+\infty$ simultaneously but $\frac{N_n}{M_n}$ is a constant, then from the computations in the proof  of Proposition \ref{prop: 2nd}, we obtain explicit  estimates
\begin{equation}
\sup_{t\in[0,T]}\E\left[\big(X_t^{i}-\widehat{X_t^{i}}\big)^4\right]\leq \frac{C}{N^2_n}~~\text{and}~~\sup_{t\in[0,T]}\E\left[\big(Y_t^{i}-\widehat{Y_t^{i}}\big)^4\right]\leq \frac{\hat C}{M_n^2},
\end{equation}
for some positive constants $C$ an $\hat C$.
\end{remark}

We are now ready to prove Theorem \ref{theo: PoC}.
\begin{proof}[\textbf{Proof of Theorem \ref{theo: PoC}}]
According to \eqref{eq: est1} we have
\begin{align}
&\big(X_t^{i}-\widehat{X_t^{i}}\big)^2\notag\\
&=-2\int_0^t (\nabla V_1(X_s^{i})-\nabla V_1(\widehat{X_s^{i}}))\cdot (X_s^{i}-\widehat{X_s^{i}}) \,ds\notag
\\&\quad-2\int_0^t \Big[\frac{1}{N_n+M_n}\sum_{j=1}^{N_n}\nabla F_{11}(X_s^{i}-X_s^{j})-a(\nabla F_{11}\ast\mu_s)(\widehat{X_s^{i}})\Big]\cdot (X_s^{i}-\widehat{X_s^{i}}) \,ds\notag
\\&\quad-2\int_0^t \Big[\frac{1}{N_n+M_n}\sum_{k=1}^{M_n}\nabla F_{12}(X_s^{i}-Y_s^{k})-(1-a)(\nabla F_{12}\ast\mu_s)(\widehat{X_s^{i}})\Big]\cdot (X_s^{i}-\widehat{X_s^{i}}) \,ds.
\label{eq: estimate1}
\end{align}
We define
\begin{align}
L&:=2\int_0^T\bigg|\Big[\frac{1}{N_n+M_n}\sum_{j=1}^{N_n}\nabla F_{11}(X_s^{i}-X_s^{j})-a(\nabla F_{11}\ast\mu_s)(\widehat{X_s^{i}})\Big]\cdot (X_s^{i}-\widehat{X_s^{i}})\bigg| \,ds\notag
\\&\quad+2\int_0^T \bigg|\Big[\frac{1}{N_n+M_n}\sum_{k=1}^{M_n}\nabla F_{12}(X_s^{i}-Y_s^{k})-(1-a)(\nabla F_{12}\ast\mu_s)(\widehat{X_s^{i}})\Big]\cdot (X_s^{i}-\widehat{X_s^{i}})\bigg| \,ds
\\&=:L_1+L_2.
\end{align}
From \eqref{eq: estimate1} and \eqref{eq: V1} we can estimate
\begin{equation}
\label{eq: estimate2}
\big(X_t^{i}-\widehat{X_t^{i}}\big)^2\leq 2\theta_1\int_0^t \Big|X_s^{i}-\widehat{X_s^{i}}\Big|^2\, ds+L.
\end{equation}
Setting $\varphi(t):=\int_0^t \Big|X_s^{i}-\widehat{X_s^{i}}\Big|^2\, ds$, we get
\[
\varphi'(t)\leq 2\theta_1 \varphi(t)+L.
\]
Using Gr\"onwall lemma and the fact that $ \varphi(0) = 0$, we deduce that, for any $t\in[0,T]$,
\[
\varphi(t)\leq \frac{L}{2\theta_1}\Big(e^{2\theta_1 t}-1\Big).
\]
Substituting this back into \eqref{eq: estimate2}, we obtain
\begin{equation*}
\big(X_t^{i}-\widehat{X_t^{i}}\big)^2=\varphi'(t)\leq M e^{2\theta_1 t}\leq e^{2\theta_1 T}L,
\end{equation*}
from which we deduce that
\begin{equation}
\label{eq: estimate 3}
\E\Big[\sup_{t\in [0,T]}\big(X_t^{i}-\widehat{X_t^{i}}\big)^2\Big]\leq e^{2\theta_1 T}\E\left[L\right].
\end{equation}
Next we find an upper bound for $L$. For the first term, $L_1$, proceeding similarly as the terms $B_i$ and $C_i$ in the proof of Proposition~\ref{prop: 2nd}, we have
\begin{align}
\label{eq: L1}
\E\left[L_1\right]&\leq \frac{2}{M_n+N_n}\sum_{j=1}^{N_n}\int_0^T \big(|\rho^1_{ij}(s)|+|\rho^2_{ij}(s)|\big)\,ds+ 2C\Big|a-\frac{N_n}{M_n+N_n}\Big|\int_0^T \Big(E\big[X_s^{i}-\widehat{X}_s^{i}\big]^2\Big)^\frac{1}{2}\, ds\notag
\\&\leq \frac{2}{M_n+N_n}\sum_{j=1}^{N_n}\int_0^T \big(|\rho^1_{ij}(s)|+|\rho^2_{ij}(s)|\big)\,ds+ 2C T\Big|a-\frac{N_n}{M_n+N_n}\Big| \Big(\sup_{s\in [0,T]} E\big[X_s^{i}-\widehat{X}_s^{i}\big]^2\Big)^\frac{1}{2}\notag
\\&=: K_1+K_2,
\end{align}
where $\rho^1_{ij}(s)$ and $\rho^2_{ij}(s)$ are defined in \eqref{eq: rhoij}.

By Proposition \ref{prop: 2nd}, $K_2$ tends to $0$ as $n$ goes to $+\infty$. If $\frac{M_n}{N_n}$ is constant, then $K_2=0$. We now estimate two terms in $K_1$. According to \eqref{eq: rho2ij}, we have
\begin{equation*}
E\Big(\sum_{j=1}^{N_n}|\rho^2_{ij}(s)|\Big)\leq C\sqrt{N_n}\Big(\sup_{s\in[0,T]}E\big[X_s^{i}-\widehat{X}_s^{i}\big]^2\Big)^\frac{1}{2}. 
\end{equation*}
Thus,
\begin{equation*}
\frac{2}{M_n+N_n}\int_0^TE\Big(\sum_{j=1}^{N_n}|\rho^2_{ij}(s)|\Big)\,ds\leq \frac{2 CT\sqrt{N_n}}{M_n+N_n}\Big(\sup_{s\in[0,T]}E\big[X_s^{i}-\widehat{X}_s^{i}\big]^2\Big)^\frac{1}{2},
\end{equation*}
which converges to $0$ as $n\rightarrow +\infty$. If $\frac{M_n}{N_n}$ is constant, then 
\[
\sup_{s\in[0,T]}E\left[\big(X_s^{i}-\widehat{X}_s^{i}\big)^2\right]\leq \frac{C}{N_n},
\]
which implies that
\begin{equation}
\label{eq: Erho2}
\frac{2}{M_n+N_n}\int_0^TE\Big(\sum_{j=1}^{N_n}|\rho^2_{ij}(s)|\Big)\,ds\leq \frac{2 CT}{N_n}.
\end{equation}
For the $\rho^1_{ij}$ term, using \eqref{eq: rhoij}, Cauchy-Schwarz inequality and Proposition \ref{arthur}, we get
\begin{align*}
\E\big[|\rho^1_{ij}(s)\big]&\leq \bigg\{\E\big[(X_s^{i}-\widehat{X}_s^{i})^2\big] \E\Big[\big(\nabla F_{11}(X_t^{i}-X_t^{j})-\nabla F_{11}(\widehat{X_t^{i}}-\widehat{X_t^{j}})\big)^2\Big]\bigg\}^\frac{1}{2}
\\&\leq \bigg\{\E\big[(X_s^{i}-\widehat{X}_s^{i})^2\bigg\}^\frac{1}{2}
\\&\qquad\qquad\times \bigg\{\E\Big[\big(X^{i}_s-\widehat{X}^{i}_s + \widehat{X}^{j}_s-X_s^{j}\big)^2\big(c+|X^{i}_s-X_s^{j}|^{2q}+|\widehat{X}^{i}_s-\widehat{X}^{j}_s|^{2q}\big)^2\Big] \bigg\}^\frac{1}{2}
\\&\leq \bigg\{\sup_{s\in[0,T]} \E\big[(X_s^{i}-\widehat{X}_s^{i})^2\bigg\}^\frac{1}{2}
\\&\qquad\times \bigg\{ \E\Big[\big(X^{i}_s-\widehat{X}^{i}_s + \widehat{X}^{j}_s-X_s^{j}\big)^4\Big] \E\Big[\big(c+|X^{i}_s-X_s^{j}|^{2q}+|\widehat{X}^{i}_s-\widehat{X}^{j}_s|^{2q}\big)^4\Big]\bigg\}^{\frac{1}{4}}
\\&\leq C \bigg\{\sup_{s\in[0,T]} \E\big[(X_s^{i}-\widehat{X}_s^{i})^2\bigg\}^\frac{1}{2}\times \bigg\{\sup_{s\in[0,T]} \E\big[(X_s^{i}-\widehat{X}_s^{i})^4\bigg\}^\frac{1}{4},
\end{align*}
which tends to $0$ as $n$ tends to $+\infty$ according to Propositions \ref{prop: 2nd} and \ref{prop: 4th}. If $\frac{N_n}{M_n}$ is constant then,
\begin{equation*}
\E\big[|\rho^1_{ij}(s)|\big]\leq C \bigg\{\sup_{s\in[0,T]} \E\big[(X_s^{i}-\widehat{X}_s^{i})^2\bigg\}^\frac{1}{2}\times \bigg\{\sup_{s\in[0,T]} \E\big[(X_s^{i}-\widehat{X}_s^{i})^4\bigg\}^\frac{1}{4}\leq \frac{C}{N_n}.
\end{equation*}
Thus
\begin{equation}
\label{eq: Erho1}
\frac{2}{M_n+N_n}\sum_{j=1}^{N_n}\int_0^T \E\big[|\rho^1_{ij}(s)|\big]\,ds \leq \frac{C}{N_n}.
\end{equation}
From \eqref{eq: L1}, \eqref{eq: Erho1} and \eqref{eq: Erho2}, we get
\begin{equation}
\E\left[L_1\right]\leq \frac{C}{N_n}.
\end{equation} 
The first term $L_2$ can be estimated similarly as the terms $D_i$ and $E_i$ in the proof of Proposition~\ref{prop: 2nd}. We have
\begin{align}
\E L_2&\leq 2\int_0^T \bigg\{\frac{C M_n}{M_n+N_n}\E\big(X_s^{i}-\widehat{X_s^{i}}\big)^2+\frac{C}{M_n+N_n}\sum_{k=1}^{M_n}\Bigg(\E\Big[\big(Y_s^{k}-\widehat{Y_s^{k}}\big)^2\Big]\E\Big[\big(X_s^{i}-\widehat{X_s^{i}}\big)^2\Big]\Bigg)^{\frac{1}{2}}\notag
\\&\qquad+ C\Big|(1-a)-\frac{M_n}{M_n+N_n}\Big|\Big[\E\big(X_s^{i}-\widehat{X_s^{i}}\big)^2\Big]^{\frac{1}{2}}\bigg\}\,ds\notag
\\&\leq 2\int_0^T \bigg\{\frac{C M_n}{M_n+N_n}\sup_{s\in[0,T]}\E\big(X_s^{i}-\widehat{X_s^{i}}\big)^2+ C\Big|(1-a)-\frac{M_n}{M_n+N_n}\Big|\Big[\sup_{s\in[0,T]}\E\big(X_s^{i}-\widehat{X_s^{i}}\big)^2\Big]^{\frac{1}{2}}\notag
\\&\qquad +\frac{C M_n}{M_n+N_n}\Bigg(\sup_{s\in[0,T]}\E\Big[\big(Y_s^{i}-\widehat{Y_s^{i}}\big)^2\Big]\sup_{s\in[0,T]}\E\Big[\big(X_s^{i}-\widehat{X_s^{i}}\big)^2\Big]\Bigg)^{\frac{1}{2}}\bigg\}\,ds\notag
\\&\leq 2T\bigg\{\frac{C M_n}{M_n+N_n}\sup_{s\in[0,T]}\E\big(X_s^{i}-\widehat{X_s^{i}}\big)^2+ C\Big|(1-a)-\frac{M_n}{M_n+N_n}\Big|\Big[\sup_{s\in[0,T]}\E\big(X_s^{i}-\widehat{X_s^{i}}\big)^2\Big]^{\frac{1}{2}}\notag
\\&\qquad+\frac{C M_n}{M_n+N_n}\Bigg(\sup_{s\in[0,T]}\E\Big[\big(Y_s^{i}-\widehat{Y_s^{i}}\big)^2\Big]\sup_{s\in[0,T]}\E\Big[\big(X_s^{i}-\widehat{X_s^{i}}\big)^2\Big]\Bigg)^{\frac{1}{2}}
\bigg\},
\label{eq: estimate 4}
\end{align}
According to Proposition \ref{prop: 2nd}  the RHS of \eqref{eq: estimate 4} tends to $0$ as $n\rightarrow \infty$. If $\frac{N_n}{M_n}$ is constant then it is bounded by $\frac{C}{N_n}$.
Substituting estimates of $\E\left[L_1\right]$ and $\E\left[L_2\right]$ back into \eqref{eq: estimate 3}, we obtain
\[
\E\Big[\sup_{t\in [0,T]}\big(X_t^{i}-\widehat{X_t^{i}}\big)^2\Big]\leq \frac{C e^{2\theta_1 T}}{N_n},
\]
from which the first assertion of the theorem follows. The second assertion is obtained analogously.
\end{proof}
\section{Existence and non-uniqueness of invariant measures}
\label{sec: invariant}
In this section we prove Theorem \ref{theo: invariant} showing the existence and non-uniqueness of invariant measures of the PDE system \eqref{eq: PDEs} that is given below.
\begin{subequations}
\begin{align}
\partial_t\mu_t&=\div\Big(\big(\nabla V_1+a(\nabla F_{11}\ast\mu_t)+(1-a)(\nabla F_{12}\ast \nu_t)\big)\mu_t\Big)+\frac{\sigma^2}{2}\Delta\mu_t,\\
\partial_t\nu_t&=\div\Big(\big(\nabla V_2+a(\nabla F_{21}\ast\mu_t)+(1-a)(\nabla F_{22}\ast \nu_t)\big)\nu_t\Big)+\frac{\sigma^2}{2}\Delta\nu_t.
\end{align}
\end{subequations}
We only consider the quadratic interaction potentials
$$
F_{ij}(x)=\frac{\alpha_{ij} x^2}{2}.
$$
We expect that extensions to polynomial potentials could be possible but the analysis will be much more intricate. We leave this for future investigation.
Stationary solutions $(\mu(x)\,dx,\nu(x)\,dx)$ of the above system is determined by
\begin{subequations}
\begin{align}
\label{eq: invariant measures}
\mu(x)&=\frac{\exp\Big(-\frac{2}{\sigma^2}\big(V_1(x)+a F_{11}\ast \mu(x)+(1-a)F_{12}\ast\nu(x)\big)\Big)}{\int \exp\Big(-\frac{2}{\sigma^2}\big(V_1(x)+a F_{11}\ast \mu(x)+(1-a)F_{12}\ast\nu(x)\big)\Big)\,dx},\\
\nu(x)&=\frac{\exp\Big(-\frac{2}{\sigma^2}\big(V_2(x)+a F_{21}\ast \mu(x)+(1-a)F_{22}\ast\nu(x)\big)\Big)}{\int \exp\Big(-\frac{2}{\sigma^2}\big(V_2(x)+a F_{21}\ast \mu(x)+(1-a)F_{22}\ast\nu(x)\big)\Big)\,dx}.
\end{align}
\end{subequations}
We define 
\begin{equation*}
m_1:=\int x\mu(x)\,dx\quad\text{and}\quad m_2=\int x\nu(x)\,dx.
\end{equation*}
Using explicit formulas $F_{ij}(x)=\frac{\alpha_{ij}}{2}x^2$ for $i,j=1,2$, we obtain
\begin{subequations}
\label{eq: eqn for mu and nu}
\begin{align}
\mu(x)=\frac{\exp\Big(-\frac{2}{\sigma^2}\big(V_1(x)+a\frac{\alpha_{11}}{2}x^2-a\alpha_{11} m_1 x+(1-a)\frac{\alpha_{12}}{2}x^2-(1-a)\alpha_{12} m_2 x\big)\Big)}{\int \exp\Big(-\frac{2}{\sigma^2}\big(V_1(x)+a\frac{\alpha_{11}}{2}x^2-a\alpha_{11} m_1 x+(1-a)\frac{\alpha_{12}}{2}x^2-(1-a)\alpha_{12} m_2 x\big)\Big)\,dx},\\
\nu(x)=\frac{\exp\Big(-\frac{2}{\sigma^2}\big(V_2(x)+a\frac{\alpha_{21}}{2}x^2-a\alpha_{21} m_1 x+(1-a)\frac{\alpha_{22}}{2}x^2-(1-a)\alpha_{22} m_2 x\big)\Big)}{\int \exp\Big(-\frac{2}{\sigma^2}\big(V_2(x)+a\frac{\alpha_{21}}{2}x^2-a\alpha_{21} m_1 x+(1-a)\frac{\alpha_{22}}{2}x^2-(1-a)\alpha_{22} m_2 x\big)\Big)\,dx}.
\end{align}
 \end{subequations}
Therefore, $(m_1,m_2)$ satisfies the following system

\begin{subequations}
\label{eq: m-system}
 \begin{align}
m_1&=\frac{\int x \exp\Big(-\frac{2}{\sigma^2}\big(V_1(x)+a\frac{\alpha_{11}}{2}x^2-a\alpha_{11} m_1 x+(1-a)\frac{\alpha_{12}}{2}x^2-(1-a)\alpha_{12} m_2 x\big)\Big)\,dx}{\int \exp\Big(-\frac{2}{\sigma^2}\big(V_1(x)+a\frac{\alpha_{11}}{2}x^2-a\alpha_{11} m_1 x+(1-a)\frac{\alpha_{12}}{2}x^2-(1-a)\alpha_{12} m_2 x\big)\Big)\,dx},\label{eq: m-system1}\\
m_2&=\frac{\int x \exp\Big(-\frac{2}{\sigma^2}\big(V_2(x)+a\frac{\alpha_{21}}{2}x^2-a\alpha_{21} m_1 x+(1-a)\frac{\alpha_{22}}{2}x^2-(1-a)\alpha_{22} m_2 x\big)\Big)}{\int \exp\Big(-\frac{2}{\sigma^2}\big(V_2(x)+a\frac{\alpha_{21}}{2}x^2-a\alpha_{21} m_1 x+(1-a)\frac{\alpha_{22}}{2}x^2-(1-a)\alpha_{22} m_2 x\big)\Big)\,dx}.\label{eq: m-system2}
 \end{align}
\end{subequations}
We define $\Phi_1(m_1,m_2)$ and $\Phi_2(m_1,m_2)$
to be the right-hand sides of \eqref{eq: m-system1} and \eqref{eq: m-system2}, respectively. Setting $\Phi(m_1,m_2):=(\Phi_1,\Phi_2)(m_1,m_2)$. We rewrite \eqref{eq: m-system} as
\begin{equation}
\label{eq: m-system3}
(m_1,m_2)=\Phi(m_1,m_2),
\end{equation}
where $\Phi(m_1,m_2)$ denotes its right-hand side. 
\subsection{Symmetrical invariant measure}
We suppose that $V_1$ and $V_2$ are symmetrical. 
\begin{lemma} There exists a unique pair of symmetric invariant measures that are given by
\begin{subequations}
\begin{align}
\mu^0(x)&=\frac{\exp\Big(-\frac{2}{\sigma^2}\big(V_1(x)+a\frac{\alpha_{11}}{2}x^2+(1-a)\frac{\alpha_{12}}{2}x^2\big)\Big)}{\int \exp\Big(-\frac{2}{\sigma^2}\big(V_1(x)+a\frac{\alpha_{11}}{2}x^2+(1-a)\frac{\alpha_{12}}{2}x^2\big)\Big)\,dx},\\
\nu^0(x)&=\frac{\exp\Big(-\frac{2}{\sigma^2}\big(V_2(x)+a\frac{\alpha_{21}}{2}x^2+(1-a)\frac{\alpha_{22}}{2}x^2\big)\Big)}{\int \exp\Big(-\frac{2}{\sigma^2}\big(V_2(x)+a\frac{\alpha_{21}}{2}x^2+(1-a)\frac{\alpha_{22}}{2}x^2\big)\Big)\,dx}.
\end{align}
\end{subequations}
\end{lemma}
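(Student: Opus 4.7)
The plan is to exploit the characterization \eqref{eq: eqn for mu and nu} of invariant measures via their means, together with the observation that under the symmetry hypothesis on $V_1$ and $V_2$ a symmetric density is equivalent to a density with zero mean.

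For \textbf{existence}, I would simply verify that $(\mu^0,\nu^0)$ as defined in the statement is precisely \eqref{eq: eqn for mu and nu} evaluated at $m_1=m_2=0$. The integrands in the numerators and denominators defining $\mu^0, \nu^0$ depend on $x$ only through $V_i(x)$ and $x^2$, both even functions by hypothesis on $V_1, V_2$. Consequently $\mu^0(-x)=\mu^0(x)$ and $\nu^0(-x)=\nu^0(x)$, so $\int x\mu^0(x)\,dx=\int x\nu^0(x)\,dx=0$. Thus $(0,0)$ is a fixed point of the map $\Phi$ from \eqref{eq: m-system3}, and the corresponding $(\mu^0,\nu^0)$ is an invariant measure which is symmetric.

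For \textbf{uniqueness}, let $(\mu,\nu)$ be any pair of symmetric invariant measures. Symmetry forces the means
\begin{equation*}
m_1=\int x\,\mu(x)\,dx=0\quad\text{and}\quad m_2=\int x\,\nu(x)\,dx=0,
\end{equation*}
since the integrand $x\mu(x)$ is odd (and likewise for $\nu$). Substituting $m_1=m_2=0$ into \eqref{eq: eqn for mu and nu} then immediately yields $\mu=\mu^0$ and $\nu=\nu^0$.

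There is no real obstacle in this lemma: it is essentially a tautology once one notices that for symmetric potentials the fixed-point system \eqref{eq: m-system} admits $(0,0)$ as a solution, and that zero mean and symmetry coincide in the class of invariant measures parametrized by \eqref{eq: eqn for mu and nu}. The substance of Theorem \ref{theo: invariant} lies rather in the earlier existence statement for non-symmetric invariant measures, which requires a genuine fixed-point argument on the map $\Phi$ restricted to a suitable box $[m^\ast-\rho\sigma^2,m^\ast+\rho\sigma^2]^2$.
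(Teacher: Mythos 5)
Your argument is correct and follows the same route as the paper's own proof: verify that taking $m_1=m_2=0$ in \eqref{eq: eqn for mu and nu} produces a symmetric pair whose means are zero (giving existence), and observe that symmetry forces any invariant pair to have zero means, whence the fixed-point characterization forces $(\mu,\nu)=(\mu^0,\nu^0)$ (giving uniqueness). No substantive difference from the paper's proof.
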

\begin{proof}
Since $(\mu^0,\nu^0)$ satisfy \eqref{eq: invariant measures} with their mean values $(0,0)$ that fulfill \eqref{eq: m-system2}, they are invariant measures and are symmetric because $V_1$ and $V_2$ are symmetric. Now suppose that $(\mu^0,\nu^0)$ is an arbitrary symmetric invariant measure. Then $(\widehat{\mu}^0,\widehat{\nu}^0)$  satisfies \eqref{eq: invariant measures} with $(m_1,m_2)$ replaced by their mean values $(m_{\widehat{\mu}^0}, m_{\widehat{\nu}^0})$. Since  $(\widehat{\mu}^0$ and $\widehat{\nu}^0)$ are symmetric, we have
\[
m_{\widehat{\mu}^0}=\int x \widehat{\mu}^0(x)\,dx=0=\int x \widehat{\nu}^0(x)\,dx=m_{\widehat{\nu}^0}.
\]
Substituting these values back into \eqref{eq: invariant measures} we obtain $(\widehat{\mu}^0,\widehat{\nu}^0)=(\mu^0,\nu^0)$.
\end{proof}
\subsection{Other invariant measures}
We are now interested in non-symmetrical invariant measures.
\begin{assumption}
\label{asspt: common minizer}
Suppose that $V_1$ and $V_2$ have a common unique minimizer $m^\ast$
\[
V_1'(m^\ast)=V_2'(m^\ast)=0,~~ V_1''(m^\ast)>0~~\text{and}~~V_2''(m^\ast)>0.
\]
\end{assumption}

We will make use of the following result.
\begin{lemma}\cite[Lemma A.3]{HerrmannTugaut10}
\label{lem: HT10}
Let $U$ and $G$ be two $C^\infty(\mathbb{R})$-continuous functions. Let $\lambda$ be a parameter 
that belongs to some compact interval $\mathcal{I}$ of $\R$. We define $U_\lambda=U+\lambda G$. Suppose that $U_\lambda(z)\geq z^2$ for $|z|$ larger than some value $R$ independent of $\lambda$ and that $U_\lambda$ has a unique global minimum at $z_\lambda$ with $U_\lambda''(z_\lambda)>0$. Let $f_m$ be a $C^3$-continuous function depending on some parameter $m$ that belongs to a compact set $\mathcal{M}$. Furthermore, we also assume that there exists some constant $\theta>0$ such that $|f_m(z)|\leq \exp[\theta|U_\lambda(z)|]$ for all $z\geq R, \lambda\in\mathcal{I}, m\in\mathcal{M}$ and $f_m^{(k)}$ is locally bounded uniformly with respect to the parameter $m\in\mathcal{M}$ for $0\leq k\leq 3$. Let $a,b\in \bar{\R}$ such that $a<z_\lambda<b$. Then the following asymptotic result holds as $\varepsilon$ tends to $0$: 
\begin{equation}
\int_a^b f_m(z)\exp\Big[-\frac{2U_\lambda(z)}{\varepsilon}\Big]\,dz=\sqrt{\frac{\pi \varepsilon}{\mathcal{U}_2}} \exp\Big[-\frac{2U_\lambda(z_\lambda)}{\varepsilon}\Big]\Big\{f_m(z_\lambda)+\gamma_0(\lambda)\varepsilon +o^{(1)}_{\mathcal{I}\mathcal{M}}(\varepsilon)\Big\},
\end{equation}
with
\begin{equation}
\gamma_0(\lambda)=f_m(z_\lambda)\Big(\frac{5\U_3^2}{48\U_2^3}-\frac{\U_4}{16\U_2^2}\Big)-f_m'(z_\lambda)\frac{\U_3}{4\U_2^2}+\frac{f_m''(z_\lambda)}{4\U_2}.
\end{equation}
Here $\U_k=U_\lambda^{(k)}(z_\lambda)$ and $o^{(1)}_{\mathcal{I}\mathcal{M}}(\varepsilon)/\varepsilon$ converges to $0$ as $\varepsilon$ goes to $0$ uniformly with respect to the parameters $m$ and $\lambda$. Moreover, for any $n\geq 1$, we have
\begin{equation}
\frac{\int_{\R}z^n e^{f_m(z)} e^{-\frac{2U_\lambda(z)}{\varepsilon}}\,dz}{\int_{\R} e^{f_m(z)} e^{-\frac{2U_\lambda(z)}{\varepsilon}}\,dz}-z_\lambda^n\approx -\frac{n z_\lambda^{n-2}}{4\U_2}\Big[z_\lambda\frac{\U_3}{\U_2}-n+1-2z_\lambda f_m'(z_\lambda)\Big]\varepsilon,
\end{equation}
where the estimate is uniform with respect to the parameters $m$ and $\lambda$ as $\varepsilon\rightarrow 0$.
\end{lemma}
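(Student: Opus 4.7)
The statement is a Laplace-type asymptotic with explicit second-order correction, so I would execute the classical three-step Laplace scheme (localize, rescale, Taylor expand) carefully enough to keep the error uniform in $(\lambda,m)\in\mathcal{I}\times\mathcal{M}$, then derive the ratio formula as a short algebraic consequence. First I fix $\delta>0$ small and independent of $\lambda$, and split the integral over $[a,b]$ at $|z-z_\lambda|=\delta$. Continuous dependence of $z_\lambda$ on $\lambda$ over the compact $\mathcal{I}$ together with the uniform positivity $\U_2(\lambda)>0$ gives $c>0$ with $U_\lambda(z)-U_\lambda(z_\lambda)\geq c$ whenever $|z-z_\lambda|\geq\delta$, uniformly in $\lambda$. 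For the exterior piece I use $U_\lambda(z)\geq z^2$ for $|z|>R$ combined with $|f_m|\leq e^{\theta|U_\lambda|}$ to dominate the integrand by $\exp[(\theta-2/\varepsilon)U_\lambda(z)]$; once $\varepsilon<1/(2\theta)$ this contribution is $O(e^{-c'/\varepsilon})$ uniformly in $(\lambda,m)$ and is absorbed into $o(\varepsilon)$ against the leading order $\sqrt{\varepsilon}\,e^{-2U_\lambda(z_\lambda)/\varepsilon}$.

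On $|z-z_\lambda|\leq\delta$ I rescale $z=z_\lambda+\sqrt{\varepsilon}\,y$ and Taylor expand:
\begin{align*}
-\tfrac{2U_\lambda(z)}{\varepsilon} &= -\tfrac{2U_\lambda(z_\lambda)}{\varepsilon} - \U_2 y^2 - \tfrac{\U_3}{3}\sqrt{\varepsilon}\,y^3 - \tfrac{\U_4}{12}\varepsilon\,y^4 + O(\varepsilon^{3/2}y^5),\\
f_m(z) &= f_m(z_\lambda) + \sqrt{\varepsilon}\,y\,f_m'(z_\lambda) + \tfrac{\varepsilon y^2}{2}\,f_m''(z_\lambda) + O(\varepsilon^{3/2}y^3).
\end{align*}
Factoring $e^{-2U_\lambda(z_\lambda)/\varepsilon}e^{-\U_2 y^2}$ out, expanding the exponential of the cubic-and-higher remainder as $1-\tfrac{\U_3}{3}\sqrt{\varepsilon}y^3+\varepsilon\bigl(\tfrac{\U_3^2}{18}y^6-\tfrac{\U_4}{12}y^4\bigr)+O(\varepsilon^{3/2})$, multiplying by the expansion of $f_m$, and integrating against $e^{-\U_2 y^2}\,dy$ (odd moments vanish; even ones use $\int y^{2k}e^{-\U_2 y^2}\,dy=\sqrt{\pi/\U_2}\,(2k-1)!!/(2\U_2)^k$), the surviving $O(\varepsilon)$ contributions produce exactly $\gamma_0(\lambda)$: the $y^4,y^6$ terms combine to $-\U_4/(16\U_2^2)+5\U_3^2/(48\U_2^3)$ against $f_m(z_\lambda)$, the cross term $\sqrt{\varepsilon}y f_m'\cdot(-\tfrac{\U_3}{3}\sqrt{\varepsilon}y^3)$ gives $-\U_3/(4\U_2^2)$ against $f_m'(z_\lambda)$, and the quadratic term yields $1/(4\U_2)$ against $f_m''(z_\lambda)$.

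The main obstacle is the uniformity qualifier $o^{(1)}_{\mathcal{I}\mathcal{M}}(\varepsilon)$: the pointwise asymptotic must hold uniformly in $(\lambda,m)$. I would use compactness of $\mathcal{I}\times\mathcal{M}$ together with $C^\infty$ regularity of $U,G$ and the stated local bounds on $f_m^{(k)}$ ($k\leq 3$) to obtain uniform bounds on the Taylor remainders of $U_\lambda$ (up to order $5$) and of $f_m$ (up to order $3$) on $[z_\lambda-\delta,z_\lambda+\delta]$. Uniform positivity $\inf_\lambda\U_2>0$ controls all Gaussian moments $\int|y|^k e^{-\U_2 y^2}\,dy$ uniformly in $\lambda$, and the exterior exponential decay from the localization step is uniform by construction. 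Stitching these uniform estimates is precisely what converts the pointwise $o(\varepsilon)$ into $o^{(1)}_{\mathcal{I}\mathcal{M}}(\varepsilon)$; this is really where the care is needed, since all other steps are routine Gaussian bookkeeping.

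For the ratio formula I apply the first assertion twice: once to the numerator with $z^n e^{f_m(z)}$ in place of $f_m$, and once to the denominator with $e^{f_m(z)}$. Both choices are admissible because $e^{f_m}$ inherits $C^3$ regularity and uniformly locally bounded derivatives from $f_m$. Dividing the two expansions (the common prefactor $\sqrt{\pi\varepsilon/\U_2}\,e^{-2U_\lambda(z_\lambda)/\varepsilon}$ cancels) and expanding to first order in $\varepsilon$ gives $z_\lambda^n+\varepsilon e^{-f_m(z_\lambda)}(\gamma_N-z_\lambda^n\gamma_D)+o(\varepsilon)$. The $\bigl(\tfrac{5\U_3^2}{48\U_2^3}-\tfrac{\U_4}{16\U_2^2}\bigr)$-contributions to $\gamma_N$ and to $z_\lambda^n\gamma_D$ cancel, and a direct computation of the surviving first- and second-derivative pieces using $(z^n e^{f_m})'(z_\lambda)=e^{f_m(z_\lambda)}[nz_\lambda^{n-1}+z_\lambda^n f_m'(z_\lambda)]$ and the analogous second derivative collapses the result to $-\tfrac{nz_\lambda^{n-2}}{4\U_2}\bigl[z_\lambda\U_3/\U_2-n+1-2z_\lambda f_m'(z_\lambda)\bigr]\varepsilon$, as claimed.
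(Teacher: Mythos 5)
The paper does not prove this statement at all: it is imported verbatim as \cite[Lemma A.3]{HerrmannTugaut10} and used as a black box in the proof of Theorem \ref{theo: invariant}, so there is no in-paper proof to compare against. Your reconstruction is the classical Laplace-method argument, and the bookkeeping checks out: with the scaling $z=z_\lambda+\sqrt{\varepsilon}\,y$ the Gaussian moments $\int y^{2}e^{-\U_2y^2}dy=\sqrt{\pi/\U_2}/(2\U_2)$, $\int y^{4}e^{-\U_2y^2}dy=3\sqrt{\pi/\U_2}/(4\U_2^2)$, $\int y^{6}e^{-\U_2y^2}dy=15\sqrt{\pi/\U_2}/(8\U_2^3)$ turn your $O(\varepsilon)$ terms into exactly $f_m(z_\lambda)\big(\tfrac{5\U_3^2}{48\U_2^3}-\tfrac{\U_4}{16\U_2^2}\big)-f_m'(z_\lambda)\tfrac{\U_3}{4\U_2^2}+\tfrac{f_m''(z_\lambda)}{4\U_2}$, i.e.\ $\gamma_0(\lambda)$; the uniformity discussion via compactness of $\mathcal{I}\times\mathcal{M}$, continuity of $\lambda\mapsto z_\lambda$ and $\inf_\lambda\U_2>0$ is the right mechanism for the $o^{(1)}_{\mathcal{I}\mathcal{M}}(\varepsilon)$ qualifier; and the algebra for the ratio formula is correct, since $g_N'(z_\lambda)-z_\lambda^n g_D'(z_\lambda)=nz_\lambda^{n-1}e^{f_m(z_\lambda)}$ and $g_N''(z_\lambda)-z_\lambda^n g_D''(z_\lambda)=\big(n(n-1)z_\lambda^{n-2}+2nz_\lambda^{n-1}f_m'(z_\lambda)\big)e^{f_m(z_\lambda)}$ collapse the correction to $-\tfrac{nz_\lambda^{n-2}}{4\U_2}\big[z_\lambda\U_3/\U_2-n+1-2z_\lambda f_m'(z_\lambda)\big]\varepsilon$ after the $\big(\tfrac{5\U_3^2}{48\U_2^3}-\tfrac{\U_4}{16\U_2^2}\big)$ terms cancel.

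Two small points deserve tightening. First, in the localization step the exterior contribution must be shown small \emph{relative to} $\sqrt{\varepsilon}\,e^{-2U_\lambda(z_\lambda)/\varepsilon}$, so you need a uniform gap $\inf_{|z-z_\lambda|\ge\delta}U_\lambda(z)-U_\lambda(z_\lambda)\ge c>0$ also in the far region $|z|>R$; this follows from $U_\lambda(z)\ge z^2$ together with $\sup_{\lambda\in\mathcal{I}}U_\lambda(z_\lambda)<\infty$, but it should be said, since writing the bound as $\exp[(\theta-2/\varepsilon)U_\lambda(z)]$ alone does not yet compare the tail to the peak value. Second, in the ratio part you claim admissibility of $e^{f_m}$ and $z^ne^{f_m}$ merely from the $C^3$ regularity; the growth hypothesis $|f_m|\le\exp[\theta|U_\lambda|]$ does \emph{not} transfer to $e^{f_m}$ (it gives only a doubly exponential bound), so strictly speaking one must either assume, as the application in Section \ref{sec: invariant} guarantees (there $f_m$ is affine and $U_\lambda$ grows quadratically), that $e^{f_m(z)}e^{-2U_\lambda(z)/\varepsilon}$ has uniformly negligible tails, or rerun the localization argument directly for these integrands. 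Neither point changes the structure of your proof, but both are exactly where the uniform-in-$(\lambda,m)$ claim could otherwise leak.
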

We are now ready to prove Theorem \ref{theo: invariant}.
\begin{proof}[Proof of Theorem \ref{theo: invariant}]

We recall that $\rho>0$ is defined such that
\begin{equation}
\label{eq: rho}
\rho\geq \max\Big\{\frac{|V_1^{(3)}(m^\ast)|}{4V_1''(m^\ast)(V_1''(m^\ast)+a\alpha_{11}+(1-a)\alpha_{12})},\frac{|V_2^{(3)}(m^\ast)|}{4V_2''(m^\ast)(V_2''(m^\ast)+a\alpha_{21}+(1-a)\alpha_{22})} \Big\}.
\end{equation} 
We define
\[
D(\sigma):=[m^\ast-\rho\sigma^2,m^\ast+\rho\sigma^2]\times[m^\ast-\rho\sigma^2,m^\ast+\rho\sigma^2].
\]
Let $(m_1,m_2)\in D(\sigma)$. Then there exist $\rho_1,\rho_2$ with $0\leq |\rho_1|,\rho_2|\leq \rho$ such that
\[
m_i=m^\ast+\rho_i \sigma^2,~ i=1,2.
\]
We have
\begin{align*}
&\Phi_1(m_1,m_2)
\\&=\frac{\int x \exp\Big(-\frac{2}{\sigma^2}\big(V_1(x)+a\frac{\alpha_{11}}{2}x^2-a\alpha_{11} (m^\ast+\rho_1\sigma^2) x+(1-a)\frac{\alpha_{12}}{2}x^2-(1-a)\alpha_{12} (m^\ast+\rho_2\sigma^2) x\big)\Big)\,dx}{\int \exp\Big(-\frac{2}{\sigma^2}\big(V_1(x)+a\frac{\alpha_{11}}{2}x^2-a\alpha_{11} (m^\ast+\rho_1\sigma^2) x+(1-a)\frac{\alpha_{12}}{2}x^2-(1-a)\alpha_{12} (m^\ast+\rho_2\sigma^2) x\big)\Big)\,dx}
\\&=\frac{\int x e^{2 a\alpha_{11}\rho_1 x+2(1-a)\alpha_{12}\rho_2 x} \exp\Big(-\frac{2}{\sigma^2}\big(V_1(x)+a\frac{\alpha_{11}}{2}x^2-a\alpha_{11} m^\ast x+(1-a)\frac{\alpha_{12}}{2}x^2-(1-a)\alpha_{12} m^\ast x\big)\Big)\,dx}{\int  e^{2 a\alpha_{11}\rho_1 x+2(1-a)\alpha_{12}\rho_2 x} \exp\Big(-\frac{2}{\sigma^2}\big(V_1(x)+a\frac{\alpha_{11}}{2}x^2-a\alpha_{11} m^\ast x+(1-a)\frac{\alpha_{12}}{2}x^2-(1-a)\alpha_{12} m^\ast x\big)\Big)\,dx}.
\end{align*}
Set $U(x)=V_1(x)+a\frac{\alpha_{11}}{2}x^2-a\alpha_{11} m^\ast x+(1-a)\frac{\alpha_{12}}{2}x^2-(1-a)\alpha_{12} m^\ast x$. We have
\begin{align*}
U'(x)&=V_1'(x)+(a\alpha_{11}+(1-a)\alpha_{12})(x-m^\ast),
\\ U''(x)&=V_1''(x)+a\alpha_{11}+(1-a)\alpha_{12},
\\ U^{(3)}(x)&=V_1^{(3)}(x).
\end{align*}
Since $U'(m^\ast)=V_1'(m^\ast)=0, U''(m^\ast)=V_1''(m^\ast)>0$, $m^\ast$ is the unique minimizer of $U$. Applying Lemma \ref{lem: HT10} for $f(x)= 2 a\alpha_{11}\rho_1 x+2(1-a)\alpha_{12}\rho_2 x, n=1, U(x)$ and $\lambda=0$, we get
\begin{align*}
\Phi_1(m^\ast+\rho_1\sigma^2,m^\ast+\rho_2\sigma^2)&=m^\ast-\frac{1}{4m^\ast \U_2}\Big[m^\ast\frac{\U_3}{\U_2}-2m^\ast f'(m^\ast)\Big]\,\sigma^2+o(\sigma^2)
\\&=m^\ast-\bigg[\frac{V_1^{(3)}(m^\ast)}{4(V_1''(m^\ast)+\tau)^2}+\frac{\zeta}{V_1''(m^\ast)+\tau}\bigg]\,\sigma^2+o(\sigma^2)
\\&=: m^\ast-k_1\sigma^2+o(\sigma^2),
\end{align*}
where
\[
\tau:=a\alpha_{11}+(1-a)\alpha_{12},\quad \zeta:=a\alpha_{11}\rho_1+(1-a)\alpha_{12}\rho_2.
\]
We have 
\begin{align*}
|k_1|&\leq \frac{|V_1^{(3)}|}{4(V_1''(m^\ast)+\tau)^2}+\frac{a\alpha_{11}|\rho_1|+(1-a)\alpha_{12}|\rho_2|}{V_1''(m^\ast)+\tau}
\\&\leq \frac{|V_1^{(3)}|}{4(V_1''(m^\ast)+\tau)^2}+\frac{a\alpha_{11}\rho+(1-a)\alpha_{12}\rho}{V_1''(m^\ast)+\tau}
\\&=\frac{|V_1^{(3)}|}{4(V_1''(m^\ast)+\tau)^2}+\frac{\tau \rho}{V_1''(m^\ast)+\tau} 
\\&\overset{\eqref{eq: rho}}{\leq} \rho.
\end{align*}
Similarly we have
\[
\Phi_2(m^\ast+\rho_1\sigma^2,m^\ast+\rho_2\sigma^2)=m^\ast-k_2\sigma^2+o(\sigma^2)\quad\text{where}\quad |k_2|\leq \rho.
\]
Thus for $\sigma$ small enough, we have $\Phi(m^\ast+\rho_1\sigma^2,m^\ast+\rho_2\sigma^2)\in D(\sigma)$. By Brouwer's fixed-point theorem, there exist $(m_1,m_2)\in D(\sigma)$ that satisfy \eqref{eq: m-system}, thus the measures $\mu$ and $\nu$ defined in \eqref{eq: eqn for mu and nu} are invariant measures for the coupled MV-equations. 
\end{proof}
\begin{remark}
Assumption \ref{asspt: common minizer} has been used to obtain that the two functions
\begin{align*}
U(x)&=V_1(x)+a\frac{\alpha_{11}}{2}x^2-a\alpha_{11} m^\ast x+(1-a)\frac{\alpha_{12}}{2}x^2-(1-a)\alpha_{12} m^\ast x,
\\ \hat{U}(x)&=V_2(x)+a\frac{\alpha_{21}}{2}x^2-a\alpha_{21} m^\ast x+(1-a)\frac{\alpha_{22}}{2}x^2-(1-a)\alpha_{22} m^\ast x,
\end{align*}
have the common unique minimizer $m^\ast$ which is also the minimizer of $V_1$ and $V_2$. We expect that this assumption can be removed. To this end, one would need to find a solution $(m_1^\ast,m_2^\ast)$ to the following system
\begin{align*}
V_1'(m_1)+(1-a)\alpha_{12}(m_1-m_2)&=0,
\\V_2'(m_2)+a\alpha_{21}(m_1-m_2)&=0.
\end{align*}
Then one apply Brouwer's fixed-point theorem for $D(\sigma)=[m_1^\ast+\rho_1\sigma^2,m_2^\ast+\rho_2\sigma^2]$ where $0\leq |\rho_1|,|\rho_2|\leq \rho$ with a suitable choice of $\rho$.
\end{remark}
\bibliographystyle{alpha}
\bibliography{ref}
\end{document}